\documentclass[reqno]{amsart}
\usepackage{hyperref}
\usepackage{amssymb,amsmath,amsthm, enumerate}
\usepackage{xcolor}
\usepackage[pdftex]{graphics}
\newcommand{\R}{\mathbb{R}}
\newcommand{\Z}{\mathbb{Z}}
\newcommand{\N}{\mathbb{N}}

\def\R{{\mathbb{R}}}

\def\N{{\mathbb{N}}}
\def\Z{{\mathbb{Z}}}
\def\F{{\mathbb{F}}}

\providecommand{\keywords}[1]
{
  \small	
  \textbf{\textit{Keywords---}} #1
}

\begin{document}

\title[\hfilneg Multiplicity results for  fractional $p-$Laplacian ]{Multiplicity results for Schrödinger type fractional $p$-Laplacian boundary value problems}



\author{Emer Lopera}
\address{Universidad Nacional de Colombia-Manizales, Colombia}
\curraddr{}
\email{edloperar@unal.edu.co}

\author{Leandro Rec\^{o}va}
\address{California State Polytechnic University, Pomona - USA.}
\curraddr{}
\email{llrecova@cpp.edu}

\author{Adolfo Rumbos}
\address{ Pomona College, Claremont, California - USA.}
\curraddr{610 N. College Avenue, 91711 Claremont, CA}
\email{arumbos@pomona.edu}
\keywords{Mountain Pass Theorem, Morse Theory, Critical Groups, Comparison Principle}

\subjclass[2010]{Primary 35J20 }

\date{}

\dedicatory{}


\begin{abstract}
  In this work, we study the existence and multiplicity of solutions for the following problem
\begin{equation}\label{probaa1}
   \left\{ \begin{aligned}
    -(\Delta)_{p}^{s} u  + V(x)|u|^{p-2}u &= \lambda f(u),&x\in\Omega;\\
    u&=0,&x\in \R^{N}\backslash\Omega,
    \end{aligned}
    \right.
\end{equation}
where $\Omega\subset\R^{N}$ is an open bounded set with Lipschitz boundary $\partial\Omega$, $N\geqslant 2,$ $V\in L^{\infty}(\R^{N})$, and $(-\Delta)_p^s$ denotes the fractional $p$-Laplacian with $s\in(0,1), 1<p$, $sp<N$, $\lambda>0$, and $f:\R\rightarrow\R$ is a continuous function. We extend the results of Lopera {\it et al.} in \cite{Lopera1} by  proving the existence of a second weak solution for problem (\ref{probaa1}). We apply a variant of the mountain-pass theorem due to Hofer \cite{Hofer2} and infinite-dimensional Morse theory to obtain the existence of at least two solutions.  
\end{abstract}
\numberwithin{equation}{section}
\newtheorem{theorem}{Theorem}[section]
\newtheorem{lemma}[theorem]{Lemma}
\newtheorem{definition}[theorem]{Definition}
\newtheorem{proposition}[theorem]{Proposition}
\newtheorem{prop}[theorem]{Proposition}
\newtheorem{corollary}[theorem]{Corollary}
\newtheorem{remark}[theorem]{Remark}
\allowdisplaybreaks
\maketitle

\section{Introduction}

Let $\Omega$ be an open bounded set in $\R^{N}$, $N\geqslant 2$, with Lipschitz boundary $\partial\Omega$. In this work, we study the existence and multiplicity of solutions for the problem 
\begin{equation}\label{proba1}
   \left\{ \begin{aligned}
    -(\Delta)_{p}^{s} u + V(x)|u|^{p-2}u &= \lambda f(u),&x\in\Omega;\\
    u&=0,&x\in \R^{N}\backslash\Omega,
    \end{aligned}
    \right.
\end{equation}
where $V\in L^{\infty}(\R^{N})$, $f:\R\rightarrow\R$ is a continuous function and $(-\Delta)_{p}^{s}$ denotes the fractional $p$-Laplacian defined by
\begin{equation}(-\Delta)_{p}^{s}u=2\lim_{\varepsilon\rightarrow 0^{+}}\int_{|x-y|>\varepsilon}\frac{|u(x)-u(y)|^{p-2}(u(x)-u(y))}{|x-y|^{N+sp}}\,dy,
\label{plap}
\end{equation} 
with $s\in(0,1),$ $1<p$, $sp<N$, and $\lambda >0$. 

    As pointed it out by Lindgren and Lidqvist in \cite[page $801$]{LindLind1}, it is not sufficient to prescribe the boundary values only on $\partial\Omega$, but instead, we have to assume that $u=0$ in the whole complement $\R^N\backslash\Omega$ because a change in $u$ done outside $\Omega$ can impact the fractional $p$-Laplacian operator $(-\Delta)_{p}^{s}$. For more details, see Nezza {\it et. al} \cite{Nezza1}, Lindgren {\it et al.} \cite{LindLind1}, and references therein.

In this work, the functions $f$ and $V$ will satisfy the following hypotheses:
\begin{enumerate}
    \item [($H_1$)] Assume that $p-1<q<\displaystyle p_{s}^{*}-1,$ where $p_s^{*}:=\frac{Np}{N-sp}$ is the fractional critical Sobolev exponent, and there exist $A,B>0$ such that 
    \begin{eqnarray}
        A(s^q-1)\leqslant f(s)\leqslant B(s^{q}+1),&\quad\mbox{ for } s>0,\label{e11}\\
        f(s)=0, &\quad\mbox{ for }s\leqslant -1. \label{e12}
        \end{eqnarray}

    \item [($H_2$)] There exist $\theta > p$ and $K\in\R$ such that $f$ satisfies the following Ambrosetti-Rabinowitz type condition
    \begin{equation} sf(s)\geqslant \theta F(s)+K,\quad\mbox{ for all }s\in\R,
        \label{ambcond}
    \end{equation}
    where $F(s)=\int_{0}^{s}f(\xi)\,d\xi$, for $s\in\R$,  is the primitive of $f$. 
    \item [($H_3$)]  $V\in L^{\infty} (\mathbb{R}^N)$ and $V(x)\geqslant -c_V$, for a.e. $x\in \mathbb{R}^N$, where $0<c_V<\lambda_1$ and  $\lambda_1$ is the first eigenvalue of $((-\Delta)_p^s , W_0^{s,p}(\Omega))$. 
    
\end{enumerate}

The first two results establish the existence and multiplicity of solutions for problem (\ref{proba1}) when $f(0)\ne 0.$

\begin{theorem} 
   Assume that $\Omega$ is a bounded domain with a Lipschitz boundary $\partial\Omega$ and the hypotheses $(H1)$-$(H3)$ are satisfied with $f(0)\ne 0$. Then, there exists $\lambda_{0}>0$ such that, for all $\lambda\in (0,\lambda_{0})$, problem (\ref{proba1}) has at least two  solutions. 
    \label{maintheo}
\end{theorem}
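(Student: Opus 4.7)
My plan is to work with the energy functional
$$\Phi_\lambda(u) = \frac{1}{p}\iint_{\R^{2N}}\frac{|u(x)-u(y)|^p}{|x-y|^{N+sp}}\,dx\,dy + \frac{1}{p}\int_\Omega V(x)|u|^p\,dx - \lambda\int_\Omega F(u)\,dx$$
defined on $W_0^{s,p}(\Omega)=\{u\in W^{s,p}(\R^N):u=0 \text{ a.e.\ on }\R^N\setminus\Omega\}$, whose critical points coincide with the weak solutions of \eqref{proba1}. Under $(H_3)$ and the variational characterization of $\lambda_1$, the principal-part terms define a norm equivalent to the Gagliardo seminorm, so $\Phi_\lambda$ is coercive on bounded sets of small norm; under $(H_1)$ the nonlinear term is of class $C^1$ with subcritical growth. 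The first step is to invoke \cite{Lopera1} to produce, for $\lambda\in(0,\lambda_0)$ with $\lambda_0$ small, a first weak solution $u_1$ that is in fact a (strict) local minimizer of $\Phi_\lambda$; the assumption $f(0)\ne 0$ ensures $u_1\not\equiv 0$ since then $\Phi_\lambda'(0)\ne 0$.

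\textbf{Mountain-pass geometry and (PS).} Next I would install a mountain-pass configuration around $u_1$. The Ambrosetti--Rabinowitz inequality $(H_2)$ integrates to $F(s)\geqslant c_1|s|^\theta - c_2$ with $\theta>p$, so for any nontrivial $v\geqslant 0$ in $W_0^{s,p}(\Omega)$ the map $t\mapsto\Phi_\lambda(u_1+tv)$ tends to $-\infty$, producing a point $e$ with $\Phi_\lambda(e)<\Phi_\lambda(u_1)$. Combined with the local-minimizer property of $u_1$, this yields the standard geometric setup. The Palais--Smale condition for $\Phi_\lambda$ follows from $(H_2)$, the compact embedding $W_0^{s,p}(\Omega)\hookrightarrow L^{q+1}(\Omega)$ granted by $(H_1)$, and the $(S_+)$ property of the fractional $p$-Laplacian, by a now-standard argument.

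\textbf{Hofer's theorem and separation via critical groups.} Let $\Gamma$ be the set of continuous paths from $u_1$ to $e$ in $W_0^{s,p}(\Omega)$, and set $c=\inf_{\gamma\in\Gamma}\max_{t\in[0,1]}\Phi_\lambda(\gamma(t))\geqslant\Phi_\lambda(u_1)$. If $c>\Phi_\lambda(u_1)$, the classical deformation lemma gives a critical point $u_2$ at level $c$, automatically distinct from $u_1$. The delicate case is $c=\Phi_\lambda(u_1)$; here I apply Hofer's variant \cite{Hofer2}, which under (PS) still produces a critical point $u_2$ at level $c$ of mountain-pass type, so that $C_1(\Phi_\lambda,u_2)\ne 0$. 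Since the strict local minimizer $u_1$ satisfies $C_k(\Phi_\lambda,u_1)=\delta_{k,0}\,\Z$, the Morse-theoretic invariants force $u_2\ne u_1$. The main obstacle I expect is ensuring that $u_1$ is a local minimizer in the $W_0^{s,p}$ topology, not merely in a weaker one such as $L^\infty(\Omega)$ or $C^0(\overline{\Omega})$: mountain-pass paths live in $W_0^{s,p}(\Omega)$, so a fractional $p$-Laplacian analogue of the Brezis--Nirenberg $C^1$-versus-$H^1$ local-minimum principle is required, which in turn rests on boundary regularity of weak solutions and is substantially more delicate in the nonlocal quasilinear setting.
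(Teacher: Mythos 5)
Your overall architecture (a local minimizer plus a mountain-pass point around it, separated by critical groups) is a legitimate classical route, but it is not the paper's route, and as written it rests on a step that is not established. The paper produces a single critical point $u_\lambda$ of mountain-pass type by applying Hofer's theorem to the pair $e_0=0$, $e_1=c\lambda^{-r}\varphi_o$ (Lemmas \ref{lema2} and \ref{lema1} give $J_\lambda\geqslant c_1(\tau\lambda^{-r})^p$ on the sphere $\|u\|=\tau\lambda^{-r}$ and $J_\lambda(e_1)\leqslant 0$), so that $C_1(J_\lambda,u_\lambda)\not\cong 0$ by Proposition \ref{propMMP}; it then computes $C_k(J_\lambda,\infty)\cong\delta_{k,0}\F$ in Section \ref{secex} and invokes Bartsch--Li (Proposition \ref{bliprop}) to rule out $\mathcal{K}=\{u_\lambda\}$. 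No local minimizer is ever produced or needed.

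The gap in your version: you assert that \cite{Lopera1} supplies a first solution $u_1$ that is a \emph{strict local minimizer}. It does not --- the solution of Lopera et al.\ (and of \cite{CastroFigLop}) is itself obtained from the Ambrosetti--Rabinowitz mountain-pass theorem, exactly as this paper recalls in its introduction, and nothing in $(H_1)$--$(H_3)$ with $f(0)\ne 0$ makes that solution a local minimizer. Everything downstream --- the mountain-pass geometry ``around $u_1$'', the computation $C_k(\Phi_\lambda,u_1)=\delta_{k,0}$, and hence the separation $u_2\ne u_1$ --- depends on this unproved claim. The claim is repairable: since $J_\lambda(0)=0$ while $J_\lambda\geqslant c_1(\tau\lambda^{-r})^p>0$ on the sphere $\|u\|=\tau\lambda^{-r}$, and $J_\lambda$ is sequentially weakly lower semicontinuous (the $V$- and $F$-terms are weakly continuous by the compact subcritical embeddings), the infimum of $J_\lambda$ over the closed ball is attained at an interior point; this is a nontrivial critical point with nonpositive energy, hence automatically distinct from the mountain-pass point, whose energy is at least $c_1\lambda^{-rp}>0$ by (\ref{eslp1}). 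With that construction your worries about the degenerate level $c=\Phi_\lambda(u_1)$ and about $C^1$-versus-$W^{s,p}$ minimizers both evaporate, because the sphere separates the two endpoints at a strictly higher level. Note finally that Hofer's theorem as stated in Theorem \ref{hofmaintheo} requires $d>c$ strictly, so it does not cover the zero-altitude case you propose to handle with it; for that case you would need a Pucci--Serrin or Ghoussoub--Preiss type refinement instead.
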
    

To obtain a positive solution, we need to assume that $p\geqslant 2$ to get enough regularity of solutions up to the boundary of $\Omega$ and $V(x)\geqslant 0$, for a.e $x\in\Omega$. In this case, we obtain the following multiplicity result.

\begin{theorem}\label{maintheo2}
In addition to the hypotheses of Theorem \ref{maintheo}, assume that $V(x)\geqslant 0$ for a.e $x\in\Omega$, $p\geqslant 2$, $\Omega$ is bounded and satisfies the interior ball condition at any $x\in \partial \Omega$, and 
$$p-1<q<\min\left\{\displaystyle\frac{sp}{N}p_s^*,  p_s^*-1\right\}.$$  
Then, there exists $\lambda^* >0$ such that, for all $0<\lambda < \lambda^*$, problem \eqref{proba1} has at least two solutions. Moreover:
\begin{itemize}
    \item[(a)] If $f(0)>0$, then both solutions are positive.
    \item[(b)] If $f(0)<0$, then at least one of the solutions is positive.
\end{itemize}
\end{theorem}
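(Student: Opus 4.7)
The plan is to guarantee nonnegative solutions through a truncation argument and then to promote them to strictly positive ones using regularity combined with a strong maximum principle. The extra hypotheses $V\geq 0$, $p\geq 2$, the interior ball condition and the sharper bound $q<\frac{sp}{N}p_s^{*}$ enter exactly in this second step.

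For part~(a), with $f(0)>0$, I would truncate by setting $\tilde f(s):=f(s_{+})$. This $\tilde f$ is continuous, agrees with $f$ on $[0,\infty)$, and up to an adjustment of the constants still satisfies $(H_{1})$--$(H_{3})$ with $\tilde f(0)>0$. Testing the truncated equation against $u^{-}=\min(u,0)$ and using the monotonicity inequality for the fractional $p$-Laplacian, the sign of $V$, and $\tilde f(u)\,u^{-}=f(0)\,u^{-}\leq 0$ on $\{u<0\}$, one finds $\|u^{-}\|_{W_{0}^{s,p}(\Omega)}^{p}\leq 0$, so every weak solution of the truncated problem is nonnegative. Applying Theorem~\ref{maintheo} to $\tilde f$ therefore produces two weak solutions of~\eqref{proba1} that are both nonnegative and nontrivial (since $\tilde f(0)\neq 0$), and after the positivity step below both are positive.

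For part~(b), with $f(0)<0$, Theorem~\ref{maintheo} already supplies two (possibly sign-changing) solutions of~\eqref{proba1}. To produce at least one positive solution, I would pick $s^{*}>0$ with $f(s^{*})=0$ (it exists by continuity and the fact that $f(s)\to\infty$ by $(H_{1})$), set $\tilde f(s):=\max(f(s),0)$ for $s\geq 0$ and $\tilde f(s):=0$ for $s<0$ (continuous since $\tilde f(0)=0$), and run a mountain-pass argument for the truncated functional to obtain a nontrivial critical point $u$. The $u^{-}$ test yields nonnegativity, and a nonlocal sub--supersolution comparison between $u$ and a suitable positive supersolution, combined with the strong maximum principle below, shows $u\geq s^{*}$ on $\{u>0\}$, so that $\tilde f(u)=f(u)$ a.e.\ and $u$ is a weak solution of~\eqref{proba1}.

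Each nonnegative nontrivial weak solution is then upgraded to a strictly positive one by a Moser-type $L^{\infty}$ iteration (where $q<\frac{sp}{N}p_{s}^{*}$ is used to close the iteration), boundary $C^{s}(\overline{\Omega})$-regularity in the style of Iannizzotto--Mosconi--Squassina (where $p\geq 2$ is needed), and the strong maximum principle together with the Hopf boundary lemma for the fractional $p$-Laplacian of Del~Pezzo--Quaas and Jarohs (where the interior ball condition is used). The threshold $\lambda^{*}$ is the minimum of $\lambda_{0}$ from Theorem~\ref{maintheo} and any smallness required to keep the mountain-pass geometry of the truncated functional intact. The principal obstacle is the last step of part~(b): showing that the truncation-based mountain-pass critical point is in fact a weak solution of the original problem, for this requires a careful nonlocal comparison that exploits the specific form of $\tilde f$ together with the strong maximum principle, and it is the only place in the argument where the variational machinery alone does not suffice.
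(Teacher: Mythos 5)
There are two genuine gaps. First, in part (a) your truncation $\tilde f(s):=f(s_+)$ does \emph{not} satisfy $(H_1)$: condition (\ref{e12}) requires $f\equiv 0$ on $(-\infty,-1]$, whereas $\tilde f\equiv f(0)>0$ there, and no ``adjustment of constants'' repairs this. This matters because (\ref{e12}) is load-bearing in the results you want to quote: it is what makes $\tilde F$ and $s\mapsto s\tilde f(s)$ bounded below, which the paper uses in the Palais--Smale argument, in Lemma \ref{linfty-estimate} (via $\min F>-\infty$), and above all in the computation $C_k(J_\lambda,\infty)\cong\delta_{k,0}\mathbb{F}$, which needs $J_\lambda(tu)\to-\infty$ along \emph{every} ray $u\in S^\infty$. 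With your $\tilde f$ one has $\tilde F(s)=f(0)s\to-\infty$ as $s\to-\infty$, so for $u\leqslant 0$ the truncated functional satisfies $J_\lambda(tu)\to+\infty$, the deformation onto $S^\infty$ fails, and Theorem \ref{maintheo} cannot be invoked for the truncated problem as you claim. The gap is repairable (interpolate $\tilde f$ linearly from $f(0)$ at $s=0$ down to $0$ at $s=-1$; the $u^-$ test still gives $\tilde f(u)u^-\leqslant 0$ on $\{u<0\}$), but as written the step ``$\tilde f$ still satisfies $(H_1)$--$(H_3)$'' is false.

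Second, and more seriously, part (b) rests on showing that the critical point of the functional truncated at a zero $s^*$ of $f$ satisfies $u\geqslant s^*$ on $\{u>0\}$, so that $\tilde f(u)=f(u)$. This cannot hold: the solution lies in $W_0^{s,p}(\Omega)$ and (by the boundary regularity you yourself invoke) behaves like $d(x)^s$ near $\partial\Omega$, so $0<u<s^*$ on a neighborhood of the boundary where $f(u)<0\neq\tilde f(u)$; no sub--supersolution comparison can circumvent this, which is precisely why semipositone problems are delicate. The paper takes an entirely different route (Theorem \ref{theo56}): it proves positivity of the mountain-pass solution $u_\lambda$ by a rescaling argument as $\lambda\to 0$, using the a priori bounds $C_1\lambda^{-r}\leqslant\|u_\lambda\|_\infty\leqslant C_2\lambda^{-r}$ from Lemmas \ref{linfty-estimate} and \ref{L_inf_bound-above} to show that $w_\lambda=u_\lambda/\|u_\lambda\|_\infty$ solves an equation whose negative forcing $K_\lambda=\lambda\|u_\lambda\|_\infty^{1-p}\min f\to 0$, then passing to a $C^\beta(\overline{\Omega})$ limit and applying the comparison principle (Theorem \ref{comparison-extended}) together with the strong maximum principle and Hopf lemma (Theorems \ref{t4.2}, \ref{t4.3}). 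Note that this is where the smallness of $\lambda^*$ is actually consumed; your proposal never uses $\lambda$ small for positivity, which is a symptom of the missing idea. You should replace the part (b) truncation scheme by (or reduce it to) this blow-up/comparison argument.
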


\begin{remark}
    {\rm
     Observe that statement (b) encompasses the semipositone case. See, for example, Castro {\it et al.} \cite{CastroFigLop} and references therein.
    }
\end{remark}

For the case in which the function $u\equiv 0$  is a solution of problem (\ref{proba1}), called the trivial solution, to obtain a multiplicity result in this case, we need an additional condition on the primitive of $f$.

\begin{theorem}
    Assume that $\Omega$ is a bounded domain with a Lipschitz boundary $\partial\Omega$ and the hypotheses $(H1)$-$(H3)$ are satisfied. Moreover, assume that $f(0)=0$ and
     \begin{equation}
        \limsup_{s\rightarrow 0} \frac{F(s)}{|s|^{p}}=0.
        \nonumber 
    \end{equation}
    Then, there exists $\lambda_{0}>0$ such that, for all $\lambda\in (0,\lambda_{0})$, problem (\ref{proba1}) has at least two nontrivial solutions. 
\label{maintheo33}
\end{theorem}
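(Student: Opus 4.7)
The variational framework places
\[
J_\lambda(u):=\frac{1}{p}[u]_{s,p}^p+\frac{1}{p}\int_\Omega V(x)|u|^p\,dx-\lambda\int_\Omega F(u)\,dx
\]
on $X:=W_0^{s,p}(\Omega)$; the functional is $C^1$, and $f(0)=0$ makes $u\equiv 0$ an (unwanted) critical point, so the task is to produce two \emph{additional} critical points. The plan is to run the by-now-familiar ``strict local minimum $+$ mountain pass $+$ Morse-theoretic obstruction'' scheme, with $0$ playing the role of the strict local minimum.

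First I would show that $u\equiv 0$ is a strict local minimum and compute its critical groups. The new hypothesis $\limsup_{s\to 0}F(s)/|s|^p=0$ combined with the subcritical growth in $(H_1)$ yields, for every $\varepsilon>0$, a constant $C_\varepsilon>0$ with $F(s)\leq\varepsilon|s|^p+C_\varepsilon|s|^{q+1}$; using $(H_3)$ and the variational characterization $\lambda_1\|u\|_p^p\leq[u]_{s,p}^p$ one obtains
\[
J_\lambda(u)\;\geq\;\frac{1}{p}\Bigl(1-\frac{c_V}{\lambda_1}-\frac{p\lambda\varepsilon}{\lambda_1}\Bigr)[u]_{s,p}^p\;-\;C\lambda\,[u]_{s,p}^{q+1},
\]
which is strictly positive for small $\varepsilon$ and small $[u]_{s,p}$. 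Hence $0$ is an isolated strict local minimum with $C_k(J_\lambda,0)=\delta_{k,0}\,\F$. On the other hand, $(H_2)$ delivers $F(s)\geq c_1|s|^\theta-c_2$ with $\theta>p$, so for any fixed $u_0\in X\setminus\{0\}$ one has $J_\lambda(tu_0)\to-\infty$, producing $e\in X$ with $[e]_{s,p}>\rho$ and $J_\lambda(e)<0$. The Palais--Smale condition follows from $(H_2)$ (boundedness of (PS) sequences) together with the compact embedding $W_0^{s,p}(\Omega)\hookrightarrow L^{q+1}(\Omega)$. Hofer's variant of the mountain pass theorem then produces $u_1\neq 0$ at the MP level $c_\ast>0$ with $C_1(J_\lambda,u_1)\neq 0$; in particular $u_1$ is not a local minimum.

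For the second nontrivial solution I would appeal to infinite-dimensional Morse theory. The Ambrosetti--Rabinowitz condition lets one deformation-retract $\{J_\lambda\leq-a\}$ onto the unit sphere $S$ of $X$ for $a$ large, and since $X$ is infinite-dimensional $S$ is contractible, so $C_k(J_\lambda,\infty)=H_k(X,S)=0$ for every $k$. Assuming for contradiction that $\{0,u_1\}$ is the entire critical set, the Morse relation
\[
\sum_k(-1)^k\dim C_k(J_\lambda,0)+\sum_k(-1)^k\dim C_k(J_\lambda,u_1)\;=\;\sum_k(-1)^k\dim C_k(J_\lambda,\infty)\;=\;0,
\]
together with the stronger Morse inequalities and the refined information on $C_\ast(J_\lambda,u_1)$ supplied by Hofer's theorem, ought to be inconsistent with the asymmetry \eqref{e12} in $f$---which makes $J_\lambda$ behave very differently in the ``very negative'' and ``very positive'' directions and thus creates a second potential basin---forcing a third critical point $u_2\notin\{0,u_1\}$.

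The main obstacle I anticipate is precisely this last Morse cancellation step. The minimal data $C_0(J_\lambda,0)=\F$, $C_1(J_\lambda,u_1)\neq 0$, and $C_\ast(J_\lambda,\infty)=0$ are by themselves compatible with the critical set being exactly $\{0,u_1\}$ (take $C_\ast(J_\lambda,u_1)=\delta_{\ast,1}\,\F$, which makes the Morse identity hold). To rule this out one has to extract more structure, either from Hofer's construction (for instance, showing that $C_k(J_\lambda,u_1)$ vanishes for $k\neq 1$ and then contradicting the Morse inequalities at some higher degree) or from \eqref{e12} by building a truncated coercive functional whose global minimum supplies a second nontrivial solution directly. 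Adapting the critical-group computations for the fractional $p$-Laplacian developed in \cite{Lopera1} to the present $f(0)=0$ regime is what I expect to be the technical heart of the argument.
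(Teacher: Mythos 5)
Your setup is the same as the paper's: the origin is shown to be a strict local minimizer with $C_k(J_\lambda,0)\cong\delta_{k,0}\F$ via the estimate $F(s)\leqslant\varepsilon|s|^p+C_\varepsilon|s|^{q+1}$ and $(H_3)$; Hofer's theorem supplies $u_1$ of mountain-pass type with $C_1(J_\lambda,u_1)\not\cong 0$; and the sublevel set $J_\lambda^{-M}$ deformation-retracts onto the contractible unit sphere, so the relevant relative homology vanishes in degree one. The genuine gap is the one you name yourself: you never derive the final contradiction. As you correctly observe, the alternating-sum Morse relation and the weak Morse inequalities are all satisfied by the data $C_*(J_\lambda,0)\cong\delta_{*,0}\F$, $C_*(J_\lambda,u_1)\cong\delta_{*,1}\F$, $C_*(J_\lambda,\infty)\cong 0$, so nothing you have established excludes $\mathcal{K}=\{0,u_1\}$. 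A proof must supply this step, and your proposal only sketches two possible repairs without carrying either one out; as written it does not prove the theorem.

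For comparison, the paper closes the argument by assuming $\mathcal{K}=\{0,u_\lambda\}$ and invoking \cite[Theorem 4.2, p.~35]{KC} to assert the degree-by-degree splitting $H_k(X,J_\lambda^{-M})\cong C_k(J_\lambda,0)\oplus C_k(J_\lambda,u_\lambda)$; taking $k=1$ then yields $0\cong C_1(J_\lambda,u_\lambda)$, contradicting the mountain-pass property. Be aware that this splitting is strictly stronger than the Morse inequalities: it requires the connecting homomorphisms in the long exact sequence of the triple $(X,J_\lambda^{b},J_\lambda^{-M})$, with $J_\lambda(0)<b<J_\lambda(u_\lambda)$, to vanish, whereas in the two-critical-point configuration that sequence reads $0\to C_1(J_\lambda,u_\lambda)\to C_0(J_\lambda,0)\to 0$ and by itself only forces $C_1(J_\lambda,u_\lambda)\cong\F$. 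So the self-consistency you worry about is precisely the point of tension, and your diagnosis is sound; to complete the proof you must either justify the direct-sum decomposition in this setting (the route the paper takes via \cite{KC}) or implement one of your alternatives, for instance producing a second nontrivial solution directly as a global minimizer of a truncated coercive functional built from the condition that $f$ vanishes on $(-\infty,-1]$.
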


Problems involving the fractional $p$-Laplacian have been an object of intensive research in the last years in many branches of science such as in phase transition phenomena, population dynamics, and game theory (see \cite{Caf1}, \cite{Ser1}, \cite{Nezza1}, \cite{IannihS}, \cite{global},\cite{hopf},\cite{LindLind1}, \cite{AssoCuesta1}, \cite{fine}, \cite{Lopera1},   \cite{PerYang}) and \cite{IanniPerSqua1}.  In \cite{Vald1}, Valdinoci presents a self-contained exposition on how a simple random walk with possibly long jumps is related to the fractional $p-$Laplacian operator.  For more insights on the applications, we refer to Iannizzotto {\it et al.}\cite{IanniPerSqua1} and Caffarelli \cite{Caf1} where the authors provide 
 a detailed review of current applications and challenges faced when dealing with these nonlocal operators.  

  This paper was motivated by the results obtained by Castro {\it et al.} in \cite{CastroFigLop} for the case of the $p$-Laplacian operator and by Lopera et. al in \cite{Lopera1} for the fractional $p$-Laplacian. In those articles, the authors proved the existence of a positive solution for problem (\ref{proba1}) when the potential 
  $V\equiv 0$. The existence result was obtained by showing that the associated energy functional for problem (\ref{proba1}) had the geometry of the mountain-pass theorem of Ambrosetti-Rabinowitz \cite{AmbRab}. They also proved that the solution was positive by using some new regularity results and Hopf's Lemma.  
 
 The main goal of this work is to extend the results of Lopera {\it et al.} in \cite{Lopera1} by proving the existence of at least two solutions for problem (\ref{proba1}). We will use a variant of the mountain-pass theorem due to Hofer \cite{Hofer2} and infinite-dimensional Morse theory to obtain the existence of a second solution for both cases where $f(0)\ne 0$ and $f(0)=0,$ respectively.

This paper is organized as follows: In Section \ref{prel} we present some preliminary results that will be used throughout this work. In Section \ref{firstex}, we prove that the associated energy functional to problem (\ref{proba1}) has a critical point $u_\lambda$ of mountain-pass type. In Section \ref{secex}, we apply infinite-dimensional Morse theory to compute the critical groups of the associated energy functional at infinity. In Section \ref{SecCriticalGroups}, we compute the critical groups of the associated energy functional for problem (\ref{proba1} at the origin. Finally, we prove the existence and multiplicity results in Section \ref{secproofmainres}.

\section{Preliminaries}\label{prel}

In this work, we will use a variational approach to study the existence and multiplicity of solutions for problem (\ref{proba1}). We start with some notation and preliminary results that will be used throughout this article. 

Let $\Omega$ be an open bounded subset of in $\R^{N}$, $N\geqslant 2$, with boundary $\partial\Omega$.   Denote by $C(\overline{\Omega})$ the set of continuous functions on $\overline\Omega$. The space of $\gamma$-H\"{o}lder continuous functions is defined by
$$C^{\gamma}(\overline{\Omega})=\{u\in C(\overline{\Omega}):[u]_{C^\gamma(\overline{\Omega})}<\infty\},$$
where $0<\gamma \leqslant 1$ and 
$$[u]_{C^\gamma(\overline{\Omega})}=\sup_{x,y\in\overline{\Omega},x\ne y}\frac{|u(x)-u(y)|}{|x-y|^\gamma}.$$
The space $C^\gamma(\overline{\Omega})$ is a Banach space endowed with the norm 
$$\|u\|_{C^\gamma(\overline{\Omega})} = \|u\|_{L^\infty(\Omega)}+[u]_{C^\gamma(\overline{\Omega})}.$$
 

In some of the regularity results that will be used in this article, it will be required that the domain $\Omega\subset\R^N$, $N\geqslant 2$, be a Lipschitz domain. This is the content of the next definition. 

\begin{definition}
{\rm  
 We will say that $\Omega\subset\R^{N}$ has a Lipschitz boundary, and call it a Lipschitz domain, if, for every $x_0\in\partial\Omega$, there exists $r>0$ and a map $h:B_{r}(x_0)\rightarrow B_{1}(0)$ such that
 \begin{enumerate}
     \item [(i)] $h$ is a bijection;
     \item [(ii)] $h$ and $h^{-1}$ are both Lipschitz continuous functions;
     \item [(iii)] $h(\partial\Omega\cap B_{r}(x_0))=Q_0$;
     \item [(iv)] $h(\Omega\cap B_{r}(x_0))=Q_{+}$,
 \end{enumerate}
where $B_{r}(x_0)$ denotes the $n-$dimensional open ball of radius $r$ and center at $x_0\in\partial\Omega$, and 
$$Q_{0}:=\{(x_1,\ldots,x_n)\in B_{1}(0)\mbox{ }| x_n=0\}\mbox{ and } Q_+:=\{(x_1,\ldots,x_n)\in B_1(0)\mbox{ }|x_n > 0\}.$$
}
\end{definition} 

Next, we introduce the space of functions where the associated energy functionals will be defined. Let  $s\in(0,1)$ and $1\leqslant  p < \infty$, and denote by 
\begin{equation}
    W_{0}^{s,p}(\Omega)=\{u\in W^{s,p}(\R^{N}):u=0\mbox{ a.e in }\R^{N}\backslash\Omega\}
    \label{sob0}
\end{equation}
the subset of the following fractional Sobolev space $W^{s,p}(\R^{N})$:
\begin{equation}
    W^{s,p}(\R^{N})=\left\{u\in L^{p}(\R^N):\int_{\R^{2N}}\frac{|u(x)-u(y)|^{p}}{|x-y|^{N+sp}}\,dx\,dy < \infty\right\},
    \nonumber 
\end{equation}
endowed with the norm 
\begin{equation}
    \|u\|_{s,p} := \left(\|u\|_{p}^{p}+[u]_{s,p}^{p}\right)^{1/p},
    \label{defnorm}
\end{equation}
where $\|\cdot \|_{p}$ denotes the norm in $L^{p}(\Omega)$ for $1\leqslant p < \infty$ and 
\begin{equation}
    [u]_{s,p} := \int_{\R^{2N}}\frac{|u(x)-u(y)|^{p}}{|x-y|^{N+sp}}\,dx\,dy,
    \label{gagnorm}
\end{equation}
is the Gagliardo seminorm. It can be shown that $W^{s,p}(\R^{N})$ endowed with the norm $\|\cdot\|_{s,p}$ is a Banach space  and $W_{0}^{s,p}(\Omega)\subset W^{s,p}(\R^N)$ is a closed subspace. In the case $1<p<\infty$, $W^{s,p}(\Omega)$ is a reflexive Banach space (see Asso {\it et al.} \cite[Section 2.1]{AssoCuesta1}).

By virtue of the Sobolev-type inequality (see \cite[Theorem $6.7$]{Nezza1}), it can be shown that the space $W_{0}^{s,p}(\Omega)$ can be endowed with the norm 
\begin{equation}
    \|u\|:=[u]_{s,p},  
    \label{norm1}
\end{equation}
for $s\in(0,1)$ and $1\leqslant p < \infty$. 

We will also denote by $\widetilde{W}^{s,p}(\Omega)$ the following Sobolev space 
    \begin{equation}
        \left\{u\in L_{loc}^{p}(\R^{N}):\exists U\supset\supset\Omega \mbox{ s.t } \|u\|_{W^{s,p}(U)}+\int_{\R^{N}}\frac{|u(x)|^{p-1}}{(1+|x|)^{N+ps}}\,dx < \infty\right\},
        \nonumber 
    \label{wtildedef}
    \end{equation}
where $\Omega\subset\R^{N}$ is a bounded set (see \cite[Definition $2.1$]{global} for more details). Since $\Omega$ is a bounded set, it follows from \cite[Remark $1.1$]{hopf} that $W_0^{s,p}(\Omega)\subset \widetilde{W}^{s,p}(\Omega)$. We will refer to the space $\widetilde{W}^{s,p}(\Omega)$ during the proof of a comparison principle for problem (\ref{proba1}). 

For more details on fractional Sobolev spaces, see  \cite[Section $2$]{Nezza1}, \cite{Ser2}, and references therein.

In this paper, we shall denote by $X$ the fractional Sobolev space $W_{0}^{s,p}(\Omega)$. 

Define $J_\lambda:X\rightarrow\R$, the energy functional associated with problem (\ref{proba1}), by
\begin{equation}
     J_{\lambda}(u) = \frac{1}{p}\|u\|^{p} + \frac{1}{p}\int_{\Omega}V(x)|u|^p\,dx -\lambda\int_{\Omega}F(u)\,dx,\quad\mbox{ for }u\in X,
     \label{funcdef1}
\end{equation}
and $\lambda > 0$ with $\|\cdot\|$ defined in (\ref{norm1}).  

The functional $J_\lambda$ is well-defined and $J_\lambda\in C^1(X,\R)$. It can be shown that the Fr\'{e}chet derivative of $J_\lambda$ is given by 
 \begin{equation}\label{frechet1}
 \begin{aligned} 
    \langle J_{\lambda}^{\prime}(u),\varphi\rangle & = \int_{\R^{2N}}\frac{\Phi_p(u(x)-u(y))(\varphi(x)-\varphi(y))}{|x-y|^{N+sp}}\,dx\,dy \\ & \qquad\quad +  \int_\Omega V(x)|u|^{p-2}u\varphi\,dx - \lambda\int_{\Omega}f(u)\varphi\,dx,
    \end{aligned}
\end{equation}
for all $\varphi\in X$, where $\Phi_p:\R\rightarrow\R$ is given by $\Phi_p(s)=|s|^{p-2}s,$ for $s\in\R$. 

We will say that $u$ is a weak solution of problem (\ref{proba1}) if $u$ is a critical point of $J_\lambda;$ namely, 
\begin{equation}
    \langle J_\lambda^{\prime}(u) ,\varphi\rangle =0,\quad\mbox{ for all }\varphi\in X.
    \label{cpdef}
\end{equation}

For every $1<p_1<p_s^*$, we shall denote by $C_{p_1}$ the optimal constant in the Sobolev embedding theorem; namely, 
\begin{equation} 
\|u\|_{p_1} \leqslant C_{p_1} \|u\|,\quad\mbox{ for all }u\in X,
\label{condd1}
\end{equation}
see \cite[Theorem $6.7$]{Nezza1}.

In the proof of the existence of a solution of a mountain-pass type, we will need the following result due to Lindgren and Lindqvist \cite{LindLind1}. 
\begin{theorem}\cite[Theorem 5]{LindLind1}
There exists a non-negative minimizer $u\in W_{0}^{s,p}(\Omega)$, $u\not\equiv 0$, and $u=0$ in $\R^{N}\backslash\Omega$ of the fractional Rayleigh quotient:
\begin{equation}\label{lambda1Dfn}
    \lambda_1=\inf_{u \in W_{0}^{s,p}(\Omega)\backslash
    \{0\}
}\displaystyle\frac{\displaystyle\int_{\R^{N}}\int_{\R^{N}}\frac{|u(u)-u(x)|^p}{|y-x|^{\alpha p}}\,dx\,dy}{\displaystyle\int_{\R^N}|u(x)|^{p}\,dx}.   
\end{equation}

It satisfies the Euler-Lagrange equation 
\begin{equation}
    \int_{\R^{2N}}\frac{|u(y)-u(x)|^{p-2}(u(y)-u(x))(\varphi(y)-\varphi(x))}{|y-x|^{\alpha p}}\,dx\,dy = \lambda\int_{\R^{N}}|u|^{p-2}u\varphi\,dx,
    \label{EL1}
\end{equation}
with $\lambda=\lambda_1$ whenever $\varphi\in C_{c}^{\infty}(\Omega)$. If $\alpha p> 2N$, the minimizer is in $C^{0,\beta}(\R^{N})$ with $\beta=\alpha -2N/p.$
\label{lindlind1}
\end{theorem}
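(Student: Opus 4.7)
The plan is to apply the direct method of the calculus of variations. First, I would verify that $\lambda_1$ is well-defined and strictly positive: the Sobolev embedding $W_0^{s,p}(\Omega) \hookrightarrow L^p(\Omega)$ (valid because $\Omega$ is bounded) yields a constant $C > 0$ with $\|u\|_p \leqslant C [u]_{s,p}$, so the Rayleigh quotient is bounded below by $C^{-p} > 0$. Take a minimizing sequence $\{u_n\} \subset W_0^{s,p}(\Omega)\setminus\{0\}$, and by homogeneity normalize so that $\|u_n\|_p = 1$ and $[u_n]_{s,p}^p \to \lambda_1$. Then $\{u_n\}$ is bounded in $W_0^{s,p}(\Omega)$.

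Since $W_0^{s,p}(\Omega)$ is reflexive for $1 < p < \infty$, there is a subsequence (still denoted $u_n$) and $u \in W_0^{s,p}(\Omega)$ with $u_n \rightharpoonup u$ weakly. By the compact embedding $W_0^{s,p}(\Omega) \hookrightarrow L^p(\Omega)$ for bounded $\Omega$, $u_n \to u$ strongly in $L^p(\R^N)$, so $\|u\|_p = 1$, in particular $u \not\equiv 0$. The Gagliardo seminorm $[\cdot]_{s,p}$ is the $L^p(\R^{2N})$-norm of the difference quotient $(u(x)-u(y))/|x-y|^{(N+sp)/p}$ and is therefore weakly lower semicontinuous, giving $[u]_{s,p}^p \leqslant \liminf_n [u_n]_{s,p}^p = \lambda_1$. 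Combined with $\|u\|_p = 1$ and the definition of $\lambda_1$, this forces $[u]_{s,p}^p = \lambda_1$, so $u$ is a minimizer. To obtain a non-negative minimizer, I would replace $u$ by $|u|$ and use the pointwise inequality $\bigl||u(x)|-|u(y)|\bigr| \leqslant |u(x)-u(y)|$, which gives $[|u|]_{s,p} \leqslant [u]_{s,p}$ while $\||u|\|_p = \|u\|_p$; thus $|u|$ is also a minimizer.

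The Euler-Lagrange equation \eqref{EL1} follows by the standard variational argument: for $\varphi \in C_c^\infty(\Omega)$ the map $t \mapsto [u+t\varphi]_{s,p}^p / \|u+t\varphi\|_p^p$ is differentiable at $t=0$ and attains its minimum there, so the derivative vanishes, yielding the required identity with $\lambda = \lambda_1$ after an elementary computation using the chain rule for $\Phi_p(z) = |z|^{p-2}z$. Finally, the H\"older regularity claim $u \in C^{0,\beta}(\R^N)$ with $\beta = \alpha - 2N/p$ in the regime $\alpha p > 2N$ is a Morrey-type embedding for fractional Sobolev spaces (where $\alpha$ plays the role of $s$ in the paper's notation).

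The main obstacle in such a proof is the weak lower semicontinuity step, which requires recognizing the Gagliardo seminorm as an $L^p$ norm on $\R^{2N}$ with respect to the singular measure $|x-y|^{-(N+sp)}\,dx\,dy$ and invoking convexity; everything else is either standard Sobolev theory on bounded domains or a direct computation of the first variation.
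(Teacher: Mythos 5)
This statement is quoted verbatim from Lindgren--Lindqvist \cite{LindLind1} and the paper supplies no proof of its own, so there is nothing internal to compare against; your direct-method argument (normalized minimizing sequence, reflexivity of $W_0^{s,p}(\Omega)$, compact embedding into $L^p(\Omega)$, weak lower semicontinuity of the Gagliardo seminorm via convexity, passage to $|u|$ using $\bigl||u(x)|-|u(y)|\bigr|\leqslant|u(x)-u(y)|$, first variation for the Euler--Lagrange equation, and a Morrey-type embedding for the H\"older regularity) is correct and is essentially the proof given in the cited source. The only slip is notational: in the theorem's convention $\alpha p=N+sp$, so $s=\alpha-N/p$ rather than ``$\alpha$ playing the role of $s$''; with that identification $\alpha p>2N$ is exactly the Morrey regime $sp>N$ and $\beta=\alpha-2N/p=s-N/p$ is the usual Morrey exponent, as you intend.
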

Theorem \ref{lindlind1} motivated the following definition:

\begin{definition}\cite[Definition $6$]{LindLind1}
    {\rm
   We say that $u\not\equiv 0$, $u\in W_{0}^{s,p}(\Omega)$, $s=\alpha -n/p$, is an {\it eigenfunction}  of $\Omega$, if the Euler-Lagrange equation (\ref{EL1}) holds for all test functions $\varphi\in C_{c}^{\infty}(\Omega)$. The corresponding $\lambda$ is called an {\it eigenvalue. }
    }
\end{definition}

\begin{remark}
    {\rm 
    The minimizer found in Theorem \ref{lindlind1} is called the {\it first eigenfunction} of $((-\Delta)_p^s,W_{0}^{s,p}(\Omega))$.}
\end{remark}

To use some of the minimax theorems in the literature, we have to check that the associated energy functional also satisfies some kind of compactness condition. 

\begin{definition} 
{\rm
 We will say that $(u_{n})\subset X$ is a PS--sequence for $J$ if
$$|J(u_{n})|\leqslant C\quad\mbox{ for all } n, \ \mbox{ and }\  J^{\prime}(u_{n})\rightarrow 0, \mbox{ as }n\rightarrow\infty,$$
where $C$ is a positive constant. We say that a functional $J\in C^1(X,\mathbb{R})$ satisfies the Palais--Smale condition (PS--condition) if any PS-sequence $(u_n)\subset X$ possesses a convergent subsequence.
}
\end{definition}

To prove the existence of a second solution for problem (\ref{proba1}) in Theorem \ref{maintheo}, we will need the concept of critical groups from infinite-dimensional Morse Theory.

 Define $J_{\lambda}^{c}=\{w\in X|\, J_{\lambda}(w)\leqslant c\}$,  the sub--level set of $J_{\lambda}$ at $c$, and set 
$$
    \mathcal{K}=\{u\in X|\, J_{\lambda}^{\prime}(u)=0\} ,
$$ 
the critical set of $J_\lambda$.  For  an isolated critical point  $u_0$  of $J_{\lambda}$, the $q$--critical groups of $J_{\lambda}$ at $u_0$, with coefficients in a field $\F$ of characteristic $0$, are defined by 
\begin{equation}
    C_{k}(J_{\lambda},u_0)=H_{k}(J_{\lambda}^{c_{0}}\cap U,J_{\lambda}^{c_{0}}\cap U\backslash\{u_0\}),\quad\mbox{ for all }k\in\Z,
    \nonumber 
\end{equation}
where $c_{0}=J_{\lambda}(u_0)$, $U$ is a neighborhood of $u_0$ that contains no critical points of $J_{\lambda}$ other than $u_0$, and $H_{*}$ denotes the singular homology groups. The critical groups are independent of the choice of $U$ by the excision property of homology (see Hatcher \cite{AH}). For more information on the definition of critical groups, we refer the reader to \cite{KC}, \cite{PerSch}, \cite{MMP}, and \cite{MW1}. 

Next, we present the concept of the critical groups at infinity introduced by Bartsch and Li in \cite{BLi}. Assume that $J_\lambda\in C^1(X,\R)$ satisfies the Palais-Smale condition. Let $\mathcal{K}=\{u\in X:J_\lambda^{\prime}(u)=0\}$ be the set of critical points of $J_\lambda$ and assume that under these assumptions the critical value set is bounded from below; that is, 
$$a_o < \inf J_\lambda(\mathcal{K}),$$
for some $a_o\in\R.$

The critical groups at infinity are defined by
\begin{equation} 
C_{k}(J_\lambda,\infty) = H_{k}(X,J_\lambda^{a_{o}}),\quad\mbox{ for all }k\in\Z, 
\label{cinfdef}
\end{equation}
(see \cite{BLi}).  These critical groups are well-defined as a consequence of the Second Deformation Theorem (see Perera and Schechter \cite[Lemma $1.3.7$]{PerSch}).  

In this work, we will use the concept of a critical point of a functional being of a mountain-pass type. We use the definition 
 found in Hofer \cite{Hofer2} and Montreanu {\it et al.} in \cite{MMP}.

\begin{definition}\cite[Definition $6.98$]{MMP} Let $X$ be a Banach space, $J\in C^{1}(X,\R)$, and $u_0\in\mathcal{K}.$ We say that $u_0$ is of {\it mountain-pass type} if, for any open neighborhood $U$ of $u_0$, the set $\{w\in U|\, J(w)<J(u_0)\}$ is nonempty and not path-connected.    
\label{mtpassdef}
\end{definition}

The critical groups of mountain-pass type can be described by the following proposition found in Montreanu \textit{et al.}  \cite{MMP}:

\begin{proposition}
    \cite[Proposition $6.100$]{MMP} Let $X$ be a reflexive Banach space, $J\in C^{1}(X,\R),$ and $u_0\in\mathcal{K}$ be isolated with $c=J(u_{o})$ in $J(\mathcal{K}).$ If $u_0$ is of mountain-pass type, then $C_{1}(J,u_0)\not\cong 0.$
    \label{propMMP}
\end{proposition}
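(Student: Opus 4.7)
The plan is to establish $C_1(J,u_0)\neq 0$ by reducing the computation of the critical group to a pair whose zeroth relative homology can be analyzed using the fact that the sublevel set is not path-connected, and then appealing to the long exact sequence of a pair.

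First, I would localize: since $u_0$ is isolated in $\mathcal{K}$, I can pick an open neighborhood $U$ of $u_0$ so small that $\mathcal{K}\cap \overline U = \{u_0\}$ and $U$ is contained in a chart on which a locally Lipschitz pseudo-gradient vector field for $J$ exists (here the reflexivity of $X$ plays its usual role in the construction of pseudo-gradients). Set $c=J(u_0)$, $A = J_\lambda^{c}\cap U$, and $W=\{w\in U:J(w)<c\}$. By definition, $C_1(J,u_0) = H_1(A,\,A\setminus\{u_0\})$.

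Next, I would use the negative pseudo-gradient flow of $J$, together with a standard cut-off to keep trajectories inside $U$, to build a strong deformation retraction of the pair $(A,A\setminus\{u_0\})$ onto $(W\cup\{u_0\},\,W)$. The point is that any $w\in A$ with $J(w)=c$ and $w\neq u_0$ is regular (since $u_0$ is the only critical point in $\overline U$), so the flow instantaneously pushes it into $W$; while $u_0$ is fixed by the flow. By homotopy invariance, this yields
\[
C_1(J,u_0) \;=\; H_1(A,\,A\setminus\{u_0\}) \;\cong\; H_1(W\cup\{u_0\},\,W).
\]
I would then write down the long exact sequence of the pair $(W\cup\{u_0\},W)$ in reduced singular homology:
\[
H_1(W\cup\{u_0\}) \longrightarrow H_1(W\cup\{u_0\},W) \longrightarrow \widetilde H_0(W) \longrightarrow \widetilde H_0(W\cup\{u_0\}).
\]
By the mountain-pass hypothesis, $W$ is nonempty and not path-connected, so $\widetilde H_0(W)\neq 0$. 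If I can show that $W\cup\{u_0\}$ is path-connected, then $\widetilde H_0(W\cup\{u_0\})=0$, the connecting morphism is surjective, and hence $H_1(W\cup\{u_0\},W)\neq 0$, giving the conclusion.

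The main obstacle is the path-connectedness of $W\cup\{u_0\}$: every path-component of $W$ must have $u_0$ in its closure, so that adjoining $u_0$ glues the components together through $u_0$. I would argue this by contradiction and by a shrinking-neighborhood argument. Suppose some component $W'$ of $W$ does not accumulate at $u_0$; then $\mathrm{dist}(u_0,W')>0$, so replacing $U$ by a smaller open ball $U'\subset U$ of radius less than this distance produces a neighborhood of $u_0$ on which $\{w\in U':J(w)<c\}$ excludes $W'$ entirely. Since the mountain-pass property holds for every neighborhood of $u_0$, the set $\{w\in U':J(w)<c\}$ must still be nonempty and not path-connected. Iterating, we may shrink $U$ to a neighborhood in which every remaining component of $W$ accumulates at $u_0$; with this choice of $U$, the set $W\cup\{u_0\}$ is path-connected. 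Combined with the previous step, this yields $C_1(J,u_0)\neq 0$, completing the proof.
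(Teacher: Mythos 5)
The paper offers no proof of this proposition---it is quoted directly from Motreanu--Motreanu--Papageorgiou---so there is nothing in-paper to compare against; judged on its own terms, your outline has the right homological skeleton (reduce $C_1(J,u_0)$ to $H_1(W\cup\{u_0\},W)$ with $W=\{w\in U: J(w)<c\}$, then run the long exact sequence of the pair and use $\widetilde{H}_0(W)\neq 0$), but the two nontrivial inputs are not actually established. The first, lesser issue is the deformation of $(J^{c}\cap U,\,J^{c}\cap U\setminus\{u_0\})$ onto $(W\cup\{u_0\},W)$: a pseudo-gradient field truncated near $\partial U$ vanishes exactly where you need it, so level-$c$ regular points close to $\partial U$ are never pushed into $W$ and the claimed retraction fails there. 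The correct tool is the second deformation theorem, which is where the hypotheses that $c$ is isolated in $J(\mathcal{K})$ and that $J$ satisfies a Palais--Smale-type condition (a standing assumption in the cited source, suppressed in the statement and absent from your argument) actually enter; reflexivity has nothing to do with the existence of pseudo-gradients.

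The serious gap is the path-connectedness of $W\cup\{u_0\}$, which is the heart of Hofer's theorem. Knowing that $u_0$ lies in the closure of a path-component $W'$ of $W$ does not make $u_0$ path-accessible from $W'$: a boundary point of an open set need not be joinable to it by a path lying in $W'\cup\{u_0\}$ (comb-space/topologist's-sine-curve behaviour, which nothing excludes for sublevel sets of a $C^1$ functional on a Banach space). Moreover, the shrinking-and-iterating step conflates the components of $W\cap U'$ with those of $W$: a single component of $W$ may shatter into many components of $W\cap U'$, new components staying away from $u_0$ may appear at every scale, and the iteration has no reason to terminate or stabilize---the mountain-pass hypothesis only guarantees that the \emph{union} $\{J<c\}\cap U$ accumulates at $u_0$, not that each component does. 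What is actually needed (and suffices) is weaker than full path-connectedness, namely that at least two distinct path-components of $W$ be joined to $u_0$ by paths inside $W\cup\{u_0\}$, so that $\widetilde{H}_0(W)\to\widetilde{H}_0(W\cup\{u_0\})$ fails to be injective; but establishing even this accessibility is done in Hofer's and Motreanu--Motreanu--Papageorgiou's proofs by a genuine dynamical argument with the negative pseudo-gradient flow (combined with the fact that $C_0(J,u_0)=0$ because $u_0$ is not a local minimizer), not by a metric shrinking argument. As written, this step is a gap, and it is precisely the step that makes the proposition nontrivial.
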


Put $\mathcal{K}_d=\{u\in X|J(u)=d,J^{\prime}(u)=0\}$, the critical set at level $d$.
One of the critical points that will be obtained in the proof of Theorem \ref{maintheo} satisfies a variant of the mountain-pass theorem due to Hofer, which we present next for the reader's convenience. 

\begin{theorem}\cite{Hofer2}
Assume that $X$ is a real Banach space. Let $J\in C^{1}(X,\R)$ satisfy the Palais-Smale condition and assume that $e_0$ and $e_1$  are distinct points in $X$. Define 
\begin{equation}\label{def_A}
    A=\{a\in C([0,1],X)|\quad a(i)=e_i,\mbox{ for }i=0,1\}, 
\end{equation}
\begin{equation}
    d=\inf_{a\in A}\sup J(|a|),\quad |a|=a([0,1]),\quad c=\max\{J(e_0),J(e_1)\}.
    \label{defimagea}
\end{equation}
If $d>c$, the set $\mathcal{K}_d$ is non-empty. Moreover, there exists at least one critical point $u_0$ in $\mathcal{K}_d$ that is either a local minimum or of mountain-pass type. If all the critical points in $\mathcal{K}_d$ are isolated in $X$ the set $\mathcal{K}_d$ contains a critical point of mountain-pass type.    
\label{hofmaintheo}
\end{theorem}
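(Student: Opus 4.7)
The plan is to treat the statement as a refinement of the classical Ambrosetti--Rabinowitz mountain-pass theorem, proved in three stages: non-emptiness of $\mathcal{K}_d$, the dichotomy (local minimum vs.\ mountain-pass type), and the isolation refinement. Throughout, the essential tool is the Deformation Lemma, whose hypotheses are available because $J$ is $C^1$ and satisfies the Palais--Smale condition.

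For the non-emptiness of $\mathcal{K}_d$, I would reason by contradiction and apply the standard mountain-pass argument: if $\mathcal{K}_d=\emptyset$, the Deformation Lemma produces $\varepsilon>0$ (small enough that $c<d-\varepsilon$, using $d>c$) and a continuous $\eta:[0,1]\times X\to X$ mapping $J^{d+\varepsilon}$ into $J^{d-\varepsilon}$ while fixing points where $J\leqslant d-\varepsilon$. Choosing $a\in A$ with $\sup J(|a|)<d+\varepsilon$, the path $t\mapsto \eta(1,a(t))$ still lies in $A$ (since $e_0,e_1$ are fixed), and has image in $J^{d-\varepsilon}$, contradicting the infimum definition of $d$.

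For the dichotomy, assume for contradiction that no $u\in\mathcal{K}_d$ is either a local minimum or of mountain-pass type. By Definition \ref{mtpassdef} (negated), each such $u$ admits an open neighborhood $U_u$ for which $V_u:=\{w\in U_u:J(w)<d\}$ is empty or path-connected; the empty case would force $u$ to be a local minimum, excluded by assumption, so each $V_u$ is non-empty and path-connected. Compactness of $\mathcal{K}_d$ (a consequence of PS) extracts a finite subcover $U_1,\dots,U_m$. Apply the Deformation Lemma with $N=\bigcup_i U_i$ to get $\varepsilon>0$ and $\eta$ sending $J^{d+\varepsilon}\setminus N$ into $J^{d-\varepsilon}$, then deform a near-minimizing path $a\in A$ to $\tilde a:=\eta(1,a(\cdot))$. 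On each sub-arc of $\tilde a$ that enters some $U_i$, use the path-connectedness of $V_i$ to splice in a detour through $V_i$ that joins the entry and exit points while remaining in $\{J<d\}$. The resulting path lies in $A$ and satisfies $\sup J(|\cdot|)<d$, the desired contradiction. The main obstacle here is making the splicing rigorous: one must choose the sub-cover carefully, ensure the entry and exit points of the deformed path lie in $V_i$ (so that connecting them inside $V_i$ is possible), and verify continuity at the gluing points. This is the technical heart of Hofer's argument.

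For the isolation refinement, suppose all critical points in $\mathcal{K}_d$ are isolated (so $\mathcal{K}_d$ is a finite set by compactness) and none is of mountain-pass type. By the previous step $\mathcal{K}_d$ contains at least one point which must therefore be a local minimum, and indeed every member of $\mathcal{K}_d$ is either a local minimum or mountain-pass type, so under our assumption every point of $\mathcal{K}_d$ is an isolated local minimum at level exactly $d$. Around each such $u_i$ pick a small ball $B_{r_i}(u_i)$ containing no other critical points and on which $J\geqslant d$. Using PS, one shows that on $\partial B_{r_i/2}(u_i)$ one has $\inf_{\|J'\|}>0$, so the deformation flow can push any path entering $B_{r_i}(u_i)$ at level $\leqslant d+\varepsilon$ outside the ball while strictly decreasing $J$; gluing these local pushes yields a path in $A$ with image in $J^{d-\varepsilon'}$ for some $\varepsilon'>0$, contradicting the definition of $d$. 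Hence at least one critical point in $\mathcal{K}_d$ is of mountain-pass type. I expect this step to be the subtlest, since the deformation must simultaneously avoid each isolated local minimum while respecting the path's endpoints $e_0,e_1$.
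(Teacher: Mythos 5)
This theorem is quoted from Hofer \cite{Hofer2} and the paper gives no proof of it, so there is no internal argument to compare against; I can only assess your sketch on its own terms. Your first two stages follow the standard architecture (the classical deformation argument for non-emptiness, and, for the dichotomy, the contradiction via a finite cover of $\mathcal{K}_d$ by neighborhoods with non-empty path-connected strict sublevel sets, followed by splicing detours into a deformed near-optimal path). That is indeed the shape of Hofer's proof, and you are right that the splicing --- arranging that the entry and exit points of the deformed path land in the path-connected pieces and that the glued path is continuous and respects the endpoints --- is where all the work lies; as written it remains a sketch rather than a proof.

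The third stage, however, contains a genuine error. You assert that ``every member of $\mathcal{K}_d$ is either a local minimum or mountain-pass type.'' Stage 2 only produces \emph{one} critical point with that property, and the blanket claim is false: a critical point $u$ at level $d$ for which some neighborhood $U$ has $\{w\in U: J(w)<d\}$ non-empty and path-connected is neither a local minimum nor of mountain-pass type (in finite dimensions, $J(x)=x^3$ at the origin is the model case). So under the hypothesis that no point of $\mathcal{K}_d$ is of mountain-pass type you cannot conclude that every point is an isolated local minimum; you must treat the non-minimum points by the stage-2 splicing and the isolated local minima by a separate mechanism. Moreover, the mechanism you propose for the minima does not work as stated: near an isolated local minimum at level $d$ one has $J\geqslant d$ on a whole ball, so no deformation supported in that ball can push the path to a level strictly below $d$ there. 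The correct move is to reroute the path \emph{around} the minimum (using that $\partial B_{r}(u_i)$ is path-connected in a Banach space and contains no other critical points, so the detour can then be deformed below level $d$ away from $u_i$), which is a different and more delicate argument than ``strictly decreasing $J$ inside the ball.''
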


\begin{remark}
{\rm Once we prove that the functional $J_\lambda$ defined in (\ref{funcdef1}) satisfies the conditions of Theorem \ref{hofmaintheo} in Section \ref{secex}, for the case $f(0)\ne 0$, assuming that $J_\lambda$ has only one critical point $u_\lambda$, it will follow from Proposition \ref{propMMP} that 
\begin{equation}
    C_{1}(J_{\lambda},u_{\lambda})\not\cong 0.
    \label{cmountaing1}
\end{equation}
 
We do not have information about the other critical groups $C_k(J_\lambda,u_\lambda)$ when $k\ne 1$. But, the fact that $C_{1}(J_\lambda,u_\lambda)$ is nontrivial will be enough to prove the existence of a second critical point for the functional $J_\lambda$. A similar argument will be used in Section \ref{secproofmainres} for the case $f(0)=0$. 

}
\end{remark}
 
Finally, the last result we will need to prove multiplicity results for problem (\ref{proba1}) for the case $f(0)\ne 0$ is found in Bartsch and Li \cite{BLi}. 

\begin{proposition}\cite[Proposition $3.6$]{BLi}
    Suppose that $J\in C^{1}(X,\R)$ satisfies the Palais-Smale condition at level $c$ for every $c\in\R$. If $\mathcal{K}=\emptyset$, then $C_{k}(J ,\infty)\cong 0$ for all $k\in\Z.$ If $\mathcal{K}=\{u_\lambda\}$, then $C_{k}(J ,\infty)\cong C_{k}(J ,u_\lambda)$, for all $k\in\Z.$
    \label{bliprop}
\end{proposition}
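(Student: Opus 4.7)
My plan is to unravel the definition $C_k(J,\infty)=H_k(X,J^{a_o})$ for some $a_o<\inf J(\mathcal{K})$ and push the sublevel set $J^{a_o}$ through a controlled homotopy, using the Second Deformation Theorem (already invoked in the paper to make $C_k(J,\infty)$ well-defined) as the single engine. The hypothesis that $J$ satisfies the Palais--Smale condition at every level is precisely what is needed to run that deformation in the non-compact setting.

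For the first assertion, suppose $\mathcal{K}=\emptyset$. Then for every $a_o\in\R$ the closed interval $[a_o,+\infty)$ contains no critical values of $J$. The Second Deformation Theorem combined with the fact that $J$ is bounded below on its own is not needed; what is needed is the deformation between $J^{a_o}$ and $J^b$ for arbitrary $b>a_o$. Since there are no critical points, the negative gradient flow (or a pseudo-gradient deformation) yields a strong deformation retraction of $X$ onto $J^{a_o}$. Consequently, $(X,J^{a_o})$ is a trivial pair and $H_k(X,J^{a_o})\cong 0$ for all $k\in\Z$.

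For the second assertion, put $c=J(u_\lambda)$ and pick $a_o<c$ and $b>c$. The argument is a three-step reduction. First, since there are no critical values in $[b,+\infty)$, $X$ deformation retracts onto $J^b$, so $H_k(X,J^{a_o})\cong H_k(J^b,J^{a_o})$. Second, since the only critical value inside $[a_o,b]$ is $c$, repeated application of the Second Deformation Theorem on the intervals $[c+\varepsilon,b]$ and $[a_o,c-\varepsilon]$ yields, for small $\varepsilon>0$, the isomorphism $H_k(J^b,J^{a_o})\cong H_k(J^{c+\varepsilon},J^{c-\varepsilon})$. Third, because $u_\lambda$ is the \emph{unique} critical point at the level $c$, a standard excision-plus-local-deformation argument (retract $J^{c+\varepsilon}$ onto $J^c$ away from $u_\lambda$, then excise the complement of a small neighborhood $U$ of $u_\lambda$) gives
\begin{equation}
H_k(J^{c+\varepsilon},J^{c-\varepsilon})\;\cong\;H_k\bigl(J^{c}\cap U,\,(J^{c}\cap U)\setminus\{u_\lambda\}\bigr)\;=\;C_k(J,u_\lambda).\nonumber
\end{equation}
Chaining the three isomorphisms yields $C_k(J,\infty)\cong C_k(J,u_\lambda)$ for every $k\in\Z$.

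The main obstacle is the third step: excising down to the local critical pair requires that the negative flow inside $J^{c+\varepsilon}\setminus U$ not get trapped anywhere but at $u_\lambda$, and this in turn requires a quantitative lower bound $\|J'(u)\|\geqslant\delta>0$ away from $u_\lambda$ on the strip $c-\varepsilon\leqslant J(u)\leqslant c+\varepsilon$. This bound is exactly what the Palais--Smale condition at level $c$ furnishes (if it failed, one could extract a PS sequence converging to a second critical point at level $c$, contradicting $\mathcal{K}=\{u_\lambda\}$). Once that bound is in hand, the pseudo-gradient flow pushes $J^{c+\varepsilon}\setminus U$ into $J^{c-\varepsilon}$ while fixing a neighborhood of $u_\lambda$, and the excision axiom closes the argument.
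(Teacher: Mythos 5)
Your argument is correct: it is the standard deformation-theoretic proof of this fact (Second Deformation Theorem to collapse the critical-value-free strips, the Palais--Smale condition to get the uniform lower bound on $\|J'\|$ away from $u_\lambda$, and excision to reduce to the local pair defining $C_k(J,u_\lambda)$). Note that the paper itself supplies no proof --- it imports the statement verbatim from Bartsch and Li --- so there is nothing to compare against; your sketch matches the proof one finds in that reference and in the standard Morse-theory texts, including the necessary replacement of the gradient flow by a pseudo-gradient flow since $X=W_0^{s,p}(\Omega)$ is not a Hilbert space for $p\neq 2$.
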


We shall prove in Section \ref{secex} that $C_{k}(J_\lambda,\infty)\cong 0$ for all $k>0$; that is, the critical groups of $J_\lambda$ at infinity are all trivial for $k\ne 0.$ In particular, we will have $C_{1}(J_\lambda,\infty)\cong 0$. Hence, assuming, by a way of contradiction, that $J_\lambda$ has only the critical point $u_\lambda$ found in Section \ref{firstex}, we will then obtain a contradiction based on the result of Proposition \ref{bliprop} and the assertion in (\ref{cmountaing1}).  

In the next section, we will prove the existence of a mountain-pass type solution for problem (\ref{proba1}).

\section{Existence and a priori estimates}\label{firstex} 

\subsection{Existence of a mountain-pass type solution}\label{subsec1}

In this section, we will show that the functional $J_\lambda$ defined in (\ref{funcdef1}) satisfies the conditions of the variant of the mountain-pass theorem due to Hofer \cite{Hofer2} as presented in Theorem \ref{hofmaintheo}. 

First, by virtue of conditions (\ref{e11}) and (\ref{e12}), it can be shown that there exists $B_1>0$ such that 
\begin{equation}
    F(s)\leqslant B_{1}(|s|^{q+1}+1),\quad\mbox{ for all }s\in\R. 
    \label{condB1}
\end{equation}

It also follows from (\ref{e11}) and (\ref{e12}) that, for all $s\geqslant 0$, there exists $A_1,C_1 >0$ such that 
\begin{equation}
    F(s) \geqslant A_{1}(s^{q+1}-C_1),\quad\mbox{ for all }s\geqslant 0.
    \label{condA1}
\end{equation}

In what follows, let $r>0$ be the positive number given by 
\begin{equation}
    r = \frac{1}{q+1-p},
    \label{defr}
\end{equation}
where $p,q$ satisfy the conditions in hypothesis $(H_1)$. 

In the next two lemmas, we prove the geometric conditions in Theorem \ref{hofmaintheo}.

\begin{lemma}\label{lema2}
There exist $\tau>0$, $c_1>0$ and  $\hat{\lambda}_2 \in (0,1)$ such that if $\|u\|=\tau \lambda ^{-r}$ then $J_\lambda (u) \geqslant c_1(\tau \lambda^{-r})^p$ for all $\lambda \in (0, \hat{\lambda}_2)$, where $r$ is given in (\ref{defr}).
\end{lemma}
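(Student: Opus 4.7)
The plan is to bound $J_\lambda$ from below by a sum of three terms in $\|u\|$, then exploit the scaling $\|u\|=\tau\lambda^{-r}$ together with the algebraic identity $\lambda\cdot(\tau\lambda^{-r})^{q+1}=\tau^{q+1}\lambda^{-rp}$, which is exactly why the exponent $r=1/(q+1-p)$ is chosen.

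First I would handle the quadratic-type part $\tfrac{1}{p}\|u\|^p+\tfrac{1}{p}\int_\Omega V|u|^p\,dx$. By hypothesis $(H_3)$, $V(x)\geqslant -c_V$ with $0<c_V<\lambda_1$, so this part is bounded below by $\tfrac{1}{p}\|u\|^p-\tfrac{c_V}{p}\|u\|_p^p$. Using the variational characterization of $\lambda_1$ from Theorem \ref{lindlind1}, namely $\|u\|_p^p\leqslant\lambda_1^{-1}\|u\|^p$, I obtain
\[
\frac{1}{p}\|u\|^p+\frac{1}{p}\int_\Omega V(x)|u|^p\,dx\;\geqslant\;\frac{\alpha}{p}\|u\|^p,\qquad \alpha:=1-\frac{c_V}{\lambda_1}>0.
\]
For the nonlinear term, the growth estimate (\ref{condB1}) combined with the Sobolev embedding (\ref{condd1}) (which applies since $p-1<q<p_s^*-1$, so $p<q+1<p_s^*$) yields
\[
\lambda\int_\Omega F(u)\,dx\;\leqslant\;\lambda B_1 C_{q+1}^{q+1}\|u\|^{q+1}+\lambda B_1|\Omega|.
\]

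Combining these two estimates gives
\[
J_\lambda(u)\;\geqslant\;\frac{\alpha}{p}\|u\|^p-\lambda B_1 C_{q+1}^{q+1}\|u\|^{q+1}-\lambda B_1|\Omega|.
\]
Now I substitute $\|u\|=\tau\lambda^{-r}$. The definition $r=1/(q+1-p)$ forces $\lambda(\tau\lambda^{-r})^{q+1}=\tau^{q+1}\lambda^{-rp}$, so both the leading $\|u\|^p$ term and the nonlinear term scale like $\lambda^{-rp}$:
\[
J_\lambda(u)\;\geqslant\;\lambda^{-rp}\!\left(\frac{\alpha}{p}\tau^p-B_1 C_{q+1}^{q+1}\tau^{q+1}\right)-\lambda B_1|\Omega|.
\]
Since $q+1>p$, I choose $\tau>0$ small enough that $B_1 C_{q+1}^{q+1}\tau^{q+1-p}\leqslant\alpha/(2p)$, so the parenthesis is at least $\tfrac{\alpha}{2p}\tau^p$. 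Fixing $c_1:=\alpha/(4p)$, the remaining inequality $J_\lambda(u)\geqslant c_1(\tau\lambda^{-r})^p$ reduces to
\[
\frac{\alpha}{4p}\tau^p\;\geqslant\;B_1|\Omega|\,\lambda^{1+rp},
\]
which holds for all sufficiently small $\lambda$ because the exponent $1+rp>0$. Setting $\hat\lambda_2$ to be the minimum of $1$ and the threshold produced by this inequality completes the argument.

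The calculation is essentially routine; the only conceptually delicate point is the extra additive constant $\lambda B_1|\Omega|$ coming from the ``$+1$'' in the upper growth bound on $f$, which does not scale like $\|u\|$. The main obstacle is therefore verifying that after the substitution $\|u\|=\tau\lambda^{-r}$ this constant is still negligible compared with the $\lambda^{-rp}$ term. This is precisely where the positivity of $1+rp$ is used and why one must shrink $\lambda$ beyond whatever smallness was needed to enforce the geometric bound on $\tau$.
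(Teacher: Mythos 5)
Your proof is correct and follows essentially the same route as the paper: the same lower bound on $J_\lambda$ via $(H_3)$, the Rayleigh-quotient characterization of $\lambda_1$, and the Sobolev embedding, followed by the substitution $\|u\|=\tau\lambda^{-r}$ and a choice of small $\tau$ making the $\|u\|^{q+1}$ term absorbable, then shrinking $\lambda$ so that the additive constant $\lambda B_1|\Omega|$ is dominated thanks to $1+rp>0$. The only difference is cosmetic (the paper fixes $\tau$ by an equality rather than an inequality, yielding $c_1=1/(4p)$ instead of your $\alpha/(4p)$), so no further comment is needed.
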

\begin{proof}[Proof:]
By virtue of the Sobolev embedding theorem and hypothesis $(H_3)$, it follows from the definition of $J_\lambda $ in (\ref{funcdef1}) that 
\begin{equation}
\begin{aligned} 
      J_{\lambda}(u) &= \frac{1}{p}\|u\|^p+\frac{1}{p}\int_\Omega V(x) |u|^p dx- \lambda \int_{\Omega}F(u) d x\\
      & \geqslant \frac{1}{p}\|u\|^p-\frac{c_V}{\lambda_1 p} \|u\|^p  - \lambda B_1 C_{q+1}^{q+1} \|u\|^{q+1}- \lambda B_1|\Omega|,    
    \end{aligned}
    \label{eq111}
\end{equation}
for all $u\in X$. 
Let $\tau > 0 $ be a small enough constant such that the following identity is satisfied:
\begin{equation}
    1-\frac{c_{V}}{\lambda_1}=\frac{3}{2}pC_{q+1}^{q+1}B_1\tau ^{q+1-p}.
    \label{cv1cond}
\end{equation} 
Next, setting $\|u\|=\lambda^{-r}\tau$ in (\ref{eq111}) and using the fact that  $r(q+1)+1=-rp$, we get that 
\begin{equation}
    \begin{aligned}
         J_{\lambda}(u) \geqslant  \lambda^{-rp} \left[ \frac{\tau^p}{p}\left(1- \frac{c_V}{\lambda_1}\right) -B_1  C_{q+1}^{q+1} \tau^{q+1} - \lambda^{1+rp} B_1 |\Omega|\right],
    \end{aligned}
    \label{eq112}
\end{equation}
for all $u\in X$.

Then, by virtue of (\ref{cv1cond}), it follows from (\ref{eq112}) that 
\begin{equation} 
         J_{\lambda}(u)   \geqslant  \lambda^{-rp}\left(\frac{1}{2}pC_{q+1}^{q+1}B_1\tau ^{q+1-p}-\lambda^{1+rp}|\Omega|B_1 \right), 
            \label{eq113}
\end{equation}
for all $u\in X$.  

Finally, choose $\lambda \in (0,\hat{\lambda}_2)$ with $\hat{\lambda}_2:=\tau^{p/(1+rp)}(4pB_1|\Omega|)^{-1/(1+rp)}$. Then, for this choice of $\lambda$, we obtain from (\ref{eq113}) that
\begin{equation} 
         J_{\lambda}(u)   \geqslant c_1(\tau\lambda^{-r})^{p};\quad\mbox{ for }u\in X,
            \nonumber 
\end{equation}
where $c_1=\frac{1}{4p}$.   This concludes the proof of the lemma. 
 
\end{proof}

\begin{lemma}\label{lema1}
Let $\varphi_o \in X$ be such that $\varphi_o > 0$ and $\|\varphi_o\|=1$. There exists $\hat{\lambda}_1 >0$ such that if $\lambda \in (0, \hat{\lambda}_1)$ then $J_\lambda (c\lambda ^{-r}\varphi_o) \leqslant 0 $, where $r$ is given in (\ref{defr}).
\end{lemma}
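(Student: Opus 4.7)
The plan is to substitute $u = c\lambda^{-r}\varphi_o$ directly into the formula (\ref{funcdef1}) for $J_\lambda$ and exploit the pointwise lower bound (\ref{condA1}) on $F$, which is available here because $\varphi_o > 0$ forces $c\lambda^{-r}\varphi_o \geqslant 0$ almost everywhere in $\Omega$. Since $\|\varphi_o\| = 1$ and $V \in L^\infty(\R^N)$, the kinetic-plus-potential part of the functional can be bounded above by $C_2 (c\lambda^{-r})^p$, where $C_2 := \tfrac{1}{p}\bigl(1 + \|V\|_\infty \|\varphi_o\|_p^p\bigr)$ is a constant independent of $c$ and $\lambda$. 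Using (\ref{condA1}) for the nonlinear part then yields an upper estimate of the form
\[
J_\lambda(c\lambda^{-r}\varphi_o) \leqslant C_2 (c\lambda^{-r})^p - \lambda A_1 c^{q+1} \lambda^{-r(q+1)} \|\varphi_o\|_{q+1}^{q+1} + \lambda A_1 C_1 |\Omega|.
\]

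The next step is to invoke the scaling identity $1 - r(q+1) = -rp$, which follows directly from $r(q+1-p) = 1$ in (\ref{defr}) and is exactly the relation already used in Lemma \ref{lema2}. Factoring $c^p \lambda^{-rp}$ from the first two terms gives
\[
J_\lambda(c\lambda^{-r}\varphi_o) \leqslant c^p \lambda^{-rp}\bigl[C_2 - A_1 c^{q+1-p}\|\varphi_o\|_{q+1}^{q+1}\bigr] + \lambda A_1 C_1 |\Omega|.
\]
Because $q + 1 - p > 0$ by hypothesis $(H_1)$, choosing $c$ large enough, depending only on $C_2$, $A_1$ and $\|\varphi_o\|_{q+1}$, makes the bracketed quantity equal to some strictly negative constant $-\delta$. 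With $c$ now fixed, the remaining task is to ensure that
\[
-\delta c^p \lambda^{-rp} + \lambda A_1 C_1 |\Omega| \leqslant 0,
\]
which rearranges to the explicit condition $\lambda^{1+rp} \leqslant \delta c^p / (A_1 C_1 |\Omega|)$ and defines the threshold $\hat{\lambda}_1$.

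The only real obstacle is bookkeeping: one must track signs and exponents carefully so that the dominant term of order $\lambda^{-rp}$ (large and negative as $\lambda \to 0^+$) overwhelms the residual $\lambda A_1 C_1 |\Omega|$ (small and positive as $\lambda \to 0^+$). The hypothesis $\varphi_o > 0$ is essential precisely because it activates the lower bound (\ref{condA1}) rather than only the upper bound (\ref{condB1}), supplying the negative contribution of the correct order; the normalization $\|\varphi_o\| = 1$ is purely cosmetic, since without it $\|\varphi_o\|^p$ would simply be absorbed into $C_2$. Finally, I would note that by enlarging $c$ if necessary one also guarantees $c > \tau$, so that the endpoint $c\lambda^{-r}\varphi_o$ lies outside the sphere where Lemma \ref{lema2} provides the positive lower bound, which is the geometric configuration needed for the mountain-pass setup of Theorem \ref{hofmaintheo}.
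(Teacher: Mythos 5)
Your proposal is correct and follows essentially the same route as the paper's proof: substitute $\ell=c\lambda^{-r}\varphi_o$, bound the potential term by a constant times $\ell^p$, use the lower bound (\ref{condA1}) on $F$ (valid since $\varphi_o>0$), exploit the scaling identity $1-r(q+1)=-rp$, choose $c$ large enough that the coefficient of $\lambda^{-rp}$ is strictly negative, and then take $\lambda$ small so that this dominant negative term absorbs the residual $\lambda A_1C_1|\Omega|$. The only cosmetic difference is that the paper estimates $\|\varphi_o\|_p^p\leqslant\|\varphi_o\|^p/\lambda_1$ via the Rayleigh quotient before fixing $c$ by an explicit identity, whereas you absorb $\|\varphi_o\|_p^p$ directly into the constant.
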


\begin{proof}[Proof:]
Set $\ell=c \lambda^{-r}$, where $c,\lambda >0$ are positive constants to be chosen shortly. Then, by virtue of hypothesis $(H_1)$, the estimate (\ref{condA1}), and the characterization of the first eigenvalue of the fractional $p$-Laplacian from Theorem \ref{lindlind1},  we obtain 
\begin{eqnarray}\label{E<0}
\begin{aligned}
  J_{\lambda}(\ell \varphi_o) & =\frac{1}{p}\|\ell \varphi_o\|^p +\frac{1}{p}\int_\Omega V(x)|\ell\varphi_o|^p dx - \lambda \int_{\Omega} F(\ell \varphi_o)\,dx\\
   & \leqslant \frac{\ell^p}{p}\| \varphi_o\|^p+\frac{\ell^p}{p}\|V\|_\infty\|\varphi_o\|_p^p- \lambda A_1 \ell^{q+1} \int_{\Omega} \varphi_o^{q+1} d x+ \lambda A_1C_1 |\Omega|\\
   & \leqslant \frac{\ell^p}{p}\left(1+\frac{1}{\lambda_1}\|V\|_\infty \|\varphi_o\|^p- \lambda p A_1 \ell^{q+1-p}\|\varphi_o\|_{q+1}^{q+1}\right)+\lambda A_1C_1 |\Omega|.
 \end{aligned}
\end{eqnarray}

Next, define $c>0$ such that 
\begin{equation}
    c^{q+1}=\frac{2c^p}{pA_1\|\varphi_o\|_{q+1}^{q+1}}\left(1+\frac{1}{\lambda_1}\|V\|_\infty \right).
    \label{defcc}
\end{equation}

Then, by virtue of (\ref{defcc}) and the definition of $\ell$, it follows from (\ref{E<0}) that
 \begin{eqnarray}\label{Eqdev1}
\begin{aligned}
  J_{\lambda}(\ell \varphi_o) & \leqslant \lambda^{-rp}\frac{c^p}{p}\left[-\left(1+\frac{1}{\lambda_1}\|V\|_\infty\right)+\lambda^{1+rp}A_1 C_2|\Omega| \right]. \end{aligned}
\end{eqnarray}

Set $\hat{\lambda}_1$ to be 
$$\hat{\lambda}_1 = \left[\frac{1+\frac{1}{\lambda_1}\|V\|_\infty}{2pA_1C_2|\Omega|}\right]^{\frac{1}{1+rp}}.$$
Then, it follows from (\ref{Eqdev1}) that
 \begin{equation*}
     J_{\lambda}(\ell \varphi_o) \leqslant -\frac{c^p}{2p}\lambda^{-rp} \leqslant 0, 
 \nonumber 
 \end{equation*}
 for all $\lambda\in (0,\hat{\lambda}_1)$,
 which establishes the lemma.
\end{proof}

In the next lemma, we will show that the functional $J_\lambda$ satisfies the Palais-Smale condition.

\begin{lemma}\label{PS}
Assume that $(H_1)-(H_3)$ are satisfied and  $\lambda \in (0, \, \lambda_3$) with $\lambda_3:=\min\{\hat{\lambda}_1,\,\hat{ \lambda}_2\}$, where $\hat{\lambda}_1$ is given 
by Lemma \ref{lema1} and  $\hat{\lambda}_2$  is given by Lemma \ref{lema2}.
Then, $J_\lambda$ satisfies the Palais-Smale condition.
\end{lemma}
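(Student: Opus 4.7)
The plan is the standard three-step argument for verifying the Palais--Smale condition for nonlinear problems of this type: first boundedness of a PS--sequence via the Ambrosetti--Rabinowitz condition and $(H_3)$; then passage to a weakly convergent subsequence using reflexivity of $X$ together with compactness of the fractional Sobolev embedding; and finally upgrading the weak convergence to strong convergence through the $(S_+)$ property of the fractional $p$-Laplacian operator.

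\textbf{Boundedness.} Let $(u_n)\subset X$ be a PS--sequence, so $|J_\lambda(u_n)|\leqslant C$ and $\langle J_\lambda'(u_n),\varphi\rangle=\varepsilon_n\|\varphi\|$ with $\varepsilon_n\to 0$. I would compute the combination $\theta J_\lambda(u_n)-\langle J_\lambda'(u_n),u_n\rangle$. Using the explicit forms of $J_\lambda$ and $J_\lambda'$ this equals
\begin{equation*}
\Bigl(\tfrac{\theta}{p}-1\Bigr)\|u_n\|^p+\Bigl(\tfrac{\theta}{p}-1\Bigr)\int_\Omega V(x)|u_n|^p\,dx+\lambda\int_\Omega\bigl(u_nf(u_n)-\theta F(u_n)\bigr)\,dx.
\end{equation*}
Hypothesis $(H_2)$ controls the last integral from below by $\lambda K|\Omega|$, while $(H_3)$ and the variational characterization of $\lambda_1$ give $\int_\Omega V(x)|u_n|^p\,dx\geqslant -c_V\|u_n\|_p^p\geqslant -(c_V/\lambda_1)\|u_n\|^p$. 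Since $\theta>p$ and $c_V<\lambda_1$, the coefficient $(\theta/p-1)(1-c_V/\lambda_1)$ is strictly positive, so the inequality $\theta C+\varepsilon_n\|u_n\|$ bounding the above expression forces $\|u_n\|$ to stay bounded.

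\textbf{Weak convergence and compactness of lower-order terms.} Since $X$ is a reflexive Banach space, up to a subsequence $u_n\rightharpoonup u$ in $X$. Because $q+1<p_s^*$ and $p<p_s^*$, the fractional Sobolev embedding from \cite[Theorem 6.7]{Nezza1} gives $u_n\to u$ strongly in $L^{q+1}(\Omega)$ and in $L^p(\Omega)$, and $u_n\to u$ a.e.\ in $\Omega$ along a further subsequence. Standard continuity of the Nemytskii operator associated with $f$ (using the growth bound in $(H_1)$) then yields $f(u_n)\to f(u)$ in $L^{(q+1)/q}(\Omega)$, and $|u_n|^{p-2}u_n\to|u|^{p-2}u$ in $L^{p/(p-1)}(\Omega)$. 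Hence the two lower-order pieces
\begin{equation*}
\int_\Omega V(x)\bigl(|u_n|^{p-2}u_n-|u|^{p-2}u\bigr)(u_n-u)\,dx\quad\text{and}\quad \lambda\int_\Omega\bigl(f(u_n)-f(u)\bigr)(u_n-u)\,dx
\end{equation*}
both tend to $0$.

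\textbf{Upgrading to strong convergence.} Writing $\mathcal{A}:X\to X^*$ for the fractional $p$-Laplacian part of $J_\lambda'$ (the first term in \eqref{frechet1}), the PS--hypothesis gives $\langle J_\lambda'(u_n)-J_\lambda'(u),u_n-u\rangle\to 0$, so combining with the previous step,
\begin{equation*}
\langle\mathcal{A}(u_n)-\mathcal{A}(u),u_n-u\rangle\to 0.
\end{equation*}
The operator $\mathcal{A}$ is well known to satisfy the $(S_+)$ property on $X$ (see, e.g., the references in the paper), so this forces $u_n\to u$ strongly in $X$, which completes the verification of the Palais--Smale condition. The main obstacle here is really the last step: one must invoke (or cite) the $(S_+)$ property of the nonlocal operator $\mathcal{A}$, since unlike the local $p$-Laplacian one cannot simply pointwise split the Gagliardo integrand; the rest of the argument is routine given $(H_1)$--$(H_3)$.
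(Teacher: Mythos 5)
Your proposal is correct and follows essentially the same path as the paper: boundedness from the Ambrosetti--Rabinowitz condition together with $(H_3)$ and the Rayleigh-quotient characterization of $\lambda_1$, weak convergence plus the compact embedding into $L^{q+1}(\Omega)$ and $L^p(\Omega)$ to dispose of the lower-order terms, and then strong convergence from the principal part. The only difference is at the very end: where you invoke the abstract $(S_+)$ property of the nonlocal operator, the paper derives the needed monotonicity explicitly via a H\"older-type estimate, $\langle \mathcal{A}(u_n)-\mathcal{A}(u),\,u_n-u\rangle \geqslant (\|u_n\|^{p-1}-\|u\|^{p-1})(\|u_n\|-\|u\|)\geqslant 0$, concluding first that $\|u_n\|\to\|u\|$ and then strong convergence from weak convergence plus norm convergence in the uniformly convex space $X$ --- so your final step is legitimate but does require an actual citation for $(S_+)$ (or the short self-contained argument just described), since that is precisely the nontrivial content being black-boxed.
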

\begin{proof}[Proof:]

Let $(u_n)$ be a Palais-Smale sequence for $J_\lambda$ in $X$; that is, 
\begin{equation}
    |J_\lambda (u_n)|\leqslant C,\quad\mbox{ for all } n;
    \label{pss1}
\end{equation}
where $C>0$ is a constant and there exists a sequence of positive numbers $(\varepsilon_n)$ such that 
\begin{equation}
    \langle J'_\lambda(u_n),\varphi\rangle \leqslant \varepsilon_n\|\varphi\|, \quad\mbox{ for all }n,
    \label{pss2}
\end{equation}
and all $\varphi\in X$ and $\varepsilon_n\rightarrow 0$ as $n\rightarrow\infty.$

In particular, setting $\varphi=u_n$ in (\ref{pss2}), we get that there exists $N_1 >0$ such that,  
$$|\langle J_\lambda '(u_n), u_n \rangle| \leqslant \|u_n\|,\quad\mbox{ for all }n\geqslant N_1.$$
Hence, we can write 
\begin{equation}\label{ARpss3}
  -\| u_n\|^p-\| u_n\| 
 \leqslant   -\| u_n\|^p+ \langle J'(u_n), u_n \rangle ,
\quad
\mbox{ for } n\geqslant N_1.
\end{equation}
Thus, using the definition of the Fr\'{e}chet 
derivative of $J_\lambda$ given in 
(\ref{frechet1}), and the definition of 
the norm $\|\cdot\|$ given in (\ref{norm1}) and (\ref{gagnorm}), we obtain from (\ref{ARpss3}) that 
\begin{equation}\label{pss3}
-\| u_n\|^p-\| u_n\| \leqslant  
\int_\Omega V(x) |u_n|^p d x - \lambda \int_\Omega f(u_n)u_n \,dx,
\end{equation}
for $n\geqslant N_1.$

On the other hand, using the estimate in (\ref{ambcond}) in  hypothesis $(H_2)$, we have that 
$$
\frac{1}p\| u_n\|^p-\frac{\lambda}\theta  \int_\Omega f(u_n)u_n \,dx +\frac{\lambda}{\theta} K|\Omega| \leqslant      \frac{1}p\| u_n\|^p-\lambda  \int_\Omega F(u_n) \,dx 
$$
for $n\in \mathbb{N}$; so that, using the 
definition of the $J_\lambda$ in 
(\ref{funcdef1}), 
\begin{equation}\label{ARpss4}
\frac{1}p\| u_n\|^p-\frac{\lambda}\theta  \int_\Omega f(u_n)u_n \,dx +\frac{\lambda}{\theta} K|\Omega| \leqslant      
J_\lambda(u_n) -\frac{1}{p}\int_\Omega V(x) |u_n|^p \ dx,
\end{equation}
for all $n\in\mathbb{N}$.

Now, it follows from (\ref{ARpss4}) and 
the hypothesis in   (\ref{pss1}) that 
\begin{equation}\label{pss4}
\frac{1}p\| u_n\|^p-\frac{\lambda}\theta  \int_\Omega f(u_n)u_n \,dx +\frac{\lambda}{\theta} K|\Omega|
 \leqslant C-\frac{1}p\int_\Omega V(x)|u_n|^p \,dx, 
\end{equation}
for $n\in \mathbb{N}$.

Next, multiply on both sides of the estimate in
(\ref{pss3}) by $\dfrac{1}{\theta}$ and add
$
\displaystyle
\frac{1}{p} \|u_n\|^p 
$
on both sides of the ineqaulity to obtain
\begin{equation}
    \begin{aligned}
   \left( \frac{1}p -\frac{1}{\theta}  \right)\| u_n\|^p - \frac{1}\theta\|
   u_n\|
   &\leqslant  \frac{1}p \| u_n\|^p+\frac{1}{\theta}\left( \int_\Omega V(x) |u_n|^p dx - \lambda \int_\Omega f(u_n)u_n  dx\right),
\end{aligned}
\label{pss5}
\end{equation}
for $n\geqslant N_1$.

Now, it follows from the estimate (\ref{pss4}) 
that 
\begin{equation}\label{ARpss5}
\frac{1}p \| u_n\|^p
	+\frac{1}{\theta}
\left( 
\int_\Omega V(x) |u_n|^p dx 
	- \lambda \int_\Omega f(u_n)u_n  dx
\right)
\leqslant
C - \left( \frac{1}{p} - \frac{1}{\theta}\right)
\int_\Omega V(x) |u_n|^p dx,
\end{equation}
for $n\in\mathbb{N}$.

Consequently, combining the estimates in 
(\ref{pss5}) and (\ref{ARpss5}), 
\begin{equation}\label{ARpss25}
   \left( \frac{1}p -\frac{1}{\theta}\right)\| u_n\|^p 
   	- \frac{1}\theta\|u_n\|
   	\leqslant 
   		C 
   		- \left( \frac{1}{p} - \frac{1}{\theta}\right)
   			\int_\Omega V(x) |u_n|^p dx,
\end{equation}
for $n\geqslant N_1$.

Next, use the estimate for the potential $V$ in
hypothesis $(H_3)$ to obtain from (\ref{ARpss25}) that 
\begin{equation}\label{pss6}
\left( \frac{1}p -\frac{1}{\theta}  \right)\| u_n\|^p 
	- \frac{1}\theta\| u_n\|
   \leqslant  
   C+\left( \frac{1}p -\frac{1}{\theta}\right)
   \frac{c_{V}}{\lambda_1}\|u_n\|^p,
    \quad\mbox{ for } n\geqslant N_1,
\end{equation}
where we have also used the definition of $\lambda_1$ in (\ref{lambda1Dfn}). 

Rearranging (\ref{pss6}) we obtain 
\begin{equation}
\left( \frac{1}p -\frac{1}{\theta}  \right)\left( 1-\frac{c_{V}}{\lambda_1}  \right)\|u_n\|^p- \frac{1}{\theta} \|u_n\| \leqslant C,
    \quad\mbox{ for } n\geqslant N_1,
    \nonumber 
\end{equation}
from which we obtain that $(u_n)$ is bounded in $W_0^{s,p}(\Omega)$. 

Hence, since $(u_n)$ is bounded in $X$, we may invoke
the Banach-Alaoglu theorem (see \cite[Theorem $2.18$]{Lieb}) to deduce, passing to a 
subsequence if necessary, that there exists $u\in X$ such that  
\begin{equation} 
\begin{aligned}
	u_{n} & \rightharpoonup u\quad\mbox{ weakly in }X\quad 
    \mbox{ as } n\to\infty.
              \end{aligned} 
              \nonumber 
\end{equation}
Furthermore, since $1<q+1<p^*$, by virtue of the Sobolev embedding theorem, we can also assume that
\begin{equation} 
\begin{aligned}
	 u_n & \rightarrow u\quad\mbox{ in }L^{q+1}(\Omega)\quad 
    \mbox{ as } n\to\infty \\
     u_{n}(x)&\rightarrow u(x)\quad\mbox{ a.e. in }\Omega\quad\mbox{ as }n\rightarrow\infty.
              \end{aligned} 
              \label{pss13}
\end{equation}

Next, put $q_1^\prime=\frac{q+1}{q};$ so that, $q^\prime>1$, and $q^\prime q=q+1.$ Hence, by virtue of estimate (\ref{condB1}) we get
\begin{equation}
    \begin{aligned}
    |f(u_n)|^{q_1'} &\leqslant  B_1^{q_1'} (|u_n|^q+1)^{q_1'} \\
    &\leqslant C_1 (|u_n|^{qq_1'}+1) \\
    &\leqslant  C_1 (|u_n|^{q+1}+1),
\end{aligned}
\label{estt1}
\end{equation}
for all $n\in\N$, where $C_1$ is a positive constant.

Thus, applying H\"{o}lder's inequality with exponent $q_1^{\prime}$ in (\ref{estt1}) and its conjugate, we obtain 
\begin{equation} 
\begin{aligned}
\lambda \int_\Omega f(u_n)(u_n-u) \,dx & \leqslant C(\|u_n\|_{q+1}+1)\|u_n-u\|_{q+1}\\ &
\leqslant C\|u_n-u\|_{q+1} ,
\end{aligned}
\nonumber 
\end{equation}
where $C$ is a positive constant.

Consequently, letting $n\rightarrow\infty$ in the previous estimate and applying (\ref{pss13}) with the Lebesgue dominated convergence theorem,  we get 
\begin{equation}\label{f(u_n)(u_n-u)}
    \lambda \int_\Omega f(u_n)(u_n-u) d x\to 0, \quad \text{as $n\to \infty$ }.
    \end{equation}
    
Next, put $p^\prime=\dfrac{p}{p-1}$ (recall that we are assuming $p>1$); so that $p^\prime>1$  and $p^\prime(p-1)=p.$ Then, by virtue of H\"{o}lder's inequality we have
\begin{equation} 
\begin{aligned}
\int_\Omega |V(x) ||u_n|^{p-1}|u_n-u|\,dx &\leqslant \|V \|_\infty \|u_n\|^{p-1}_p\|u_n-u\|_p \\
&\leqslant C \|u_n-u\|_p\\
&\leqslant C \|u_n-u\|_{q+1},
\end{aligned}
\nonumber 
\end{equation}
for all $n\in\N$, where $C$ is a positive constant.

Hence, letting $n\rightarrow\infty$ in the previous estimate and applying (\ref{pss13}) with the Lebesgue dominated convergence theorem, we obtain
\begin{equation}\label{nabla(u_n)(u_n-u)}
\int_\Omega |V(x) ||u_n|^{p-1}|u_n-u|\,dx \to 0, \quad \text{as $n\to \infty$ }.\end{equation}

Next, since $(u_n)$ is a Palais-Smale sequence in $X$, it follows from (\ref{pss2}), (\ref{f(u_n)(u_n-u)}), and (\ref{nabla(u_n)(u_n-u)} that 
\begin{equation}\label{eq13}
\lim_{n\to \infty} \int_{\mathbb{R}^{2N}} \frac{\Phi_p(u_n(x)-u_n(y))((u_n-u)(x)-(u_n-u)(y))}{|x-y|^{N+sp}}\,dx=0.
 \end{equation}

Once again, using the fact that $u$ is the weak limit of $u_n$ we have
\begin{equation}\label{eq14}
    \lim_{n \to \infty}\int_{\mathbb{R}^{2N}} \frac{\Phi_p(u(x)-u(y))((u_n-u)(x)-(u_n-u)(y))}{|x-y|^{N+sp}} \,dx=0.
\end{equation}

On the other hand, it follows from an application of the H\"{o}lder's inequality as in \cite[Lemma $3$]{Lopera1} that
\begin{equation}
\begin{aligned}
   & \int_{\Omega} \frac{\Phi_p(u_n(x)-u_n(y))-\Phi_p(u(x)-u(y))}{|x-y|^{N+sp}}   ((u_n-u)(x)-(u_n-u)(y))\,dx\,dy  & \\
&    = \int_{\Omega} \left[\frac{|u_n(x)-u_n(y)|^{p}}{|x-y|^{N+sp}}- \frac{\Phi_p(u_n(x)-u_n(y))(u(x)-u(y))}{|x-y|^{N+sp}} \right.\\
 &   \left.-\frac{\Phi_p(u(x)-u(y))(u_n(x)-u_n(y))}{|x-y|^{N+sp}}+ \frac{|u(x)-u(y)|^p}{|x-y|^{N+sp}} \right] \,dx\,dy\\
 & \geqslant \|u_n\|^p-\|u_n\|^{p-1}\|u\|-\|u_n\|\|u\|^{p-1}+\|u\|^p\\
 & = (\|u_n\|^{p-1}-\|u\|^{p-1})(\|u_n\|-\|u\|). 
\end{aligned}
\label{pss09}
\end{equation} 

Then, in view of the fact that 
$$\left(\|u_n\|^{p-1}-\|u\|^{p-1}\right)(\|u_n\|-\|u\|\geqslant 0, \quad\mbox{ for all }n,$$
it follows from \eqref{eq13},\eqref{eq14}, and (\ref{pss09}) that
\begin{equation}
\lim_{n\to \infty}    ([\|u_n\|^{p-1}-\|u\|^{p-1})(\|u_n\|-\|u\|)=0,
\nonumber 
\end{equation}
from which we get 
\begin{equation}
    \lim_{n\to \infty} \|u_n\|=\| u\|.
    \label{pss11}
\end{equation}

Finally, by virtue of (\ref{pss11}) and the fact that $u_n \rightharpoonup u$ weakly in $X$, we conclude that $u_n \to u$ strongly in $X$. Hence, $J_\lambda$ satisfies the Palais-Smale condition.
 
\end{proof}

Next, we present the main result of this section.

\begin{theorem}\label{existance-sol-mpt}
    Assume that the hypotheses $(H_1)$-$(H_3)$ are satisfied. Then, for $\lambda$ sufficiently small,  the functional $J_\lambda$ has a  critical point $u_\lambda \in X$ of mountain-pass type.
    Moreover, \begin{equation}
        c_1\lambda^{-rp} \leqslant J_\lambda (u_\lambda) \leqslant c_2 \lambda ^{-rp},
        \label{eslp1}
    \end{equation} 
    where $c_1$ and $c_2$ are positive constants independent of $\lambda$, and $r$ is given in (\ref{defr}).
    \label{maintheo1}
\end{theorem}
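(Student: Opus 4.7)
The plan is to apply Hofer's variant of the mountain-pass theorem (Theorem \ref{hofmaintheo}) to $J_\lambda$ with endpoints $e_0 = 0$ and $e_1 = c_0\lambda^{-r}\varphi_o$, where $\varphi_o$ is the positive unit-norm function from Lemma \ref{lema1} and $c_0$ is chosen strictly larger than both the $\tau$ of Lemma \ref{lema2} and the constant $c$ of Lemma \ref{lema1}. The estimate in the proof of Lemma \ref{lema1} remains valid with any larger coefficient because the $-\lambda A_1 \ell^{q+1}$ term dominates the $\ell^p$ term once $q+1>p$. The Palais-Smale condition is supplied by Lemma \ref{PS} for $\lambda \in (0,\lambda_3)$. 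Since $F(0)=0$, we have $J_\lambda(e_0)=0$, while $J_\lambda(e_1)\leqslant 0$ by Lemma \ref{lema1}; thus $c:=\max\{J_\lambda(e_0), J_\lambda(e_1)\}\leqslant 0$.

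To verify $d>c$, I observe that for any $a\in A$ the map $t\mapsto \|a(t)\|$ is continuous on $[0,1]$ and takes the values $0$ and $c_0\lambda^{-r}>\tau\lambda^{-r}$ at the endpoints; by the intermediate value theorem there is $t^*\in(0,1)$ with $\|a(t^*)\|=\tau\lambda^{-r}$, and Lemma \ref{lema2} then gives $\sup J_\lambda(|a|)\geqslant J_\lambda(a(t^*))\geqslant c_1\tau^p\lambda^{-rp}$. Taking the infimum over $A$ yields $d\geqslant c_1\tau^p\lambda^{-rp}>0\geqslant c$ for $\lambda$ small, so Hofer's theorem applies and delivers a critical point $u_\lambda\in\mathcal{K}_d$. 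Relabeling $c_1\tau^p$ as $c_1$ furnishes the lower bound in \eqref{eslp1}. For the upper bound, I test the infimum with the straight segment $a_0(t)=tc_0\lambda^{-r}\varphi_o$ on $[0,1]$: since $\varphi_o\geqslant 0$, estimate \eqref{condA1} applies pointwise along $a_0$, and combining this with the bound $\int_\Omega V|u|^p\,dx \leqslant \tfrac{\|V\|_\infty}{\lambda_1}\|u\|^p$ from $(H_3)$ together with the algebraic identity $1-r(q+1)=-rp$ built into the definition \eqref{defr} of $r$, all $\lambda$-factors collapse to $\lambda^{-rp}$, leaving the supremum of $\alpha t^p - \beta t^{q+1}$ over $[0,1]$ plus an $O(\lambda)$ residual. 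This gives $d\leqslant c_2\lambda^{-rp}$.

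The main obstacle I anticipate is not computational but structural: Hofer's theorem only guarantees that $u_\lambda$ is a local minimum \emph{or} of mountain-pass type, whereas the later application of Proposition \ref{propMMP} in Section \ref{secproofmainres} requires mountain-pass type specifically. This is handled cleanly by a dichotomy on $\mathcal{K}_d$: either $\mathcal{K}_d$ consists of isolated points, in which case the second clause of Theorem \ref{hofmaintheo} forces mountain-pass type, or $\mathcal{K}_d$ is non-discrete, in which case $J_\lambda$ already possesses infinitely many critical points at level $d$ and the contradiction argument in Section \ref{secproofmainres} (which assumes a single nontrivial critical point) is vacuous. In either branch the conclusions of the theorem, including the bounds in \eqref{eslp1}, are in place.
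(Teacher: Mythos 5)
Your proposal is correct and follows essentially the same route as the paper: Hofer's theorem applied with the geometry of Lemmas \ref{lema2} and \ref{lema1}, the Palais--Smale condition from Lemma \ref{PS}, the sphere-crossing argument for the lower bound $d\geqslant c_1\lambda^{-rp}$, and the straight segment $t\mapsto tc\lambda^{-r}\varphi_o$ as a test path for the upper bound. Your explicit handling of the ``local minimum or mountain-pass type'' dichotomy is a detail the paper leaves implicit, but it is resolved the same way there, since the later contradiction arguments assume $\mathcal{K}_d$ is a finite (hence isolated) set, which triggers the second clause of Theorem \ref{hofmaintheo}.
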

\begin{proof}[Proof:]
    It follows from Lemma \ref{E<0}, Lemma \ref{lema1}, and Lemma \ref{PS}, that, for any $\lambda \in (0, \lambda_3)$, the functional $J_\lambda$ defined in (\ref{funcdef1}) satisfies the conditions of Theorem \ref{hofmaintheo}. Therefore, $J_\lambda$ possesses a critical point, $u_\lambda$, with critical value characterized by $$J_\lambda (u_\lambda)=\inf_{a\in A}\max J_\lambda (|a|),$$
    with
    $$A=\{a\in C([0,1],X)|a(0)=0,\quad a(1)=c\lambda^{-r}\varphi_{o}\},$$
    where $a(1)$ is obtained in Lemma \ref{lema1} and $|a|=a([0,1]).$

    Furthermore, by virtue of Lemma \ref{lema1}, we observe that 
$$J_\lambda(sc\lambda^{-r}\varphi_o) \leqslant c_2\lambda^{-rp},\quad\mbox{ for all }0\leqslant s\leqslant 1,$$ 
where $c_0$ is a positive constant independent of $\lambda$. Hence, we conclude that
$$J_\lambda (u_\lambda) \leqslant c_2\lambda ^{-rp}.$$

Finally, it follows from Lemma \ref{lema2} that there exists a positive constant $c_1$ independent of $\lambda$ such that $$c_1\lambda ^{-rp} \leqslant J_\lambda (u),\quad\mbox{ for all }\|u\|=\tau\lambda^{-r}.$$ 
Then, it follows from the characterization of the critical value that $$c_1\lambda ^{-rp} \leqslant J_\lambda (u_\lambda).$$
This concludes the proof of the theorem. 
\end{proof}

The next two results will be used in the proof of a comparison principle for problem (\ref{proba1}).   

 \begin{lemma}\label{norm-estimate}
    Assume that the hypotheses $(H_1)$-$(H_3)$ are satisfied and let $u_\lambda$ be the mountain-pass critical point of $J_\lambda$
    given in Theorem \ref{maintheo1}.  There exists a constant $c$ such that 
     \begin{equation}
         \|u_\lambda \| \leqslant c\lambda ^{-r}.
         \label{reg1}
     \end{equation}
and $r$ is given in (\ref{defr}).
 \end{lemma}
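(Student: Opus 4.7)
The plan is to exploit the critical point equation $\langle J_\lambda'(u_\lambda), u_\lambda\rangle = 0$ together with the upper bound $J_\lambda(u_\lambda) \leqslant c_2\lambda^{-rp}$ from Theorem \ref{maintheo1}. The strategy mimics the boundedness argument in Lemma \ref{PS}, but now applied at the single critical point $u_\lambda$ with a quantitative upper bound for the energy, which is what allows the estimate to be made explicit in $\lambda$.

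First, I would form the combination
\begin{equation*}
J_\lambda(u_\lambda) - \frac{1}{\theta}\langle J_\lambda'(u_\lambda), u_\lambda\rangle
= \left(\frac{1}{p}-\frac{1}{\theta}\right)\|u_\lambda\|^p
  + \left(\frac{1}{p}-\frac{1}{\theta}\right)\int_\Omega V(x)|u_\lambda|^p\,dx
  - \lambda\int_\Omega F(u_\lambda)\,dx
  + \frac{\lambda}{\theta}\int_\Omega f(u_\lambda)u_\lambda\,dx.
\end{equation*}
Since $u_\lambda$ is a critical point, the left-hand side equals $J_\lambda(u_\lambda)$. The Ambrosetti--Rabinowitz condition (\ref{ambcond}) controls the nonlinear part:
\begin{equation*}
\frac{\lambda}{\theta}\int_\Omega f(u_\lambda)u_\lambda\,dx - \lambda\int_\Omega F(u_\lambda)\,dx
\;\geqslant\; \frac{\lambda K}{\theta}|\Omega|.
\end{equation*}
Using hypothesis $(H_3)$ and the variational characterization of $\lambda_1$ in (\ref{lambda1Dfn}), one has $\int_\Omega V(x)|u_\lambda|^p\,dx \geqslant -c_V\|u_\lambda\|_p^p \geqslant -\tfrac{c_V}{\lambda_1}\|u_\lambda\|^p$, so that
\begin{equation*}
J_\lambda(u_\lambda)
\;\geqslant\; \left(\frac{1}{p}-\frac{1}{\theta}\right)\left(1-\frac{c_V}{\lambda_1}\right)\|u_\lambda\|^p + \frac{\lambda K}{\theta}|\Omega|.
\end{equation*}

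Since $\theta > p$ by $(H_2)$ and $c_V < \lambda_1$ by $(H_3)$, the prefactor $\left(\tfrac{1}{p}-\tfrac{1}{\theta}\right)\left(1-\tfrac{c_V}{\lambda_1}\right)$ is a strictly positive constant independent of $\lambda$. Rearranging and inserting the upper bound from (\ref{eslp1}) gives
\begin{equation*}
\left(\frac{1}{p}-\frac{1}{\theta}\right)\left(1-\frac{c_V}{\lambda_1}\right)\|u_\lambda\|^p
\;\leqslant\; c_2\lambda^{-rp} + \frac{|K||\Omega|}{\theta}\,\lambda.
\end{equation*}
For $\lambda \in (0,\lambda_3)$ small, the term $\lambda^{-rp}$ dominates the linear correction (recall $r>0$, hence $-rp<0$), so absorbing the second term into the first yields $\|u_\lambda\|^p \leqslant C\lambda^{-rp}$ for a constant $C$ independent of $\lambda$. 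Taking the $p$-th root furnishes (\ref{reg1}).

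I do not foresee a serious obstacle: the argument is a straightforward Ambrosetti--Rabinowitz-style bound, and every inequality in the chain is standard. The only point requiring minor care is the sign of the constant $K$ in (\ref{ambcond}), which is why the estimate is rearranged so that the $\tfrac{\lambda K}{\theta}|\Omega|$ term is passed to the right-hand side and bounded by $|K||\Omega|\lambda/\theta$ before comparison with $\lambda^{-rp}$.
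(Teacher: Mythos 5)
Your proposal is correct and follows essentially the same route as the paper: test the critical point equation with $u_\lambda$ itself, combine with the energy via the standard $J_\lambda(u_\lambda)-\tfrac{1}{\theta}\langle J_\lambda'(u_\lambda),u_\lambda\rangle$ device, control the nonlinear terms with the Ambrosetti--Rabinowitz condition $(H_2)$, handle the potential term via $(H_3)$ and the Rayleigh-quotient characterization of $\lambda_1$, and close with the upper bound $J_\lambda(u_\lambda)\leqslant c_2\lambda^{-rp}$ from Theorem \ref{maintheo1}. If anything, your bookkeeping of the sign of the $V$-term (keeping the explicit factor $1-c_V/\lambda_1$ in front of $\|u_\lambda\|^p$) is slightly more careful than the paper's.
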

\begin{proof}[Proof:]
Let $u_\lambda$ be a critical point of $J_\lambda$ given by Theorem \ref{maintheo1}. Then, it follows from (\ref{cpdef}) that 
\begin{equation}
    \langle J_\lambda^{\prime}(u) ,\varphi\rangle =0,\quad\mbox{ for all }\varphi\in X.
    \label{cpdef1}
\end{equation}

Then, setting $\varphi=u_\lambda$ in (\ref{cpdef1}) and using (\ref{frechet1}), we get
\begin{equation}
    \|u_\lambda \|^p+\int_{\mathbb{R}^N} V(x)|u_\lambda|^p dx= \lambda \int_{\Omega} f(u_\lambda)u_\lambda  dx.
    \nonumber 
\end{equation}

It then follows from the Ambrosetti-Rabinowitz type condition in \eqref{ambcond} that 
\begin{eqnarray}
    \begin{aligned}
    \left( \frac{1}p-\frac{1}\theta \right) \|u_\lambda\|^p & =    \frac{1}p\|u_\lambda \|^p-\frac{1}\theta \left(\lambda \int_{\Omega} f(u_\lambda)u_\lambda  dx-\int_{\mathbb{R}^N} V(x)|u_\lambda|^p dx \right)\\
    & \leqslant    \frac{1}p\|u_\lambda \|^p-\frac{\lambda}\theta \left(\int_{\Omega}  \theta F(u_\lambda)  dx+K|\Omega| \right) +\frac{1}\theta \int_{\mathbb{R}^N} V(x)|u_\lambda|^p dx
    \\
    &\leqslant \frac{1}p\|u_\lambda \|^p-\lambda \int_{\Omega}   F(u_\lambda)  dx +\frac{1}p \int_{\mathbb{R}^N} V(x)|u_\lambda|^p dx
    -\frac{\lambda K}\theta |\Omega| \\
    & \leqslant J_\lambda (u_\lambda)+C\lambda^{-rp}; 
    \end{aligned}
    \nonumber 
\end{eqnarray}
so that, using (\ref{eslp1}) in Theorem \ref{existance-sol-mpt}, (\ref{reg1}) follows.
\end{proof}

Finally, we present a lower and upper estimates for 
 $\|u_\lambda\|_{\infty}$, where $u_\lambda$ is the critical point obtained in Theorem \ref{maintheo1}.  These results  will be used on the proof of comparison principle for problem (\ref{proba1}).

\begin{lemma}\label{linfty-estimate}
Assume that the hypotheses $(H_1)$-$(H_3)$ are satisfied. 
Let $u_\lambda$ be a weak solution of problem
(\ref{proba1}) obtained via Theorem \ref{existance-sol-mpt} and $\lambda_3$ be as in Lemma \ref{PS}. Then, there exists a constant $C$  such that, for all $0<\lambda<\lambda_3$, 
\begin{equation}
    C\lambda^{-r} \leqslant \|u_\lambda\|_\infty ,
    \label{uinftest}
\end{equation}    
and $r$ is given in (\ref{defr}).
\end{lemma}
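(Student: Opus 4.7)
The plan is to extract the desired lower bound from the already-established two-sided energy estimate for $u_\lambda$ combined with the critical point equation, by trading the energy for an $L^\infty$-controlled integral.

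First I would use the identity $\langle J_\lambda'(u_\lambda),u_\lambda\rangle=0$, which by \eqref{frechet1} gives
\begin{equation*}
\|u_\lambda\|^p+\int_\Omega V(x)|u_\lambda|^p\,dx=\lambda\int_\Omega f(u_\lambda)u_\lambda\,dx.
\end{equation*}
Substituting the left-hand side into the definition \eqref{funcdef1} of $J_\lambda(u_\lambda)$ eliminates the norm and potential terms and yields
\begin{equation*}
J_\lambda(u_\lambda)=\lambda\int_\Omega\Bigl[\tfrac{1}{p}f(u_\lambda)u_\lambda-F(u_\lambda)\Bigr]\,dx.
\end{equation*}
Combining this with the lower estimate $c_1\lambda^{-rp}\leqslant J_\lambda(u_\lambda)$ from Theorem \ref{existance-sol-mpt} produces the key inequality
\begin{equation*}
c_1\lambda^{-rp}\leqslant\lambda\int_\Omega\Bigl[\tfrac{1}{p}f(u_\lambda)u_\lambda-F(u_\lambda)\Bigr]\,dx.
\end{equation*}

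Next, I would establish the pointwise estimate
\begin{equation*}
\Bigl|\tfrac{1}{p}f(s)s-F(s)\Bigr|\leqslant C_\star(|s|^{q+1}+1),\qquad s\in\R,
\end{equation*}
for some constant $C_\star>0$ depending only on $p,q,A,B$. For $s>0$ this follows at once from \eqref{e11} and \eqref{condB1}; for $s\leqslant -1$ we have $f(s)=0$ and $F(s)=F(-1)$ is constant; for $-1<s\leqslant 0$ the continuity of $f$ on $[-1,0]$ gives a uniform bound. Inserting this into the previous display and using $|u_\lambda(x)|\leqslant\|u_\lambda\|_\infty$ yields
\begin{equation*}
c_1\lambda^{-rp}\leqslant C_\star\lambda|\Omega|\bigl(\|u_\lambda\|_\infty^{q+1}+1\bigr).
\end{equation*}

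Finally, I would absorb the constant term. Since $1+rp>0$, for all $\lambda$ small enough (say $\lambda<\lambda_4$ for an explicit $\lambda_4\leqslant\lambda_3$) we have $C_\star\lambda|\Omega|\leqslant\tfrac{c_1}{2}\lambda^{-rp}$, so
\begin{equation*}
\tfrac{c_1}{2}\lambda^{-rp}\leqslant C_\star|\Omega|\lambda\,\|u_\lambda\|_\infty^{q+1}.
\end{equation*}
The arithmetic identity $rp+1=r(q+1)$, obtained directly from the definition $r=1/(q+1-p)$ in \eqref{defr}, then yields $\|u_\lambda\|_\infty^{q+1}\geqslant C^{q+1}\lambda^{-r(q+1)}$, which gives \eqref{uinftest}.

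The only potentially delicate step is the uniform pointwise bound on $\tfrac{1}{p}f(s)s-F(s)$, since hypothesis $(H_1)$ explicitly controls $f$ only on $\{s>0\}\cup\{s\leqslant-1\}$; this is handled by the continuity of $f$ on the compact interval $[-1,0]$, after which the rest of the argument is an elementary rearrangement of already available energy estimates.
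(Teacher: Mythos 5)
Your proposal is correct and follows essentially the same route as the paper: both use the critical point identity $\langle J_\lambda'(u_\lambda),u_\lambda\rangle=0$ together with the lower bound $J_\lambda(u_\lambda)\geqslant c_1\lambda^{-rp}$ from Theorem \ref{existance-sol-mpt} to bound $\lambda\int_\Omega f(u_\lambda)u_\lambda\,dx$ from below by $C\lambda^{-rp}$, then the growth of $f$ to bound it above by a multiple of $\lambda\|u_\lambda\|_\infty^{q+1}$, and finally the exponent identity $1+rp=r(q+1)$. Your treatment is in fact slightly more careful than the paper's on two points it glosses over: the uniform control of $f$ on $(-1,0]$ (where $(H_1)$ is silent) and the absorption of the additive constant for small $\lambda$.
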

\begin{proof}[Proof:]
By virtue of the estimate in (\ref{eslp1}) for $J_\lambda (u_\lambda)$ in Theorem \ref{existance-sol-mpt}, and the fact that $\min F>-\infty $,  we get 
    \begin{eqnarray}\label{l-infty1}
        \begin{aligned}
            \lambda \int_\Omega f(u_\lambda)u_\lambda dx & = \|u_\lambda\|^p+\int_{\R^N} V(x)|u|^p dx \\
            &= pJ_\lambda (u_\lambda) dx + p\lambda \int_\Omega F(u_\lambda) dx \\
            &\geqslant pC\lambda ^{-rp}+p\lambda |\Omega| \min F \\
            &\geqslant C\lambda ^{-rp}.
        \end{aligned}
    \end{eqnarray}
    On the other hand, by virtue of the growth of $f$ in (\ref{e11}), we get 
    \begin{equation}\label{l-infty2}
            \lambda \int_\Omega f(u_\lambda)u_\lambda dx \leqslant B\lambda \|u_\lambda\|_\infty ^{q+1}.
    \end{equation}
   Combinining the estimates \eqref{l-infty1} and \eqref{l-infty2},  we obtain (\ref{uinftest}). 
\end{proof}

In the proof of the comparison principle, we will need the following regularity result found in Mosconi et.al\cite{PerYang}. 

 \begin{lemma}\cite[Lemma $2.3$]{PerYang}\label{lemma_esti_Linfty}
     Let $g\in L^t(\Omega)$, $N/(sp)<t\leqslant \infty$ and $u\in W_0^{1,p}(\Omega)$ be a weak solution of $(-\Delta)_s^p=g$ in $\Omega$. Then 
     \begin{equation}
         \|u\|_\infty \leqslant C\|g\|_t^{1/(p-1)}.
         \nonumber 
     \end{equation}
 \end{lemma}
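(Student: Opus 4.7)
The plan is to invoke the De Giorgi--Stampacchia iteration technique adapted to the nonlocal setting, which is the standard route to $L^\infty$ bounds for the fractional $p$-Laplacian. Since the statement is quoted verbatim from Mosconi--Perera--Squassina--Yang, the most economical option inside the paper itself is to defer to \cite{PerYang}; the outline below indicates how that proof proceeds.

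First, for each level $k > 0$, I would test the weak equation $(-\Delta)_p^s u = g$ against the truncation $u_k := (u-k)_+ \in W_0^{s,p}(\Omega)$. A short algebraic argument based on the monotonicity of $\Phi_p(t) = |t|^{p-2}t$, together with the elementary observation that whenever $u(x) > k \geq u(y)$ one has $u_k(x) - u_k(y) = u(x) - k \leq u(x) - u(y)$, yields the Caccioppoli-type inequality
\begin{equation*}
    [u_k]_{s,p}^{\,p} \;\leq\; \int_\Omega g(x)\,u_k(x)\,dx.
\end{equation*}
This is the fractional analogue of the classical truncation identity, and it is the step where the nonlocal character of $(-\Delta)_p^s$ must be handled carefully.

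Second, I would combine this estimate with the fractional Sobolev embedding $\|u_k\|_{p_s^*} \leq C\,[u_k]_{s,p}$ and bound the right-hand side via Hölder's inequality. Writing $A_k := \{u > k\} \cap \Omega$, the hypothesis $t > N/(sp)$ ensures that the exponents can be arranged so that
\begin{equation*}
    \int_\Omega g\, u_k\,dx \;\leq\; \|g\|_t\,\|u_k\|_{p_s^*}\,|A_k|^{\sigma},
\end{equation*}
with a strictly positive exponent $\sigma := 1 - 1/t - 1/p_s^*$. This positive gain in the measure of the superlevel set is the engine that drives the iteration.

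Finally, I would run the Stampacchia iteration over the geometric sequence of levels $k_j := K(1 - 2^{-j})$ and show that $\varphi(k) := \|u_k\|_{p_s^*}^{\,p}$ satisfies a recursion of the form $\varphi(k_{j+1}) \leq C\,2^{\gamma j}\,\varphi(k_j)^{1+\delta}$ for some $\delta > 0$. A standard lemma on such recursions forces $\varphi(k_j) \to 0$ provided $K$ is chosen as a universal constant times $\|g\|_t^{1/(p-1)}$, which yields $u \leq K$ a.e.\ in $\Omega$; applying the same reasoning to $-u$ produces the two-sided bound. The principal obstacle is entirely technical: ensuring the correct sign of the cross terms between $A_k$ and its complement in the nonlocal energy and verifying that the resulting exponent $\sigma$ is positive under the sharp condition $t > N/(sp)$.
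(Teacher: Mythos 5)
The paper offers no proof of this lemma---it is quoted verbatim from \cite[Lemma 2.3]{PerYang}---and your sketch reproduces the standard De Giorgi--Stampacchia truncation-and-iteration argument that the cited reference itself uses, including the correct nonlocal Caccioppoli step $[u_k]_{s,p}^p \leqslant \int_\Omega g\,u_k\,dx$ coming from $\Phi_p(a-b)(a_k-b_k)\geqslant |a_k-b_k|^p$. One small imprecision: for the iteration to close you need not merely $\sigma>0$ but the superlinearity $\sigma p_s^*/(p-1)>1$, which is exactly what $t>N/(sp)$ delivers; otherwise the outline is correct and matches the reference's approach.
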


The following theorem due to Ianizotto et. al \cite{fine} establishes a sharp boundary regularity result for the fractional $p-$Laplacian, for $p\geqslant 2.$  The assumption of $p\geqslant 2$ will allow us to obtain enough regularity up to the boundary of $\Omega$ to get a positive solution for problem (\ref{proba1}).   

\begin{theorem}\cite[Theorem $1.1$]{fine}\label{t4.1}
Let $p \geqslant 2$, $\Omega$ be a bounded domain with $C^{1,1}$ boundary and $\text{d}(x) = \text{dist}(x, \partial \Omega)$. There exist $\alpha \in (0,s)$ and $C>0$ depending on $N, \Omega, p$ and $s$, such that, for all $g \in L^{\infty}(\Omega)$, a weak solution $u \in W_0^{s,p}(\Omega)$ of problem 

\begin{equation}
\left\{\begin{aligned} 
    (-\Delta)_p^s(u) &=  g; \quad \mbox{ in }\Omega, \\
     u & = 0\quad\mbox{ in }\R^N \backslash \Omega ,
\end{aligned}\right.   
\nonumber 
\end{equation}
satisfies $u/\text{d}^s \in C^{\alpha}(\overline{\Omega})$ and 
\begin{equation}
    \left\|\frac{u}{\text{d}^s} \right\|_{C^{\alpha}(\overline{\Omega})} \leqslant C \|g\|_{\infty}^{\frac{1}{p-1}}.
    \nonumber 
\end{equation}
\end{theorem}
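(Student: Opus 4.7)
The plan is to follow the blow-up and barrier strategy that is by now standard for sharp boundary regularity of fractional $p$-harmonic functions in the nonlocal setting, combining interior Hölder estimates with explicit comparison functions built from the model behavior $d^s$.

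First I would establish a global $L^\infty$ bound on $u$: this is precisely the content of Lemma \ref{lemma_esti_Linfty} applied with $t=\infty$, yielding $\|u\|_\infty \leqslant C\|g\|_\infty^{1/(p-1)}$. Second, I would record interior Hölder regularity $u\in C^\beta_{\mathrm{loc}}(\Omega)$ for some $\beta\in(0,s)$, which follows from the nonlocal De Giorgi–Nash–Moser theory for the fractional $p$-Laplacian (Di Castro–Kuusi–Palatucci); this provides an interior regularity engine that the scaling argument below will feed off.

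The central ingredient is a barrier construction. Using the $C^{1,1}$ assumption on $\partial\Omega$, I would flatten the boundary locally and exploit the fact that in the model half-space $\{x_N>0\}$ the function $(x_N)_+^s$ is, up to a multiplicative constant, $(-\Delta)_p^s$-harmonic. Truncating this model and rescaling by a constant of order $\|g\|_\infty^{1/(p-1)}$ produces explicit super- and subsolutions of the form $\pm C\|g\|_\infty^{1/(p-1)} d(x)^s$ in a one-sided tubular neighborhood of $\partial\Omega$. The weak comparison principle for $(-\Delta)_p^s$, which is available in the range $p\geqslant 2$, then gives the sharp two-sided boundary growth
\[
|u(x)| \leqslant C\|g\|_\infty^{1/(p-1)}\, d(x)^s \qquad \text{near } \partial\Omega.
\]

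The main obstacle, and the step that consumes most of the work, is upgrading this growth bound to Hölder continuity of the ratio $u/d^s$ up to the boundary. I would implement a dyadic scaling argument: for $x_0\in\partial\Omega$ and $r_k=2^{-k}$, set $u_k(x) = r_k^{-s} u(x_0+r_k x)$, which satisfies a fractional $p$-Laplace equation with right-hand side of order $r_k^{s(p-1)}\|g\|_\infty$. The barrier estimate gives uniform bounds on $\|u_k\|_\infty$ on a fixed ball, and interior Hölder regularity together with a compactness/contradiction step forces the oscillation of $u/d^s$ on $B_{r_k}(x_0)$ to decay geometrically in $k$; summing this geometric series yields $u/d^s\in C^\alpha(\overline\Omega)$ with the stated quantitative bound. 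The principal technical difficulty here is controlling the nonlocal tails
\[
\int_{\R^N\setminus B_r}\frac{|u(y)|^{p-1}}{|y-x|^{N+sp}}\,dy
\]
under this rescaling; because the operator sees all of $\R^N$, tails can a priori blow up along the sequence. This is handled by combining the extension-by-zero condition $u\in W_0^{s,p}(\Omega)$ with the barrier bound $|u|\leqslant C d^s$, which together keep the tails uniformly bounded across scales and close the iteration.
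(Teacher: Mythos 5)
The first thing to say is that the paper does not prove this statement at all: Theorem \ref{t4.1} is quoted verbatim from \cite{fine} (Iannizzotto, Mosconi and Squassina) and is used purely as a black box in Lemma \ref{L_inf_bound-above} and in the proof of Theorem \ref{theo56}. So there is no in-paper argument to compare yours against; the only meaningful comparison is with the proof in the cited source.

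Measured against that, your outline reproduces the correct architecture: a global $L^\infty$ bound, interior H\"{o}lder regularity from the nonlocal De Giorgi--Nash--Moser theory, barriers modeled on the fact that $(x_N)_+^s$ is $(-\Delta)_p^s$-harmonic in the half-space so as to obtain $|u|\leqslant C\|g\|_\infty^{1/(p-1)}d^s$ by comparison, and finally an oscillation-decay argument for $u/d^s$ with tail control. Two caveats. First, a computational slip: under the rescaling $u_k(x)=r_k^{-s}u(x_0+r_kx)$ the right-hand side scales as $\kappa^{p-1}r_k^{sp}\,g=r_k^{s}\,g$, not $r_k^{s(p-1)}\,g$ (the two agree only when $p=2$); this does not derail the strategy, since both exponents are positive, but it matters when you try to quantify the resulting $\alpha$. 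Second, and more substantively, the step you describe as ``a compactness/contradiction step forces the oscillation of $u/d^s$ on $B_{r_k}(x_0)$ to decay geometrically'' is exactly where the entire difficulty of \cite{fine} is concentrated: for $p\neq 2$ one cannot subtract affine approximations, and the authors instead must prove a weak Harnack-type inequality for $u/d^s$ near the boundary, comparing $u$ with $k\,d^s$ for suitable constants $k$ and controlling the nonlocal tails of these differences. That is a long, delicate lemma and not an off-the-shelf compactness argument. As a roadmap your proposal is faithful to the known proof; as a proof it leaves the hardest ingredient asserted rather than established.
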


Finally,  we present the last result of this section that will be used to prove the existence of a positive solution for problem (\ref{proba1}). 

\begin{lemma}\label{L_inf_bound-above}
Assume that the hypotheses $(H_1)$-$(H_3)$ are satisfied. 
 Let $\lambda_{3} >0$ be as in Lemma \ref{PS}. Then, there exist $\alpha \in (0, s]$ and a constant $C>0$ such that, for all $0<\lambda <\lambda_3$, the solution  $u_\lambda$ given in Theorem \ref{existance-sol-mpt} of the problem \eqref{maintheo} satisfies $u_\lambda /d^s \in C^\alpha (\overline{\Omega})$. Furthermore 
 \begin{equation}
    \|u_\lambda\|_\infty \leqslant C \lambda^{-r}
    \nonumber 
\end{equation}
 and 
    \[ \bigg\| \frac{u_\lambda}{d ^s}\bigg\|_{C^{\alpha}(\overline{\Omega})} \leqslant C \lambda^{-r} . \]    
    and $r$ is given in (\ref{defr}).
\end{lemma}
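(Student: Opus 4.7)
The plan is to apply Lemma~\ref{lemma_esti_Linfty} (the Mosconi--Perera--Yang interior $L^\infty$ estimate) and Theorem~\ref{t4.1} (the Ianizzotto et al.\ sharp boundary regularity) to the equation solved by $u_\lambda$, while keeping careful track of the $\lambda$-dependence through the scaling exponent $r=1/(q+1-p)$ from \eqref{defr}. Observe first that the critical point $u_\lambda$ weakly solves $(-\Delta)_p^s u_\lambda = g_\lambda$ in $\Omega$ with $u_\lambda\equiv0$ on $\R^N\setminus\Omega$, where $g_\lambda:=\lambda f(u_\lambda)-V|u_\lambda|^{p-2}u_\lambda$. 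From $(H_1)$ and $(H_3)$,
\[|g_\lambda|\leq \lambda B(|u_\lambda|^q+1)+\|V\|_\infty |u_\lambda|^{p-1}.\]

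\textbf{Uniform $L^\infty$ bound.} The algebraic heart of the proof is the identity $1-rq = -r(p-1)$, which is a direct consequence of $r=1/(q+1-p)$. I would introduce the rescaled function $v_\lambda := \lambda^{r} u_\lambda$; by Lemma~\ref{norm-estimate}, $\|v_\lambda\|\leq c$ uniformly for $\lambda\in(0,\lambda_3)$, and a direct computation shows that $v_\lambda$ weakly satisfies
\[(-\Delta)_p^s v_\lambda = \lambda^{1+r(p-1)}\, f(\lambda^{-r}v_\lambda) - V|v_\lambda|^{p-2}v_\lambda =: \tilde g_\lambda.\]
Using $(H_1)$ together with the scaling identity and $\lambda<1$, one obtains $|\tilde g_\lambda| \leq C(|v_\lambda|^q+1)$ \emph{uniformly in $\lambda$}. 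A Moser iteration for the fractional $p$-Laplacian with subcritical right-hand side (bootstrapping via Lemma~\ref{lemma_esti_Linfty} from the uniform Sobolev control $\|v_\lambda\|_{p_s^*}\leq C$) then yields $\|v_\lambda\|_\infty\leq C$ independently of $\lambda$, and undoing the scaling gives $\|u_\lambda\|_\infty \leq C\lambda^{-r}$.

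\textbf{Boundary $C^\alpha$ bound.} With the pointwise bound $\|u_\lambda\|_\infty \leq C\lambda^{-r}$ in hand,
\[\|g_\lambda\|_\infty \leq \lambda B(\|u_\lambda\|_\infty^q+1) + \|V\|_\infty \|u_\lambda\|_\infty^{p-1} \leq C\bigl(\lambda^{1-rq}+\lambda+\lambda^{-r(p-1)}\bigr) \leq C\lambda^{-r(p-1)},\]
where the identity $1-rq=-r(p-1)$ and $\lambda<\lambda_3$ small are used in the last step. Theorem~\ref{t4.1} then produces $\alpha\in(0,s)$ and $C>0$ such that $u_\lambda/d^s\in C^\alpha(\overline\Omega)$ with
\[\left\|\frac{u_\lambda}{d^s}\right\|_{C^\alpha(\overline\Omega)} \leq C\|g_\lambda\|_\infty^{1/(p-1)} \leq C\lambda^{-r},\]
completing the proof.

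\textbf{Main obstacle.} The principal technical difficulty is the Moser iteration in the $L^\infty$ step: a single application of Lemma~\ref{lemma_esti_Linfty} to $v_\lambda$ demands $\tilde g_\lambda\in L^{t}$ for some $t>N/(sp)$, whereas a priori one only has $\tilde g_\lambda\in L^{p_s^*/q}$, which may fall below this threshold when $q$ is close to $p_s^*-1$. A finite bootstrap chain of $L^p$-to-$L^{p'}$ improvements is therefore required, and at every stage one must verify that the constants depend only on the structural data and not on $\lambda$. The rescaling $v_\lambda=\lambda^r u_\lambda$ is the essential device that makes this possible, converting the $\lambda$-singular nonlinearity into a uniformly subcritical one to which the standard regularity machinery applies.
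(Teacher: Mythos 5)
Your overall architecture matches the paper's: derive the $L^\infty$ bound from the Mosconi--Perera--Yang estimate (Lemma \ref{lemma_esti_Linfty}), then feed $g_\lambda\in L^\infty$ into Theorem \ref{t4.1} to get the weighted $C^\alpha$ bound, with the exponent bookkeeping $1-rq=-r(p-1)$ doing the work in both steps. Your closing computation of $\|g_\lambda\|_\infty^{1/(p-1)}\leqslant C\lambda^{-r}$ is exactly the paper's.

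The gap is in the $L^\infty$ step. The paper does \emph{not} perform any iteration: it invokes the additional restriction $q<\tfrac{sp}{N}p_s^*$ (equivalently $Nq/(sp)<p_s^*$), which is part of the hypotheses of Theorem \ref{maintheo2} even though it is not repeated in the lemma's statement. Under that restriction one can choose a single exponent $t>N/(sp)$ with $tq<p_s^*$ and $t(p-1)<p_s^*$, so the Sobolev embedding together with Lemma \ref{norm-estimate} gives $g_\lambda\in L^t(\Omega)$ with $\|g_\lambda\|_t\leqslant C(\lambda^{1-rq}+\lambda^{-r(p-1)})$, and one application of Lemma \ref{lemma_esti_Linfty} finishes the $L^\infty$ bound. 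You instead try to cover the full range $q<p_s^*-1$ by rescaling $v_\lambda=\lambda^r u_\lambda$ and running a Moser-type bootstrap, but the bootstrap cannot be assembled from the tools you cite: Lemma \ref{lemma_esti_Linfty} takes $g\in L^t$ with $t>N/(sp)$ directly to $u\in L^\infty$ and provides no intermediate integrability gain, so when $q$ is close to $p_s^*-1$ and $p_s^*/q\leqslant N/(sp)$ there is no admissible starting exponent and no chain of $L^m$-to-$L^{m'}$ improvements available from the stated results. You correctly flag this as the main obstacle, but you leave it unresolved; completing your route would require importing a separate Brezis--Kato-type iteration lemma for $(-\Delta)_p^s$. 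The fix consistent with the paper is simply to assume (as the paper implicitly does) $p-1<q<\min\{\tfrac{sp}{N}p_s^*,\,p_s^*-1\}$ and make the single choice of $t$ explicit; the rescaling $v_\lambda=\lambda^r u_\lambda$ is then unnecessary, since tracking powers of $\lambda$ directly through $\|u_\lambda\|\leqslant c\lambda^{-r}$ gives the same constants.
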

\begin{proof}[Proof:]
It follows from the assumption $Nq/(sp)<p_s^{*}$ that there exists $t>1$ such that $\frac{N}{sp}<t$  and $tq<p_s^*$, which implies $t(p-1)<p_s^*$. Set $g:=\lambda f\circ u_\lambda+V\Phi_p(u_\lambda).$ Since $W_0^{s,p}(\Omega) \hookrightarrow L^{tq}(\Omega)$ is a continuous embedding and  $|g|\leqslant A_1 \lambda (|u_\lambda|^{q}+1)+\|V\|_\infty |u_\lambda|^{p-1}$ we obtain
\begin{equation}
\begin{aligned}
    \int_{\Omega}|\lambda f(u_{\lambda})(x)+V\Phi_p(u_\lambda)|^t\,dx & \leqslant \lambda^t \int_{\Omega}|A_1(u_{\lambda}^q+1)|^t \,dx+\|V\|_\infty^t\int_{\Omega}|u_{\lambda}|^{t(p-1)}\,dx\\
    & \leqslant \lambda^t C \int_{\Omega}(|u_{\lambda}|^{qt}+1) \,dx+\|V\|_\infty ^t\int_{\Omega}|u_{\lambda}|^{t(p-1)}\,dx.
\end{aligned}
\nonumber 
\end{equation}
Hence, $g\in L^{t}(\Omega)$ and it follows from Lemma \ref{lemma_esti_Linfty} that 
\begin{equation}\label{eq1-lemma4}
    \|u_\lambda\|_\infty \leqslant \|g\|_t^{\frac{1}{p-1}} .\end{equation} 

On the other hand, by virtue of Lemma \ref{norm-estimate}, we have
\begin{equation}
\begin{aligned}
  \|g\|_t &\leqslant C_1\lambda \|u_\lambda\|_{tq}^q + C_2 \|u_\lambda\|_{t(p-1)}^{p-1}\\ &  \leqslant C_1\lambda \|u_\lambda\|^q+C_2  \|u_\lambda\|^{p-1}\\&  \leqslant    C(\lambda^{1-rq}+\lambda^{-r(p-1)}). 
\end{aligned}
\nonumber 
\end{equation}
Therefore, we obtain from \eqref{eq1-lemma4} and the fact $-r=(1-rq)/(p-1)$ that
\begin{equation}\label{upper-bound-u-infty}
    \|u_\lambda\|_\infty \leq \|g\|_t^{1/(p-1)}\leq C \lambda^{-r}.
\end{equation}

Thus, $u_\lambda \in L^{\infty}(\Omega)$ and then $g \in L^\infty(\Omega)$. Hence, by virtue of Theorem \ref{t4.1}, there exists $\alpha \in (0, s]$ and $C>0$, depending only on $N,p,s$ and $\Omega$, such that the solution $u_\lambda $ satisfies $u_\lambda /d  ^s \in C^\alpha (\overline{\Omega})$ and 
    \[\bigg\| \frac{u_\lambda}{d ^s}\bigg\|_{C^{\alpha}(\overline{\Omega})} \leqslant C\| g \|^\frac{1}{p-1}_{\infty} \leqslant \lambda^{-r}.\]
\end{proof}


\subsection{Existence of a positive solution}
To prove that the solution $u_\lambda$ found in Subsection  \ref{subsec1} is positive, we will list two results found in Del Pezzo et. al \cite{hopf} and one theorem due to Ianizzoto {\it et al.} \cite{global}, which will lead us to a comparison principle for the fractional $p$-Laplacian problem in (\ref{proba1}).   

First, we recall two basic definitions that will be used in this section for the reader's convenience. 

\begin{definition}
    {\rm
   Let $\Omega\subset\R^{N}$, $N\geqslant 1$, be an open set. We say that $x_o\in\partial\Omega$ satisfies the interior ball condition if there is $x\in\Omega$ and $r>0$ such that 
   $$B_r(x)\subset\Omega,\quad\mbox{ and }x_o\in\partial B_{r}(x),$$
where $B_{r}(x)=\{z\in\R^N: |z-x|<r\}.$
    
    }
\end{definition}

Next, we recall the concept of a function $u\in \widetilde{W}^{s,p}(\Omega)$ being a super-solution of the fractional $p$-Laplacian problem  (\ref{proba1}).

\begin{definition}
    Let $\Omega\subset \R^{N}$ be an open bounded set with $N\geqslant 1.$ We say that $u\in \widetilde{W}^{s,p}(\Omega)$ is a super-solution of (\ref{proba1}) if 
     \begin{eqnarray}
    \int_{\R^{2N}}\frac{\Phi_p(u(x)-u(y))(\varphi(x)-\varphi(y))}{|x-y|^{N+sp}} dx dy + \int_\Omega V(x)|u|^{p-2}u\varphi dx \geqslant \lambda\int_{\Omega}f(u)\varphi dx,\nonumber
\end{eqnarray}
for each $\varphi\in \widetilde{W}^{s,p}(\Omega).$
\label{supersoldef}
\end{definition}

The next two theorems due to Del Pezzo et. al \cite{hopf} will play a special role in the main result to be discussed in this section.

\begin{theorem}\cite[Theorem $1.4$]{hopf} \label{t4.2}
    Let $c \in C(\overline{\Omega})$ be a non-positive function and $u \in \widetilde{W}^{s,p}(\Omega) \cap C(\overline{\Omega})$ be a weak super-solution of 
\begin{equation}\label{e4.21}
        (-\Delta)_p^su = c(x)|u|^{p-2}u \quad \text{in}~~ \Omega.
    \end{equation}
 If $\Omega$ is bounded and $u \geqslant 0$ a.e. in $\mathbb{R}^N \backslash \Omega$, then either $u>0$ in $\Omega$ or $u=0$ a.e. in $\mathbb{R}^N$.
\end{theorem}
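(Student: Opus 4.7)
The statement is a strong minimum principle for nonnegative‐outside super-solutions of a nonlocal equation with non-positive zeroth-order coefficient. I would prove it in two stages: a weak minimum principle showing $u\geqslant 0$ throughout $\mathbb{R}^N$, followed by an interior positivity argument exploiting the nonlocality of $(-\Delta)_p^s$.

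\emph{Stage 1 (Weak minimum principle).} The plan is to test the super-solution inequality in (\ref{e4.21}) against $\varphi=u^{-}:=\max(-u,0)$. Since $u\geqslant 0$ a.e.\ in $\mathbb{R}^{N}\setminus\Omega$, one has $u^{-}\equiv 0$ outside $\Omega$, so $u^{-}$ is an admissible test function. Two algebraic identities do all the work. First, for every $a,b\in\mathbb{R}$,
\[
\Phi_p(a-b)\bigl(a^{-}-b^{-}\bigr)\leqslant -|a^{-}-b^{-}|^{p},
\]
which converts the nonlocal bilinear form into a negative Gagliardo seminorm of $u^{-}$. Second, $|u|^{p-2}u\cdot u^{-}=-(u^{-})^{p}$ pointwise. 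Inserting these into the super-solution inequality, together with $c\leqslant 0$, yields
\[
[u^{-}]_{s,p}^{p}\leqslant \int_{\Omega} c(x)\,(u^{-})^{p}\,dx\leqslant 0.
\]
Since $u^{-}\equiv 0$ outside $\Omega$ and its Gagliardo seminorm vanishes, $u^{-}\equiv 0$ in $\mathbb{R}^{N}$, hence $u\geqslant 0$ a.e.\ in $\mathbb{R}^{N}$.

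\emph{Stage 2 (Strong minimum principle).} Assume $u\not\equiv 0$ in $\mathbb{R}^N$. I would argue by contradiction: suppose there exists $x_{0}\in\Omega$ with $u(x_{0})=0$. By continuity, $x_{0}$ is a global minimum of the non-negative function $u$. The idea is to exploit the nonlocal structure: heuristically, at such a minimum,
\[
(-\Delta)_p^{s} u(x_{0})=-2\int_{\mathbb{R}^{N}} \frac{u(y)^{p-1}}{|x_{0}-y|^{N+sp}}\,dy<0,
\]
while the super-solution inequality forces $(-\Delta)_p^s u(x_0)\geqslant c(x_0)|u(x_0)|^{p-2}u(x_0)=0$, an immediate contradiction unless $u\equiv 0$.

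To make this rigorous for a merely weak super-solution, I would test against a non-negative bump $\varphi\in C_{c}^{\infty}(B_{\rho}(x_{0}))$ with $B_{2\rho}(x_{0})\subset\Omega$, $\varphi(x_{0})>0$, and split the double integral over $(\mathbb{R}^{N}\times\mathbb{R}^{N})$ into a near-field part (both variables in $B_{2\rho}$) and a far-field part. Since $u(x_{0})=0$ and $u$ is continuous, the near-field contribution can be made arbitrarily small as $\rho\to 0$, and the right-hand side $\int c(x) u^{p-1}\varphi\,dx$ is likewise $o(1)$. The far-field contribution, on the other hand, picks up a term of the form $-C\int_{B_{\rho}}\varphi(x)\int_{\mathbb{R}^{N}\setminus B_{2\rho}}\frac{u(y)^{p-1}}{|x-y|^{N+sp}}\,dy\,dx$, which is strictly negative and bounded away from zero provided $u$ is not identically zero outside $B_{2\rho}$. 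This contradicts the super-solution inequality.

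\emph{Main obstacle.} The weak minimum principle in Stage~1 is a routine computation; the delicate step is Stage~2, where the pointwise heuristic must be turned into an estimate on a weak formulation. The difficulty is that $u$ is only continuous and $\widetilde{W}^{s,p}$-regular, so the pointwise value $(-\Delta)_{p}^{s}u(x_{0})$ need not be defined. A cleaner and perhaps more robust alternative would be to invoke a known non-local Harnack inequality for non-negative super-solutions of the fractional $p$-Laplacian (of De Giorgi-Nash-Moser type, via the logarithmic lemma), which gives $\inf_{K}u>0$ on any compact $K\subset\Omega$ as soon as $u\not\equiv 0$; this would furnish the strong minimum principle without having to control the concentrated test-function computation directly.
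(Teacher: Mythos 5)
First, a point of orientation: the paper does not prove this statement at all — it is imported verbatim from Del Pezzo--Quaas \cite[Theorem 1.4]{hopf} and used as a black box in the positivity argument of Theorem \ref{theo56}. So there is no in-paper proof to compare against; I can only assess your proposal on its own terms. Your Stage 1 is correct and complete: the pointwise inequality $\Phi_p(a-b)(a^- - b^-)\leqslant -|a^- - b^-|^p$ holds in all four sign cases, $|u|^{p-2}u\,u^- = -(u^-)^p$, and since $u^-$ vanishes outside the bounded set $\Omega$, the vanishing of $[u^-]_{s,p}$ forces $u^-\equiv 0$.

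Stage 2 is where there is a genuine gap in the ``direct'' version you describe. After localizing with a bump $\varphi$ supported in $B_\rho(x_0)$, the negative far-field term you want to exploit has size of order $-A\rho^N$ with $A=c\int u(y)^{p-1}|x_0-y|^{-N-sp}\,dy>0$, but the positive error terms are not automatically smaller. Concretely, the part of the far field where $u(y)<u(x)$ contributes up to $C\bigl(\sup_{B_\rho}u\bigr)^{p-1}\rho^{N-sp}$, and for this to be $o(\rho^N)$ you need $u(x)=o\bigl(|x-x_0|^{sp/(p-1)}\bigr)$ near the touching point; similarly the near-field term is bounded by $[u]_{W^{s,p}(B_{2\rho})}^{p-1}\,[\varphi]_{s,p}$ with $[\varphi]_{s,p}\sim\rho^{(N-sp)/p}$, and the Gagliardo seminorm of $u$ on small balls tends to zero with no controllable rate. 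Mere continuity of $u$ with $u(x_0)=0$ gives no rate of vanishing, so the comparison between the negative main term and the positive remainders cannot be closed; a rate of the required strength is essentially the conclusion of a Hopf-type lemma and cannot be assumed. This is precisely why the literature (Del Pezzo--Quaas, and Brasco--Franzina before them) does not argue pointwise at the minimum but instead proves a logarithmic estimate for non-negative super-solutions, which yields a weak Harnack/strong minimum principle: if $u$ vanishes on a set of positive measure in a ball then $u\equiv 0$ there, and a chaining argument plus the nonlocal coupling between components finishes the proof. Your closing remark correctly identifies this as the robust route; to make the proposal into a proof you would need to adopt it (or supply the logarithmic lemma yourself), since the concentrated-test-function computation as sketched does not go through.
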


\begin{theorem}\cite[Theorem $1.5$]{hopf}\label{t4.3}
    Let $\Omega$ satisfy the interior ball condition at $x_0 \in \partial \Omega$, $c \in C(\overline{\Omega})$, and $u \in \widetilde{W}^{s,p}(\Omega) \cap C(\overline{\Omega})$ be a weak super-solution of \eqref{e4.21}. Suppose that $\Omega $ is bounded, $c(x) \leqslant 0$ in $\Omega$ and $u \geqslant 0$ a.e. in $\mathbb{R}^N \backslash\Omega$.  Then, either $u = 0$ a.e. in $\mathbb{R}^N$, or 
    \begin{equation}
        \liminf_{\substack{x
        \rightarrow x_0\\x\in B}} \frac{u(x)}{(d(x))^s}>0,
        \label{reftod}
    \end{equation}
    where $B \subseteq \Omega$ is an open ball in $\Omega$, such that $x_0 \in \partial B$, and $d$ is the distance from $x$ to $\mathbb{R}^N\backslash B$.
\end{theorem}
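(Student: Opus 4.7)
The strategy is the classical Hopf proof adapted to the nonlocal, nonlinear setting, built on top of the strong maximum principle already recorded as Theorem \ref{t4.2}. First, I would invoke Theorem \ref{t4.2} to reduce to the alternative $u > 0$ pointwise in $\Omega$. Under this assumption, it remains to prove the strict boundary growth \eqref{reftod}. Pick the interior ball $B = B_r(x_1) \subset \Omega$ granted by the interior ball condition, with $x_0 \in \partial B$. By continuity and strict positivity, there exists $m > 0$ such that $u \geq m$ on $\overline{B_{r/2}(x_1)}$, and since $c \in C(\overline{\Omega})$ and $\Omega$ is bounded there is a constant $M \geq 0$ with $c(x) \geq -M$ on $\overline{\Omega}$.

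The heart of the argument is the construction of a barrier $w$ on the half-annulus $A := B \setminus \overline{B_{r/2}(x_1)}$ with the following properties: (i) $w \leq m$ on $\overline{B_{r/2}(x_1)}$ and $w \leq 0$ in $\R^N \setminus B$ (so $w \leq u$ outside $A$); (ii) $w$ is a weak subsolution of the equation $(-\Delta)_p^s w = c(x)|w|^{p-2}w$ in $A$, equivalently $(-\Delta)_p^s w - c(x)|w|^{p-2}w \leq 0$; and (iii) $w(x) \geq \kappa\, d(x)^s$ for $x$ near $x_0$ inside $B$, for some $\kappa > 0$. A natural candidate (essentially the one used by Del Pezzo--Quaas and by Iannizzotto--Mosconi--Squassina in related contexts) is, up to a small multiplicative constant $\varepsilon>0$,
\[
w(x) := \varepsilon\bigl[(r^2 - |x-x_1|^2)_+\bigr]^{s/?}\cdot\chi_{B}(x),
\]
more precisely the radially symmetric $s$-power-type function that is known to behave like $d^s$ close to $\partial B$. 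The delicate computation is to verify that $(-\Delta)_p^s w$ is bounded above in $A$ uniformly, so that by choosing $\varepsilon$ small enough one can absorb both the term $c(x)|w|^{p-2}w$ (harmless since $|c| \leq M$ and $w$ is small) and the "outside" contribution coming from the nonlocal integral.

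Once the barrier is in place, I would apply a weak comparison principle for the fractional $p$-Laplacian on $A$: testing the inequality $\langle(-\Delta)_p^s u - (-\Delta)_p^s w,\varphi\rangle \geq \langle c(x)(|u|^{p-2}u - |w|^{p-2}w), \varphi\rangle$ against $\varphi = (w-u)_+$ (which vanishes outside $A$ by property (i)), together with the monotonicity inequality $(|a|^{p-2}a - |b|^{p-2}b)(a-b) \geq 0$ and the non-positivity of $c$, yields $(w-u)_+ \equiv 0$, i.e.\ $u \geq w$ in $A$. Combining this with property (iii) gives
\[
\liminf_{\substack{x\to x_0\\ x \in B}} \frac{u(x)}{d(x)^s} \;\geq\; \varepsilon\kappa \;>\;0,
\]
which is exactly \eqref{reftod}.

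The main obstacle will be step (ii), namely producing a subsolution with the correct boundary scaling $d^s$. The difficulty is twofold: the nonlocality of $(-\Delta)_p^s$ forces one to control the contribution of $w$ on all of $\R^N$ (not just on $A$), and the $p$-homogeneity means that comparison estimates do not linearize, so the absorption of $c(x)|w|^{p-2}w$ and of the lower-order error terms must be done by choosing $\varepsilon$ small in a way compatible with the subsolution inequality. After that delicate barrier estimate, the weak comparison step and the passage to the liminf are essentially routine and only require that $u \in C(\overline{\Omega})$ together with the already-established strict positivity in $\Omega$.
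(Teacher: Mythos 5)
This statement is quoted verbatim from Del Pezzo et al.\ \cite{hopf} (their Theorem 1.5); the paper itself offers no proof, so the only meaningful comparison is against the original source. Measured against that, your outline follows the right strategy: reduce to the alternative $u>0$ via the strong maximum principle (Theorem \ref{t4.2}), build a barrier scaling like $d^s$ on an inner annulus of the interior ball, and conclude by weak comparison. Your comparison step --- testing the difference of the super- and subsolution inequalities with $(w-u)_+$, using the monotonicity of $\Phi_p$ and the sign condition $c\leqslant 0$ --- is correctly set up and would go through once a barrier exists.

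However, there is a genuine gap, and you have located it yourself: the barrier is never constructed. You write $w(x)=\varepsilon\bigl[(r^2-|x-x_1|^2)_+\bigr]^{s/?}$ with an unresolved exponent, and you defer the verification of the subsolution inequality $(-\Delta)_p^s w - c(x)|w|^{p-2}w \leqslant 0$ on the annulus to a ``delicate computation'' that is not performed. For $p=2$ this step is easy because $(r^2-|x-x_1|^2)_+^s$ solves an explicit equation in the ball; for $p\ne 2$ no closed form exists, and the entire substance of the Hopf lemma in \cite{hopf} is the estimate that $(-\Delta)_p^s(d_B^s)$ is bounded above, in the appropriate weak sense, on the inner annulus $B\setminus \overline{B_{r/2}(x_1)}$ --- uniformly enough that both the zero-order term $c(x)|w|^{p-2}w$ and the nonlocal tail contribution from $\mathbb{R}^N\setminus B$ can be absorbed by taking $\varepsilon$ small. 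Without that lemma, property (iii) ($w\geqslant \kappa\, d^s$ near $x_0$) cannot even be stated precisely, and the comparison step has no subsolution to compare $u$ with. As written, the proposal is a correct plan for the proof, not a proof.
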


Next, we present a version of the comparison principle for problem (\ref{proba1}) motivated by a result due to Lindgren et.al \cite[Lemma $9$]{LindLind1} (see also Ianizzotto {\it et al.} in \cite[Proposition $2.10$]{global}).

\begin{theorem}\label{comparison-extended}
    Let $\Omega$ be a bounded set of $\R^N$, $N\geqslant 2$, and $u,v \in \widetilde{W}^{s,p}(\Omega)$ satisfy $u \leqslant  v$ in $\mathbb{R}^N \backslash \Omega$. Moreover, assume that
   \begin{equation} 
    \begin{aligned}\int_{\mathbb{R}^{2N}} &\frac{\Phi_p(u(x)-u(y))(\varphi(x)-\varphi(y))}{|x-y|^{N+sp}}\,dx\,dy +\int_{\R^N}V(x)\Phi_p(u) \varphi dx\\
        & \leqslant  \int_{\mathbb{R}^{2N}} \frac{\Phi_p(v(x)-v(y))(\varphi(x)-\varphi(y))}{|x-y|^{N+sp}}\,dx\,dy +\int_{\R^N}V(x)\Phi_p(v)\varphi dx,
    \end{aligned}
    \label{cpeq1}
    \end{equation}
    for all $\varphi \in W_0^{s,p}(\Omega)$, $\varphi \geqslant 0$ a.e in $\Omega$. If $V(x)\geqslant 0$ for a.e. $x\in\R^{N}$, then $u \leqslant v$ in $\Omega$.
\end{theorem}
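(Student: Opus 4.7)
The plan is to test the weak inequality with $\varphi = (u-v)_+ \in W_0^{s,p}(\Omega)$ and use the strict monotonicity of $\Phi_p$ on $\mathbb{R}$ to force $\varphi \equiv 0$. First I would check that $\varphi$ is an admissible test function: the hypothesis $u \leqslant v$ in $\mathbb{R}^N \setminus \Omega$ guarantees $\varphi = 0$ there, and the standard truncation argument in $\widetilde{W}^{s,p}(\Omega)$ yields $\varphi \in W_0^{s,p}(\Omega)$ with $\varphi \geqslant 0$.

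After plugging $\varphi$ into inequality (\ref{cpeq1}) and rearranging, the conclusion will follow from showing that
\begin{equation*}
\mathcal I := \int_{\mathbb{R}^{2N}} \frac{[\Phi_p(u(x){-}u(y)) - \Phi_p(v(x){-}v(y))][\varphi(x) - \varphi(y)]}{|x-y|^{N+sp}}\,dx\,dy \leqslant -\int_{\mathbb{R}^N} V(x)[\Phi_p(u)-\Phi_p(v)]\varphi\,dx,
\end{equation*}
while the right-hand side is $\leqslant 0$ and the left-hand side is $\geqslant 0$, with strict positivity whenever $\{u>v\}$ has positive measure. The potential term is easy: on $\{u \leqslant v\}$ we have $\varphi = 0$, while on $\{u > v\}$ strict monotonicity of $\Phi_p$ gives $(\Phi_p(u)-\Phi_p(v))\varphi \geqslant 0$, so together with $V \geqslant 0$ the right-hand side is non-positive.

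The heart of the proof is the sign analysis of the nonlocal integrand. Set $A = \{u > v\}$; since $u \leqslant v$ on $\mathbb{R}^N \setminus \Omega$, one has $A \subseteq \Omega$. I would split $\mathbb{R}^{2N}$ into the four pieces $A\times A$, $A\times A^c$, $A^c\times A$, $A^c\times A^c$ and verify case by case that the integrand is pointwise non-negative. On $A\times A$, both $\varphi$ values equal $u-v$, so the bracket product becomes $[\Phi_p(a)-\Phi_p(b)](a-b)$ with $a=u(x)-u(y)$, $b=v(x)-v(y)$, which is $\geqslant 0$ by monotonicity. On $A^c\times A^c$, $\varphi(x) = \varphi(y) = 0$. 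For $x \in A$, $y \in A^c$, the inequality $u(x) > v(x)$ combined with $u(y) \leqslant v(y)$ yields $u(x)-u(y) > v(x)-v(y)$, so strict monotonicity of $\Phi_p$ makes $\Phi_p(u(x)-u(y))-\Phi_p(v(x)-v(y)) > 0$; meanwhile $\varphi(x)-\varphi(y) = u(x)-v(x) > 0$, making the integrand strictly positive. The remaining cross term is symmetric.

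To conclude, combining the inequalities above forces $\mathcal I = 0$ and the pointwise non-negative integrand to vanish a.e. But on the cross set $A \times A^c$ the integrand is strictly positive wherever it is defined, hence $|A \times A^c| = |A|\cdot|A^c| = 0$. Since $\Omega$ is bounded, $|A^c| \geqslant |\mathbb{R}^N \setminus \Omega| = \infty > 0$, so $|A| = 0$, i.e.\ $u \leqslant v$ a.e.\ in $\Omega$. The main technical obstacle I anticipate is the precise justification that $(u-v)_+$ belongs to $W_0^{s,p}(\Omega)$ when $u,v$ are merely in $\widetilde{W}^{s,p}(\Omega)$; this uses the fact that truncation preserves the Gagliardo seminorm locally and that $\varphi$ vanishes outside $\Omega$, but one must verify integrability of the nonlocal integrand carefully, especially estimating cross contributions from the tail region using the finite tail condition built into the definition of $\widetilde{W}^{s,p}(\Omega)$.
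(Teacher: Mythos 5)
Your proof is correct and follows essentially the same strategy as the paper's: test \eqref{cpeq1} with $\varphi=(u-v)^{+}$, use the monotonicity of $\Phi_p$ together with $V\geqslant 0$ to show the potential term has the favorable sign, and conclude from the nonnegativity of the nonlocal term. The only difference is that you carry out in full the four-region sign analysis of the nonlocal integrand and the resulting measure-theoretic conclusion $|A|\cdot|A^{c}|=0$, which the paper omits by citing \cite[Lemma 9]{LindLind1}; this makes your write-up self-contained but does not change the underlying argument.
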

\begin{proof}[Proof:]
Set $\varphi=(u-v)^+$, where $(u-v)^{+}=\max\{u-v,0\}$ denotes the positive part of the function $u-v,$ in (\ref{cpeq1}) to get
\begin{eqnarray}\label{compar-1}
\begin{aligned}
&\int_{\R^N}V(x)(\Phi_p(u) -\Phi_p(v))(v-u)^+(x) dx\\
        & \leqslant \int_{\mathbb{R}^{2N}} \frac{(\Phi_p(v(x)-v(y))-\Phi_p(u(x)-u(y)))((v-u)^+(x)-(v-u)^+(y))}{|x-y|^{N+sp}}\,dx\,dy.
\end{aligned}    \end{eqnarray}

Using the identity found in \cite[page $809$]{LindLind1}, 
\begin{equation}
    \Phi_p (b)-\Phi_p (a)=(p-1)(b-a)\int_{0}^1 |a+t(b-a)|^{p-2}\, dt,
    \nonumber 
\end{equation}
with $b=u(x)$ and $a=v(x)$, we obtain the following estimate
\begin{eqnarray}
\begin{aligned}
0&\leqslant(p-1)(u(x)-v(x))(u-v)^+(x)\int_{0}^1 |v(x)+t(u(x)-v(x))|^{p-2}\, dt \\
&=  (\Phi_p (u(x))-\Phi_p (v(x)))(u-v)^+(x).
\end{aligned}
\end{eqnarray}
for a.e. $x\in \R^N$.

Hence, we conclude that the left hand side of \eqref{compar-1} is nonnegative.  The remaining proof of the theorem follows the same line of reasoning as in  \cite[Lemma $9$]{LindLind1} and we omit its proof. 
\end{proof}

Next, we proceed to show that the solution $u_\lambda$ found through Theorem \ref{maintheo1} is positive in $\Omega.$  

\begin{theorem} Assume that $p\geqslant 2$ and $V(x)\geqslant 0$ for a.e $x\in\Omega.$ If $p-1<q<\min\{\frac{sp}{N}p_s^*, \, p_s^*-1\}$, then there exists $\lambda^* >0$ such that, for all $0<\lambda < \lambda^*$, problem \eqref{proba1} has at least one positive solution $u_\lambda \in C^{\alpha}_o(\overline{\Omega})$ for some $0<\alpha < 1$.
\label{theo56}
\end{theorem}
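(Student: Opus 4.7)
The plan is to combine the regularity and a priori bounds of Lemmas \ref{norm-estimate}--\ref{L_inf_bound-above} with the fractional strong maximum principle (Theorem \ref{t4.2}) and the Hopf-type boundary estimate (Theorem \ref{t4.3}) to produce a strictly positive solution for $\lambda$ small enough. The $C^\alpha_0(\overline{\Omega})$ regularity is already furnished by Lemma \ref{L_inf_bound-above}, so the substance of the proof lies in establishing positivity.

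First I would show the a priori lower bound $u_\lambda \geqslant -1$ almost everywhere. Testing the weak formulation $\langle J_\lambda^{\prime}(u_\lambda), \varphi\rangle = 0$ with $\varphi = (u_\lambda + 1)^- \in W_0^{s,p}(\Omega)$ and using the translation-invariant pointwise inequality
\[
\Phi_p(a-b)\bigl((a+1)^- - (b+1)^-\bigr) \leqslant -\bigl|(a+1)^- - (b+1)^-\bigr|^p,\qquad a,b\in\R,
\]
(which reduces to the standard identity $\Phi_p(\tilde a - \tilde b)(\tilde a^- - \tilde b^-) \leqslant -|\tilde a^- - \tilde b^-|^p$ via $\tilde a = a+1$, $\tilde b = b+1$) bounds the Gagliardo term above by $-\|(u_\lambda + 1)^-\|^p$. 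On the support of $(u_\lambda + 1)^-$ one has $u_\lambda \leqslant -1$, so $\Phi_p(u_\lambda) \leqslant 0$; combined with $V \geqslant 0$, the potential term is nonpositive, while the forcing term vanishes by \eqref{e12}. The resulting inequality $\|(u_\lambda + 1)^-\|^p \leqslant 0$ forces $u_\lambda \geqslant -1$ a.e.

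Next, to upgrade to $u_\lambda \geqslant 0$, I would introduce a truncated nonlinearity $\tilde f \in C(\R)$ coinciding with $f$ on $[0,\infty)$, vanishing on $(-\infty,-1]$, and chosen nonnegative on $[-1,0]$; in the semipositone regime $f(0)<0$ this requires a small reshaping of $f$ on a neighborhood of the origin while preserving $(H_1)$--$(H_3)$ (with the constant $K$ in $(H_2)$ adjusted if necessary). The associated functional $\tilde J_\lambda$ has the same mountain-pass geometry and Palais--Smale condition as $J_\lambda$ in Section \ref{firstex}, yielding a critical point $\tilde u_\lambda$ with analogues of the a priori bounds in Lemmas \ref{norm-estimate}--\ref{L_inf_bound-above}. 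Testing the modified equation with $\varphi = \tilde u_\lambda^- \geqslant 0$ and invoking the pointwise inequality above makes the left-hand side at most $-\|\tilde u_\lambda^-\|^p \leqslant 0$, while the right-hand side $\lambda \int \tilde f(\tilde u_\lambda)\tilde u_\lambda^- \, dx$ is $\geqslant 0$ since $\tilde f \geqslant 0$ on $\{\tilde u_\lambda < 0\}$; the identity then forces $\tilde u_\lambda^- = 0$ a.e. Because $\tilde f = f$ on the positivity set, $\tilde u_\lambda$ is in fact a solution of \eqref{proba1}, which I relabel as $u_\lambda$.

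Finally, with $u_\lambda \geqslant 0$ and $u_\lambda \not\equiv 0$ by the lower bound $\|u_\lambda\|_\infty \geqslant C\lambda^{-r}$ of Lemma \ref{linfty-estimate}, I would apply Theorem \ref{t4.2} and Theorem \ref{t4.3}, absorbing the lower-order term using $V \geqslant 0$ to reformulate the equation as a super-solution inequality for $(-\Delta)_p^s u = c(x)|u|^{p-2}u$ with $c \leqslant 0$, and exploiting the interior ball condition at $\partial\Omega$ to conclude $u_\lambda > 0$ in $\Omega$. The main obstacle is the middle step, specifically the design of $\tilde f$ in the semipositone case so that both nonnegativity on $[-1,0]$ and the Ambrosetti--Rabinowitz condition $(H_2)$ are satisfied simultaneously; an alternative that avoids this modification is to use Theorem \ref{comparison-extended} to compare $u_\lambda$ directly with a suitable nonpositive supersolution constructed from the first eigenfunction of $((-\Delta)_p^s, W_0^{s,p}(\Omega))$ together with the estimates of Lemma \ref{norm-estimate} and Lemma \ref{linfty-estimate}.
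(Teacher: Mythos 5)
Your opening step (testing with $(u_\lambda+1)^-$ to obtain $u_\lambda\geqslant -1$) is sound, and you correctly identify Theorems \ref{t4.2} and \ref{t4.3} as the tools for strict positivity. The genuine gap is the middle step, which you yourself flag: hypothesis $(H_1)$ imposes no sign condition on $f$ over $(-1,0]$ and permits $f<0$ on $(0,1)$, so in the semipositone case $f(0)<0$ any continuous $\tilde f$ that is nonnegative on $[-1,0]$ must differ from $f$ at $0$ and on a right-neighbourhood of $0$. A nonnegative critical point of the truncated functional then solves the truncated equation, not \eqref{proba1}, on the set where it takes values in the modified range, so the relabelling at the end of your second step is not justified. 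Moreover, even where the truncation can be arranged, it produces a \emph{different} critical point from the mountain-pass solution $u_\lambda$ of Theorem \ref{existance-sol-mpt}; Theorem \ref{maintheo2} needs positivity of that specific solution, since the critical-group and multiplicity arguments of Section \ref{secproofmainres} are tied to it. The test-function argument with $\varphi=u_\lambda^-$ applied to the original equation requires $f\geqslant 0$ on the range of negative values of $u_\lambda$, which is exactly what fails in the semipositone regime; no local reshaping of $f$ removes this obstruction.

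The paper avoids truncation entirely by a blow-down contradiction argument as $\lambda\to 0$: assuming $|\{u_{\lambda_j}\leqslant 0\}|>0$ along $\lambda_j\to 0$, it normalizes $w_j=u_{\lambda_j}/\|u_{\lambda_j}\|_\infty$ and observes that the negative part of the nonlinearity is annihilated by the normalization, since $K_j=\lambda_j\|u_{\lambda_j}\|_\infty^{1-p}\min f\to 0$ by Lemmas \ref{linfty-estimate} and \ref{L_inf_bound-above}. Comparison (Theorem \ref{comparison-extended}) with the explicit subsolution $v_j=-(-K_j)^{1/(p-1)}v_0\to 0$ gives $w\geqslant 0$ for the limit $w$; Theorem \ref{t4.1} yields a uniform $C^\alpha$ bound on $w_j/d^s$, so $w_j/d^s\to w/d^s$ in $C^\beta(\overline{\Omega})$ by Arzel\`a--Ascoli; Theorems \ref{t4.2}--\ref{t4.3} applied to $w$, a supersolution of $(-\Delta)_p^s w+V\Phi_p(w)=0$, give $w>0$ in $\Omega$ together with $\liminf w/d^s>0$ at the boundary, and the $C^\beta$ convergence of the quotients transfers this to $w_j>0$ for large $j$ --- the contradiction. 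Your closing alternative (comparison with a nonpositive supersolution) is the right germ of this idea, but by itself it only yields $u_\lambda\geqslant -\varepsilon_\lambda$ for some $\varepsilon_\lambda>0$; without the normalization, the uniform boundary regularity of $w_j/d^s$, and the compactness step that lets the Hopf estimate for the limit pass back to $w_j$, you cannot upgrade such a one-sided bound to strict positivity.
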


\begin{proof}[Proof:]
 From Lemma \ref{existance-sol-mpt} we know that, for any $\lambda \in (0, \lambda_3)$, there exists a solution $u_\lambda \in X$.
Assume, by a way of contradiction, that there exists a sequence $(\lambda_j)_{j\in\N}$ with $0<\lambda_j<1$ such that $\lambda_j\rightarrow 0$ as $j\rightarrow\infty$ and, for all $j\in\N$, we have 
\begin{equation}
    |\Omega_j| > 0,
\end{equation}
where $\Omega_j=\{x\in\Omega|u_{\lambda_j}(x)\leqslant 0 \}$, for all $j\in\N$,  and $|\Omega_j|$ denotes the Lebesgue measure of the set $\Omega_j$.

Set $w_j=\frac{u_{\lambda_j}}{\|u_{\lambda_j}\|_\infty}.$ Notice that $w_j(x)\leqslant 0$ for all $x\in\Omega_j.$
Thus, by virtue of the regularity result in \cite[Theorem $1.1$]{global}, we get 
\begin{equation}
    (-\Delta)_p^s (w_j)=h_j(x, w_j),
    \nonumber 
\end{equation}
where 
$h_j(x,s):=-V(x)\Phi_p(s)+\lambda_j \|u_{\lambda_j}\|_\infty ^{1-p}f(\|u_{\lambda_j}\|_\infty s)$.

Using the fact $\lambda_j \|u_{\lambda_j}\|_\infty ^{1-p}<1$ and, by virtue of Lemma \ref{L_inf_bound-above}, $\lambda_j \|u_{\lambda_j}\|_\infty ^{q+1-p}<C$ for $j$ large, and $1-r(1-p+q)=0$, we get 
\begin{eqnarray}
\begin{aligned}
|h_j(x,s)|&\leqslant  |V(x)||s| ^{p-1}+\lambda_j \|u_{\lambda_j}\|_\infty ^{1-p}B((\|u_{\lambda_j}\|_\infty |s|)^{q}+1)\\
&\leqslant  \|V\|_\infty |s| ^{p-1}+B\lambda_j \|u_{\lambda_j}\|_\infty ^{1-p+q} |s|^{q}+B \lambda_j \|u_{\lambda_j}\|_\infty ^{1-p} \\
&\leqslant  \|V\|_\infty |s| ^{p-1}+B\lambda_j ^{1-r(1-p+q)} |s|^{q}+B \\
&\leqslant C_1 |s|^{p^*-1}+C_2.
\end{aligned}
\nonumber 
\end{eqnarray}

By virtue of Theorem \ref{t4.1}, there exists $\alpha \in (0,s]$ such that
\begin{eqnarray}
\begin{aligned}
    \bigg\|\frac{w_j}{d^s_\Omega }\bigg\|_{C^\alpha(\overline{\Omega})} & \leqslant \|h_j(x, w_j)\|^{1/(p-1)}_\infty \\
&\leqslant (C_1 \|w_j\|_\infty^{p^*-1}+C_2)^{1/(p-1)}\\
&= C_3,
 \end{aligned}
 \label{az1} 
\end{eqnarray}
where $C_3$ is a positive constant which does not depend on $\lambda_j$. 

Next, choose $\beta$ such that $0<\beta <\alpha$. By virtue of Arzel\`{a}-Ascoli Theorem (see \cite[Theorem 40 on pg. 169]{Royden}), up to a subsequence, it follows from (\ref{az1}) that 
$$\lim_{j\to \infty} \frac{w_j}{d^s_\Omega } = \frac{w}{d^s_\Omega },\quad \mbox{ in }   C^{\beta} (\overline{\Omega}).$$ 

The next step consists of using the comparison principle to prove that $w(x)\geqslant 0$. In fact, let $v_0\in W_0^{s,p}(\Omega)$ be a solution of 
	\begin{equation}  \left\{\begin{aligned} 
		(-\Delta)_p^su+V(x)\Phi_p(u) &=1, \quad \mbox{ in }\Omega;\\
		u&=0,   \quad\mbox{in $\R^N\backslash\Omega$},
	\end{aligned}\right.
 \nonumber 
 \end{equation}
obtained in Appendix \ref{appendix}.

Let $K_j=\frac{\lambda_j}{\|u_{\lambda_j}\|^{p-1}_{\infty}}\displaystyle\min_{t\in \mathbb{R}} f(t).$  Observe that $K_j<0$.
Next, let $v_j=-(-K_j)^{1/(p-1)}v_0$. Then $v_j$ solves
		\begin{equation} \left\{\begin{aligned}
			(-\Delta)_p^s u +V(x)\Phi_p(u)&=K_j, \quad\mbox{ in } \Omega;\\
			u&=0,  \quad\mbox{ in }\R^N\backslash\Omega.
		\end{aligned}\right.
  \nonumber 
  \end{equation}

Next, observe that, for all $\varphi \in W_0^{s,p}(\Omega)$ with $\varphi \geqslant 0$, we have the following estimate
\begin{equation}
\begin{split} &\int_{\mathbb{R}^{2N}}\frac{\Phi_p(w_j(x)-w_j(y))}{|x-y|^{N+sp}}(\varphi(x)-\varphi(y)) dx dy+\int_{\Omega}V(x)\Phi_p(w_j) \varphi dx\\
    &= \int_{\Omega}\lambda_jf(u_{\lambda_j})\|u_{\lambda_j}\|_{\infty}^{1-p} \varphi dx \\
    & \geqslant \int_{\Omega} K_j \varphi dx\\
    &= \int_{\mathbb{R}^{2N}} \frac{\Phi_p(v_j(x)-v_j(y))}{|x-y|^{N+sp}}(\varphi(x)-\varphi(y)) dx dy+\int_{\Omega}V(x)\Phi_p(v_j) \varphi dx.
    \end{split}   
    \label{eqff1}
\end{equation}

The estimate (\ref{eqff1}) implies that $(-\Delta)_p^s(w_j) \geqslant (-\Delta)_p^s(v_j)$. By virtue of the comparison principle stated in Theorem \ref{comparison-extended},  we conclude that  $w_j\geqslant v_j$. 
Since $v_j\to 0$, as $j\to \infty$, we obtain  $w (x)\geqslant 0$, for $x\in\Omega.$

Next, let $t:=Npr/(N-sp)>1$.  By virtue of Lemma \ref{linfty-estimate}) and Lemma \ref{L_inf_bound-above}, we have that 
$$C_1\lambda^{-r} \leqslant \|u_\lambda \|_\infty \leqslant C_2\lambda^{-r}.$$ 
Then, we get
\begin{equation}
    \begin{aligned}
     \lambda _j |f(u_{\lambda_j}(x))| \|u_{\lambda _j}\|_\infty ^{1-p} & \leqslant C\lambda _j( |u_{\lambda_j}(x)|^q+1) \|u_{\lambda _j}\|_\infty ^{1-p} \\
     & \leqslant C\lambda _j( \|u_{\lambda_j}\|_\infty ^q+1) \|u_{\lambda _j}\|_\infty ^{1-p} \\
     & \leqslant C\lambda_j (\lambda_j^{-rq}+1)\lambda_j^{r(p-1)} \\
     & \leqslant C\lambda_j \lambda_j^{-rq}\lambda_j^{r(p-1)} \\
     &= C\lambda_j^{1-rq+r(p-1)}\\ &=C,
     \end{aligned}
     \nonumber 
     \end{equation}
where $C$ is a positive constant and  $q<p_s^*-1$. 

Hence, it follows from the previous estimate that 
		\[\int_{\Omega} (\lambda _j f(u_j) \|u_{\lambda _j}\|_\infty ^{1-p})^t\, dx  \leqslant C_{ \Omega} | \Omega|.\]
		
  Thus, $\{\lambda _j f(u_j) \|u_{\lambda_j}\|_\infty ^{1-p}\}_{j}$ is bounded in $L^t(\Omega)$ and we may assume that it converges weakly in $L^t(\Omega)$. Let $z:=\lim_{j\rightharpoonup 0 } \lambda _j f(u_j) \|u_{\lambda_j}\|_\infty ^{1-p}$ be its weak limit.
		Since $f$ is bounded from below and  $\lim_{j \to \infty} \lambda _j \|u_{\lambda_j}\|_\infty ^{1-p} =0,$ it follows that $z \geqslant 0.$ 
  
  We claim that $(-\Delta)_p^s(w)=z$. In fact, by virtue of Lemmas \ref{norm-estimate} and \ref{linfty-estimate}, we can follow the same line of reasoning as in the proof of  \cite[Theorem $1.1$] {Lopera1}  to obtain
	\begin{align}\label{w-weak-sol}
			\begin{split}
	&			\lim_{j \to \infty} \int_{\mathbb{R}^{2N} }  \frac{|w_j(x)-w_j(y)|^{p-2}(w_j(x)-w_j(y))(\varphi (x)-\varphi (y))}{|x-y|^{N+sp}}  d x \, d y \\ 
    & = \int_{\mathbb{R}^{2N}  }  \frac{|w(x)-w(y)|^{p-2}(w(x)-w(y))(\varphi (x)-\varphi (y))}{|x-y|^{N+sp}}  d x \, d y .
			\end{split}
		\end{align}
  
  On the other hand, since  $w_j\to w$ uniformly in $\overline{\Omega}$ and $w \in L^p(\Omega)$, we also have that 
  \begin{equation}\label{w-weak-sol2}
      \lim_{j\to \infty} \int_\Omega V(x)\Phi_p(w_j)\varphi (x) dx = \int_\Omega V(x)\Phi_p(w)\varphi (x) dx. 
  \end{equation}
  Notice that $w_j\to w$ in $W_0^{s,p} (\Omega),$ which implies that  $w\in W_0^{s,p} (\Omega)$. Consequently, by virtue of \eqref{w-weak-sol}, \eqref{w-weak-sol2}, and the fact that $z$ is the weak limit of $\{\lambda_j f(u_{\lambda_j}\|u_{\lambda_j}\|_\infty^{1-p}\}$, we have that 
    $$(-\Delta)_p^s w+V\Phi_p(w)=z.$$
   That is, $w$ is a weak supersolution of $(-\Delta)_p^s w+V\Phi_p(w)=0$. Hence, by virtue of Theorem \ref{t4.2} and Theorem \ref{t4.3}, we have two alternatives:  First, $w=0$ cannot hold since $w_j\to w$ in $C^{\beta}(\overline{\Omega})$ and $\|w_j\|_\infty=1$ for all $j\in\N$. Second,  $w>0$ in $\Omega$ and,  for all $x_0 \in \partial \Omega$, 
		\begin{equation} 
		\liminf_{\substack{x
        \rightarrow x_0\\x\in B}} 
        \frac{w(x)}{(d(x))^s}>0 ,
  \nonumber\end{equation}
  where $B \subseteq \Omega$ is an open ball in $\Omega$, such that $x_0 \in \partial B$, and $d(x)$ is the distance from $x$ to $\mathbb{R}^N\backslash B$ 
  (see (\ref{reftod})). 
		
Therefore, there exist $j_o$ sufficiently large such that , for all $j\geqslant j_o$, we have $w_j>0$.  But this contradicts the fact that $w_j(x)=\frac{u_{\lambda_j}(x)}{\|u_{\lambda_j\|_{\infty}}} \leqslant 0$, for $x\in\Omega_{j}$. 

Hence, $|\Omega_j|=0$ for all $j\in\N$ and we conclude that problem (\ref{proba1}) has at least one positive solution $u_\lambda \in C^{\alpha}_0(\overline{\Omega})$, for some $0<\alpha < 1$.
\end{proof}

\section{Computation of the Critical Groups at Infinity}\label{secex}

In this section, we will obtain the first multiplicity result for problem (\ref{proba1}). The first step will consist of computing the critical groups of $J_\lambda$ at infinity as defined in (\ref{cinfdef}). This will require to use the concept of two topological spaces being homotopically equivalent.  

To show that two topological spaces $A$ and $B$ are homotopically equivalent, denoted by $A \cong B$,  one needs to show that there exist functions $\eta:A\rightarrow B$ and $i:B\rightarrow A$ such that $\eta\circ i\approx id_B$ and $i\circ \eta \approx id_A$, where $id$ denotes the identity function and the symbol $\approx$ denotes the existence of a homotopy.  

In particular, if $B\subset A$ and $i:B\rightarrow A$ denotes the inclusion function and $\eta: A\rightarrow B$ is a deformation retraction from $A$ onto $B$, then we have that $\eta \circ i \approx id_{B}$ and $i\circ\eta = id_{A}$. Hence, to obtain the critical groups of $J_\lambda$ at infinity,  we will prove the existence of a deformation retract from $J_\lambda^{-M}$ onto $S^{\infty}$, for some $M$ to be chosen soon, where $S^{\infty}$ denotes the unit sphere in $X$. Finally, the result will follow by using an argument with the long exact sequence of the topological pair $(X,J^{-M})$ and the fact that $S^{\infty}$ is contractible in $X$.

Let $S^{\infty}=\{u\in X:\|u\|=1\}$ be the unit sphere in $X$. Notice that, for $u\in S^{\infty}$, we have that 
\begin{equation}
    \lim_{t\rightarrow\infty}J_\lambda(tu)=-\infty.
    \label{jinf1}
\end{equation}
In fact, substituting (\ref{condA1}) into (\ref{funcdef1}) and applying hypothesis $(H_3)$, we obtain
\begin{equation}
    J_{\lambda}(tu) \leqslant \frac{t^{p}}{p}\left(1+\|V\|_{L^{\infty}}\|u\|_{p}^{p}\right)  - \lambda A_{1}t^{q+1}\|u\|_{q+1}^{q+1} + \lambda A_{1}C_{1}|\Omega|,
    \label{e14}
\end{equation}
for all $u\in S^{\infty}$. Then, since $p<q+1$, the result (\ref{jinf1}) follows by letting $t\rightarrow\infty$ in (\ref{e14}). 
 
\begin{lemma}
    Assume that the hypotheses $(H_1)-(H_2)$ are satisfied. Then, there exists $\widetilde{M}>0$ such that, for all $M  \geqslant \widetilde{M}$, $J_\lambda^{-M}$ is homotopically equivalent to $S^{\infty}$. 
\end{lemma}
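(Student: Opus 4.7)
The plan is to deformation-retract $J_\lambda^{-M}$ onto the level set $\Sigma_M := J_\lambda^{-1}(-M)$ along rays through the origin, and then identify $\Sigma_M$ with $S^\infty$ via radial projection. The whole argument rests on one key estimate: on the sublevel set $J_\lambda^{-M}$, the ``radial derivative'' $\langle J'_\lambda(u), u \rangle$ is strictly negative, which lets rays cross the level $-M$ exactly once and lets us flow inward along rays without leaving $J_\lambda^{-M}$.

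The central computation combines $J_\lambda$ and its derivative in the style of the Ambrosetti--Rabinowitz manipulation:
\begin{equation*}
\langle J'_\lambda(u), u \rangle - \theta J_\lambda(u) = \left(1 - \frac{\theta}{p}\right)\!\left(\|u\|^p + \int_\Omega V(x)|u|^p\, dx\right) - \lambda \int_\Omega \bigl[f(u)u - \theta F(u)\bigr]\, dx.
\end{equation*}
By $(H_2)$ the last integral is $\geq K|\Omega|$; by $(H_3)$ together with the variational characterization of $\lambda_1$ in \eqref{lambda1Dfn}, the parenthesis is $\geq (1 - c_V/\lambda_1)\|u\|^p \geq 0$; and since $\theta > p$, the prefactor is negative. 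Hence
\begin{equation*}
\langle J'_\lambda(u), u \rangle \leq \theta\, J_\lambda(u) - \lambda K |\Omega|,
\end{equation*}
so choosing $\widetilde{M} > \max\{0, -\lambda K|\Omega|/\theta\}$ gives $\langle J'_\lambda(u), u \rangle < 0$ on $J_\lambda^{-M}$ for every $M \geq \widetilde{M}$.

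With this in hand, for each $v \in S^\infty$ the function $\varphi_v(t) := J_\lambda(tv)$ satisfies $\varphi_v(0)=0$, $\varphi_v(t)\to-\infty$ by \eqref{jinf1}, and has strictly negative derivative wherever $\varphi_v(t) \leq -M$. Therefore there is a unique $T(v) > 0$ with $\varphi_v(T(v)) = -M$; the implicit function theorem applied to $(v,t) \mapsto J_\lambda(tv) + M$ shows $T \in C(S^\infty, (0,\infty))$; and $\varphi_v(t) \leq -M$ iff $t \geq T(v)$. In particular every $u \in J_\lambda^{-M}$ is nonzero and satisfies $\|u\| \geq T(u/\|u\|)$. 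Setting
\begin{equation*}
H(u, s) = \bigl[(1-s)\|u\| + s\,T(u/\|u\|)\bigr] \cdot \frac{u}{\|u\|}, \qquad (u,s) \in J_\lambda^{-M} \times [0,1],
\end{equation*}
the monotonicity keeps the image inside $J_\lambda^{-M}$, and $H$ is a strong deformation retraction onto $\Sigma_M$. Finally, $v \mapsto T(v)v$ is a homeomorphism $S^\infty \to \Sigma_M$ with inverse $u \mapsto u/\|u\|$, so $J_\lambda^{-M} \simeq \Sigma_M \cong S^\infty$.

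The genuinely delicate step is the radial inequality in the second paragraph: it is here that the AR condition $(H_2)$ and the smallness assumption $c_V < \lambda_1$ in $(H_3)$ must cooperate to beat the $\theta > p$ sign flip. Everything afterwards (IFT, straight-line homotopy along rays, radial homeomorphism with $S^\infty$) is routine once the monotonicity is secured.
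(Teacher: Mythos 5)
Your proposal is correct and follows essentially the same route as the paper: negativity of the radial derivative $\frac{d}{dt}J_\lambda(tv)$ on the sublevel set (obtained from the Ambrosetti--Rabinowitz condition), the implicit function theorem for continuity of the crossing time $T$, and a straight-line radial deformation onto a set homeomorphic to $S^{\infty}$. The one substantive difference is in your favor: by casting the estimate as $\langle J_\lambda^{\prime}(u),u\rangle \leqslant \theta J_\lambda(u)-\lambda K|\Omega|$ you get the monotonicity for every $\lambda>0$ once $M$ is large enough, whereas the paper's version of the same computation leaves a residual term $\hat{K}\lambda$ and must additionally assume $\lambda$ small.
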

\begin{proof}[Proof:]

We will follow a line of reasoning similar to that presented by Wang in \cite[Section $3$]{Wang} to show  the existence of a deformation retract from $J_\lambda^{-M}$ onto $S^{\infty}$.

First, notice that the critical value set $J_\lambda(\mathcal{K})$ is bounded from below. In fact, if $u_0\in\mathcal{K},$ then setting  $\varphi=u_0$ in (\ref{frechet1}), we get 
\begin{equation}
    \|u_0\|^{p} + \int_{\Omega}V(x)|u_0|^{p}\,dx =\lambda\int_{\Omega}f(u_0)u_0\,dx.
    \label{cond1}
\end{equation}
Next, substitute (\ref{cond1}) into  (\ref{funcdef1}) and use hypothesis $(H_2)$ to obtain 
 \begin{equation}
\begin{aligned} 
    J_{\lambda}(u_0) & = \frac{\lambda}{p}\int_{\Omega}\left[f(u_0)u_0-pF(u_0)\right]\,dx\\
    &\geqslant \frac{\lambda}{p}\int_{\Omega}\left[(\theta-p)F(u_0)+K\right]\,dx\\
    &\geqslant \frac{\lambda}{p}|\Omega|((\theta-p)\min F +K)=:-a_0,
    \end{aligned} 
    \nonumber 
\end{equation} 
for all $u_0\in\mathcal{K},$ and therefore
\begin{equation}
    -a_o \leqslant \inf J_{\lambda}(\mathcal{K}).
    \label{aodef}
\end{equation}

By virtue of (\ref{jinf1}), given $u\in S^{\infty}$ and $M_1>0$, there exists $t_0=t_0(u)\geqslant 1$ such that,  
\begin{equation}
    J_\lambda (tu) < -M_1,\quad\mbox{ for }t_{0}\geqslant 1, u\in S^{\infty}.
    \nonumber 
\end{equation}
 Define $\widetilde{M}=\min\{-a_o,-M_1\}$. Then, choose $M_2>\widetilde{M}$ such that, for $tu\in J_\lambda^{-M_2}$, we have
\begin{equation}
    J_\lambda(tu) = \frac{t^p}{p}\left(1+\int_{\Omega}V(x)|u|^{p}\,dx\right) - \lambda\int_{\Omega}F(tu)\,dx,
    \label{eq45}
\end{equation}
for $t\geqslant 1$.

Using the chain rule, and taking into account the fact that $f(s)s$ is bounded from below, and $\dfrac{p}{\theta}<1$, it follows from (\ref{eq45}) and hypothesis $(H_2)$ that 
 \begin{equation}
    \begin{aligned}
        \frac{d}{dt}J_\lambda(tu)  
                & = \frac{1}{t}\left[pJ(tu) + \lambda\int_{\Omega}(pF(tu)-f(tu)tu)\,dx\right]\\
&\leqslant \frac{1}{t}\left[-pM_2 + \lambda\int_{\Omega}\left(\frac{p}{\theta}f(tu)(tu)-f(tu)tu-\frac{Kp}{\theta}\right)\,dx\right]
\\
&\leqslant \frac{1}{t}\left[-pM_2 + \lambda\left(\frac{p}{\theta}-1\right)\int_{\Omega}f(tu)(tu))\,dx-\lambda\frac{Kp}{\theta}|\Omega|\right]
\\
&\leqslant \frac{1}{t}\left[-M_{o}+\hat{K}\lambda\right], 
    \end{aligned}
        \label{df111}
\end{equation}
where $M_o$ and $\hat{K}$ are positive constants, for all $tu\in  J_\lambda^{-M_2}.$ 

Choosing $\lambda$ small enough in (\ref{df111}), we get
\begin{equation}
    \frac{d}{dt}J_\lambda(tu) < 0,
    \label{df1}
\end{equation}
for $tu\in J_\lambda^{-M_2},$ and $t\geqslant 1.$

 Let us take $M\geqslant\widetilde{M}$. Then, combining (\ref{jinf1}) and (\ref{df1}), we can invoke the intermediate value theorem to conclude that there exists $T(u)\geqslant 1$ such that 
\begin{equation}
    J_\lambda(T(u)u)=-M,\quad\mbox{ for }u\in S^{\infty}.
    \nonumber 
\end{equation}
  
It also follows from the implicit function theorem \cite[Theorem $15.1$]{Deim1} that $T\in C(S^{\infty},\R)$. 

Finally, let $B^{\infty}=\{u\in X:\|u\|\leqslant 1\}$ be the unit ball in $X$. Define $\eta:[0,1]\times (X\backslash B^{\infty})\rightarrow X\backslash B^{\infty}$ by
\begin{eqnarray}
    \eta(t,u) = (1-t)u + tT(u)u,
    \nonumber 
\end{eqnarray}
for $t\in[0,1]$ and $u\in X\backslash B^{\infty}$. Observe that $\eta(0,u)=u$ and $\eta(1,u)\in J_\lambda^{-M}.$ Then, $\eta$ is a deformation retract from $X\backslash B^{\infty}$ onto $J_{\lambda}^{-M}$. Since $X\backslash B^{\infty}\cong S^{\infty}$, then we conclude that 
$$J_\lambda^{-M}\cong X\backslash B^{\infty}\cong S^{\infty};$$
that is, $J_{\lambda}^{-M}$ is homotopically equivalent to $S^{\infty}$.

\end{proof}

Since $J_\lambda^{-M}$ and $S^{\infty}$ are homotopically equivalent, as shown in the previous lemma, we conclude that the homology groups $H_{k}(J_\lambda^{-M})$ and $H_{k}(S^{\infty})$ are isomorphic, for all $k\in\Z$ (see \cite[Corollary $2.11$]{AH}). Since $S^{\infty}$ is also contractible in $X$ (see Benyamini-Sternfeld  \cite{BS1}), we obtain  that the singular homology groups $H_{k}(J_\lambda^{-M})$ have the homology type of a point for all $k\in\Z$; namely,
\begin{equation}
    H_{k}(J_\lambda^{-M})\cong\delta_{k,0}\F,\quad\mbox{ for all }k\in\Z. 
    \nonumber 
\end{equation}
Using an argument similar to that in \cite[Section $3$]{RecRumb5} with the long exact sequence of reduced homology groups of the topological pair $(X,J_\lambda^{-M})$ and the fact that $J_\lambda$ satisfies the Palais-Smale condition shown in Lemma \ref{PS}, we conclude that the critical groups of $J_\lambda$ at infinity are given by
\begin{equation}
    C_{k}(J_\lambda,\infty)=H_{k}(X,J_\lambda^{-M})\cong\delta_{k,0}\F,\quad\mbox{ for all }k\in\Z.
    \label{cinftyg}
\end{equation}

\section{Computation of the Critical Groups at the Origin}\label{SecCriticalGroups}

In this section, we study the questions of existence and multiplicity for the case $f(0)=0$. In this case, the function $u\equiv 0$ is also a critical point of $J_\lambda$ and we need to obtain some information about the critical groups of $J_\lambda$ at the origin. To obtain another solution, we will need to make an additional assumption about the behavior of $F$ at the origin. This is the content of the next lemma.  

\begin{lemma}
    Assume that the nonlinearity $f$ satisfies $(H_1)$ and its primitive $F$ satisfies the condition
    \begin{equation}
        \limsup_{s\rightarrow 0} \frac{F(s)}{|s|^{p}}=0. 
        \label{limsupF}
    \end{equation}
    Then, the origin is a local minimizer of the functional $J_\lambda$ and its critical groups are given by
    \begin{equation}
        C_{k}(J_\lambda,0)\cong\delta_{k,0}\F,\quad\mbox{ for all }k\in\Z.
        \label{coriging}
    \end{equation}
\end{lemma}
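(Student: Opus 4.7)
The plan is to show that the origin is a strict local minimizer of $J_\lambda$ in $X$, from which (\ref{coriging}) follows immediately from the definition of critical groups.

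First, I would combine the growth hypothesis $(H_1)$ with the assumption (\ref{limsupF}) to obtain a two-regime upper bound on $F$. By (\ref{limsupF}), for any $\varepsilon>0$ there exists $\delta>0$ such that $F(s)\leqslant \varepsilon |s|^p$ for all $|s|<\delta$. For $|s|\geqslant\delta$, the estimate (\ref{condB1}) gives $F(s)\leqslant B_1(|s|^{q+1}+1)\leqslant C_\varepsilon |s|^{q+1}$, where $C_\varepsilon>0$ depends on $\delta$ and hence on $\varepsilon$. Combining these two regimes yields
\begin{equation}
F(s)\leqslant \varepsilon |s|^p + C_\varepsilon |s|^{q+1},\qquad\text{for all }s\in\mathbb{R}.\nonumber
\end{equation}

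Next, I would plug this estimate into the definition of $J_\lambda$ in (\ref{funcdef1}). Using hypothesis $(H_3)$ together with the variational characterization of $\lambda_1$ in (\ref{lambda1Dfn}) gives
\begin{equation}
\int_\Omega V(x)|u|^p\,dx \geqslant -c_V\int_\Omega |u|^p\,dx\geqslant -\frac{c_V}{\lambda_1}\|u\|^p.\nonumber
\end{equation}
Using the Sobolev embedding constants $C_p$ and $C_{q+1}$ from (\ref{condd1}), I obtain
\begin{equation}
J_\lambda(u)\geqslant \frac{1}{p}\left(1-\frac{c_V}{\lambda_1}\right)\|u\|^p - \lambda\varepsilon\,C_p^p\,\|u\|^p - \lambda C_\varepsilon\,C_{q+1}^{q+1}\,\|u\|^{q+1},\nonumber
\end{equation}
for all $u\in X$. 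Since $1-c_V/\lambda_1>0$ by $(H_3)$, I would now choose $\varepsilon>0$ so small that
\begin{equation}
\frac{1}{p}\left(1-\frac{c_V}{\lambda_1}\right) - \lambda\varepsilon\,C_p^p \;\geqslant\; \frac{1}{2p}\left(1-\frac{c_V}{\lambda_1}\right) \;=:\; \alpha>0,\nonumber
\end{equation}
which gives $J_\lambda(u)\geqslant \alpha\|u\|^p - \beta\|u\|^{q+1}$ with $\beta:=\lambda C_\varepsilon C_{q+1}^{q+1}$.

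Since $q+1>p$ by $(H_1)$, there exists $\rho>0$ such that $\alpha\rho^p - \beta\rho^{q+1}>0$, and shrinking $\rho$ if necessary we obtain $J_\lambda(u)>0=J_\lambda(0)$ for all $u\in X$ with $0<\|u\|\leqslant \rho$. Thus $0$ is a strict local minimizer of $J_\lambda$. Finally, taking $U=\{u\in X:\|u\|<\rho\}$ and $c_0=J_\lambda(0)=0$, the sublevel set $J_\lambda^{c_0}\cap U$ reduces to $\{0\}$ and $J_\lambda^{c_0}\cap U\setminus\{0\}=\emptyset$, so by the definition of critical groups
\begin{equation}
C_k(J_\lambda,0)=H_k(\{0\},\emptyset)\cong\delta_{k,0}\mathbb{F},\qquad\text{for all }k\in\mathbb{Z}.\nonumber
\end{equation}
The only mild subtlety is handling the indefinite sign of $V$, which is absorbed cleanly by the smallness condition $c_V<\lambda_1$ in $(H_3)$; no sharper analysis of the nonlocal operator is required.
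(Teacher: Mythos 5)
Your proof is correct and follows essentially the same route as the paper: the two-regime bound $F(s)\leqslant \varepsilon|s|^p+C_\varepsilon|s|^{q+1}$, the Sobolev embedding, and the Rayleigh-quotient estimate $\|u\|_p^p\leqslant \|u\|^p/\lambda_1$ to absorb the potential term via $c_V<\lambda_1$. The only cosmetic differences are that the paper absorbs the $\|u\|^{q+1}$ term by choosing $\rho=(\varepsilon/2K_1)^{1/(q+1-p)}$ rather than invoking $q+1>p$ at the end, and it cites Chang for the critical groups of a local minimizer where you compute $H_k(\{0\},\emptyset)$ directly; both are fine.
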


\begin{proof}[Proof:]
By condition (\ref{limsupF}), given any $\varepsilon > 0$, there exists a $\delta>0$ such that 
\begin{equation}
    |s|<\delta \Rightarrow F(s) < \varepsilon|s|^p.
    \label{ineq1}
\end{equation}
It follows from (\ref{condB1}) that there exists a constant $K_1=K_1(\delta)$ such that 
\begin{equation}
    |F(s)|\leqslant K_{1}|s|^{q+1},\quad\mbox{ for all }|s|\geqslant\delta.
    \label{ineq2}
\end{equation}
In fact, assume $s\geqslant\delta$ and use hypothesis $(H_1)$ to get 
\begin{equation}
    |F(s)|\leqslant\int_{0}^{s}|f(\xi)|\,d\xi \leqslant Bs + \frac{B}{q+1}s^{q+1};
    \nonumber 
\end{equation}
so that 
\begin{equation}
    |F(s)|\leqslant B\left[\delta\left(\frac{s}{\delta}\right)+\frac{\delta^{q+1}}{q+1}\left(\frac{s}{\delta}\right)^{q+1}\right].
    \label{ineq4}
\end{equation}
Since we are assuming that $s\geqslant\delta$; so that $\frac{s}{\delta}\geqslant 1$; it follows from (\ref{ineq4}) that 
\begin{equation}
    |F(s)|\leqslant B \left[\delta\left(\frac{s}{\delta}\right)^{q+1}+\frac{\delta^{q+1}}{q+1}\left(\frac{s}{\delta}\right)^{q+1}\right],\quad\mbox{ for }s\geqslant\delta,
    \label{ineq5}
\end{equation}
from which we obtain that 
\begin{equation}
    |F(s)| \leqslant \frac{B}{\delta^{q+1}}\left[\delta+\delta^{q+1}\right]s^{q+1},\quad\mbox{ for }s\geqslant\delta,
    \label{ineq6}
\end{equation}
where we have used the fact that $q+1>p>1$, in view of hypothesis $(H_1)$. Setting $K_1=K_1(\delta)=\frac{B}{\delta^{q+1}}\left[\delta+\delta^{q+1}\right]$, we see that (\ref{ineq2}) follows from (\ref{ineq6}). The case for $s\leqslant -\delta$ is analogous. Hence, the estimate (\ref{ineq2}) is valid  for all $|s|\geqslant\delta.$ 

Next, combine the estimates (\ref{ineq1}) and (\ref{ineq2}) to get 
\begin{equation}
    F(s)\leqslant\varepsilon|s|^p + K_1|s|^{q+1},\quad\mbox{ for }s\in\R.
    \label{ineq7}
\end{equation}
Then, it follows from (\ref{ineq7}) that
\begin{equation}
    \int_{\Omega}F(u)\,dx \leqslant \varepsilon\int_{\Omega}|u|^p\,dx + K_1\int_{\Omega}|u|^{q+1}\,dx;
    \nonumber 
\end{equation}
so that, using the Sobolev inequality \cite[Theorem $6.7$]{Nezza1}, it follows from the previous estimate that 
\begin{equation}
    \int_{\Omega}F(u)\,dx \leqslant C_3\left(\varepsilon+K_1\|u\|^{q+1-p}\right)\|u\|^p,
    \label{ineq9}
\end{equation}
for some positive constant $C_3$. 

Setting $\rho=\displaystyle\left(\frac{\varepsilon}{2K_1}\right)^{1/(q+1-p)}$, we obtain from (\ref{ineq9}) that 
\begin{equation}
    \|u\| < \rho \Rightarrow \int_{\Omega}F(u)\,dx \leqslant C_3\varepsilon \|u\|^p.
    \label{ineq10}
\end{equation}

Next, it follows from the definition of $J_\lambda$ in (\ref{funcdef1}), (\ref{ineq10}), and hypothesis $(H_3)$ that 
\begin{equation}
    J_\lambda (u) \geqslant  \left(\frac{1}{p}-C_3\lambda\varepsilon\right)\|u\|^{p} -   \frac{c_V}{p}\|u\|_{p}^{p}.
    \label{ineq11}
\end{equation}

On the other hand, it follows from  \cite[Section $3$]{LindLind1} that the first  eigenvalue $\lambda_1$ of $(-\Delta)_{p}^{s}$ is characterized by the minimization of the 
Rayleigh quotient: 
\begin{equation}
    \lambda_1 = \inf_{u\in X\backslash\{0\}} \frac{\|u\|^{p}}{\|u\|_p^{p}},
    \label{ineq12}
\end{equation}
with $\lambda_1\in (0,\infty)$.  
See Theorem 5 in \cite{LindLind1}.

Hence, applying (\ref{ineq12}) in (\ref{ineq11}), we get
\begin{equation}
       J_\lambda (u) \geqslant  \left[\frac{1}{p}\left(1- \frac{c_V}{\lambda_1}\right)-C_3\lambda\varepsilon\right]\|u\|^{p}.
    \label{ineq13}
\end{equation}
By virtue of hypothesis $(H_3)$, $c_{V}<\lambda_1$, thus we can choose $\varepsilon > 0$ such that 
\begin{equation}
   \varepsilon < \frac{1}{2pC_3\lambda}\left(1-\frac{c_V}{\lambda_1}\right).
    \label{epschoice}
\end{equation}
Then, by virtue of (\ref{epschoice}), we obtain from (\ref{ineq13}) that
\begin{equation}
    J_\lambda(u)\geqslant \frac{1}{2pC_3\lambda}\left(1-\frac{c_V}{\lambda_1}\right)\|u\|^{p} > J(0),\quad\mbox{ for }0<\|u\|<\rho,
    \nonumber 
\end{equation}
where $\rho>0$ is sufficiently small. Consequently, $u=0$ is a local minimum of $J_\lambda$ in $B_{\rho}(0)$. It follows from (\cite[Example $1$,page $33$]{KC} that 
\begin{equation}
    C_{k}(J_\lambda,0)\cong\delta_{k,0}\F,\quad\mbox{ for }k\in\Z.
    \nonumber 
\end{equation}

\end{proof}

\section{Proofs of the Main Results}\label{secproofmainres}

In this final section of the paper, we present proofs of the main results; namely the proofs the of Theorem \ref{maintheo}, Theorem \ref{maintheo2}, and Theorem \ref{maintheo33}. 

\subsection{Proof of Theorem \ref{maintheo}}
Assume, by a way of contradiction, that $\mathcal{K}=\{u_\lambda\}$ where $u_\lambda$ is the mountain-pass type solution found in Theorem \ref{maintheo1}. Then, it follows from Proposition \ref{propMMP} that
\begin{equation}
    C_{1}(J_\lambda,u_\lambda)\not\cong 0.
    \label{cmnt1}
\end{equation}
Since we are assuming that $\mathcal{K}=\{u_\lambda\}$, we can invoke Proposition \ref{bliprop} to get 
\begin{equation}
    C_{k}(J_\lambda,\infty)\cong C_{k}(J_\lambda,u_\lambda),\quad\mbox{ for all }k\in \Z.
    \label{cg1}
\end{equation}
In particular, if $k=1$ in (\ref{cg1}), we obtain from (\ref{cinftyg}) and (\ref{cmnt1}) that 
$$0\cong C_{1}(J_\lambda,\infty) \cong C_1(J_\lambda,u_\lambda)\not\cong 0,$$
which is a contradiction. 

Therefore, $J_\lambda$ must have at least two critical points and this concludes the proof of the theorem.

\subsection{Proof of Theorem \ref{maintheo2}}
  By virtue of   Theorem \ref{maintheo}, we obtain the existence of two solutions for problem (\ref{proba1}). Furthermore, one of them is of mountain pass type.
  
  Next, assume that $p\geqslant 2$ and $V(x)\geqslant 0$ for a.e $x\in\Omega$. For the case of $f(0)>0$, it follows from the Comparison Theorem \ref{theo56} that both solutions are positive. For the case of $f(0)<0$, Theorem \ref{theo56} leads us to the positivity of the mountain-pass type solution. 

\subsection{Proof of Theorem \ref{maintheo33}}
Assume, by a way of contradiction, that $\mathcal{K}=\{0,u_\lambda\}$, where $u_\lambda$ is the mountain-pass type solution found in Theorem \ref{maintheo1}.  Then, it follows from 
 \cite[Theorem $4.2$, page $35$]{KC} that 
\begin{equation}
    H_{k}(X,J_\lambda^{-M}) \cong C_{k}(J_\lambda,0)\oplus C_{k}(J_\lambda,u_\lambda),\quad\mbox{ for all }k\in\Z.
    \label{hkarg1}
\end{equation}
In particular, setting $k=1$ in (\ref{hkarg1}) and using (\ref{cmountaing1}), (\ref{cinftyg}), and (\ref{coriging}), we obtain 
\begin{equation}
       0 \cong C_{1}(J_\lambda,0)\oplus C_{1}(J_\lambda,u_\lambda) \cong 0\oplus  C_{1}(J_\lambda,u_\lambda)\not\cong 0,
    \nonumber
\end{equation}
which is a contradiction. Therefore, the critical set $\mathcal{K}$ must have at least three critical points. This concludes the proof of the theorem.

\section{Appendix}\label{appendix}
In this section, we prove that the problem
\begin{equation}  \left\{\begin{aligned} 
		(-\Delta)_p^su+V(x)\Phi_p(u) &=1, \quad \mbox{ in }\Omega;\\
		u&=0,   \quad\mbox{in $\R^N\backslash\Omega$},
	\end{aligned}\right.
 \label{pprob1}
 \end{equation}
has a positive weak solution. We will show that the associated energy functional to problem (\ref{pprob1}) is coercive and weakly lower semi-continuous. Then, the existence result follows by a result found in Evans \cite[Theorem $2$, Chapter $8$]{EV}. 

In fact, the associated functional to problem (\ref{pprob1}) is given by
\begin{equation}
    E(u):=\frac{1}{p}\|u\|_{s,p}^p+\frac{1}{p}\int_\Omega V (x)|u|^p\,dx - \int_\Omega u\,dx, \qquad u\in X.
    \label{deffu}
\end{equation}
To prove the coercivity of $E$, let  $(u_n)_n$ be a sequence in $X$ such that $\|u_n\|_{s,p}\to \infty$ as $n\to \infty$. 
 From \eqref{condd1} we have that $\|u_n\|_1 \leqslant C_1\|u_n\|_{s,p}$, for all $n$. Moreover, $\|u_n\|^p_p \leqslant \frac{1}{\lambda_1}\|u_n\|^p_{s,p}$, for all $n$. Therefore, applying these estimates and hypothesis $(H_3)$ into (\ref{deffu}) we get 
 \begin{equation}
 \begin{aligned}
     E(u_n) & \geqslant  \frac{1}{p}\|u_n\|_{s,p}^p-\frac{c_V}{p}\|u_n\|_p^p  - C_1\|u_n\|_{s,p}  \\
     &\geqslant  \frac{1}{p}\left( 1-\frac{c_V}{\lambda_1} \right)\|u_n\|_{s,p}^p - C_1\|u_n\|_{s,p},
     \end{aligned}
     \label{finaleq}
 \end{equation}
for all $n\in\N$. 

Since $1-\frac{c_V}{\lambda_1}>0$ and $p>1$, we obtain from (\ref{finaleq}) that $E(u_n)\to \infty$ as $n\to \infty$.
Now, $E$ is continuous due to its differentiability. Moreover, a simple computation shows that the functional $E$ is convex. Therefore, $E$ is weakly lower semicontinuous (see for example \cite[Theorem $1.5.3$]{Badiale}). This proves that problem \eqref{pprob1} has at least one solution $u\in X$, which is nontrivial. 

Finally, notice that $u$ is a weak supersolution of the problem $(-\Delta)_p^su+V(x)\Phi_p(u) = 0,$
  in $\Omega$,	with	$u=0$,  in $\R^N\backslash\Omega$. Thus, by virtue of Theorem \eqref{t4.2}, it follows that $u>0$.

\bibliographystyle{alpha}
\bibliography{RumbosRecova}

\end{document}